\newtheorem{Th}{Theorem}
\newtheorem{Co}{Corollary}
\newtheorem{Lm}{Lemma}
\newtheorem{Lma}{Lemma}[section]
\newtheorem{Dfi}{Definition}
\newtheorem{Rm}{Remark}
\newtheorem{Que}{Question}
\newcommand{\be}{\begin{equation}}
\newcommand{\ee}{\end{equation}}
\newcommand{\bes}{\begin{equation*}}
\newcommand{\ees}{\end{equation*}}
\newcommand{\R}{\mathbb{R}}
\newcommand{\N}{\mathbb{N}}
\newcommand{\C}{\mathbb{C}}
\newcommand{\Z}{\mathbb{Z}}
\newcommand\res{\mathop{\hbox{\vrule height 7pt width .5pt depth 0pt
\vrule height .5pt width 6pt depth 0pt}}\nolimits}
\newcommand{\reset}{\setcounter{equation}{0}\setcounter{Th}{0}\setcounter{Prop}{0}\setcounter{Co}{0}
\setcounter{Lm}{0}\setcounter{Rm}{0}}
\def\Xint#1{\mathchoice
{\XXint\displaystyle\textstyle{#1}}%
{\XXint\textstyle\scriptstyle{#1}}%
{\XXint\scriptstyle\scriptscriptstyle{#1}}%
{\XXint\scriptscriptstyle\scriptscriptstyle{#1}}%
\!\int}
\def\XXint#1#2#3{{\setbox0=\hbox{$#1{#2#3}{\int}$}
\vcenter{\hbox{$#2#3$}}\kern-.5\wd0}}
\def\dashint{\Xint-}
\def\La{\Lambda}
\def\La{\Lambda}
\def\ti{\tilde}
\def\lf{\left}
\def\rg{\right}
\def\la{\lambda}
\def\ds{\displaystyle}
\def\ov{\overline}
\def\Om{\Omega}
\def\om{\omega}
\def\p{\partial}
\def\res{\mathop{\hbox{\vrule height 7pt width .5pt 
depth 0pt\vrule height .5pt width 6pt depth 0pt}}\nolimits}
\newcommand{\tpitchfork}{%
  \vbox{
    \baselineskip\z@skip
    \lineskip-.52ex
    \lineskiplimit\maxdimen
    \m@th
    \ialign{##\crcr\hidewidth\smash{$-$}\hidewidth\crcr$\pitchfork$\crcr}
  }%
}
\begin{document}

\title{A Variational Construction of Hamiltonian Stationary Surfaces with Isolated Schoen--Wolfson Conical Singularities}

\author{Filippo Gaia, Gerard Orriols and Tristan Rivi\`ere\footnote{Department of Mathematics, ETH Zentrum,
CH-8093 Z\"urich, Switzerland.}}

\maketitle
{\bf Abstract :}{\it We construct using variational methods Hamiltonian Stationary Surfaces with Isolated Schoen--Wolfson Conical Singularities. We obtain these surfaces through a convergence process reminiscent to the Ginzburg--Landau asymptotic analysis in the strongly repulsive regime introduced by Bethuel, Brezis and H\'elein. We describe in particular how the prescription of Schoen--Wolfson conical singularities is related to optimal Wente constants.}

\noindent{\bf Math. Class. 53D12, 49Q05, 58E12, 49Q10, 35J50, 35J25, 35J65, 53C42}

\section{Introduction}
\label{sec:introduction}
The main object of study in the present work are \emph{Lagrangian stationary surfaces}. These surfaces are critical points of the area  among surfaces satisfying the Lagrangian constraint
\be
\label{0.1}
\iota_\Sigma^\ast\Om=0\ \quad\mbox{ where }\quad \Om= dx_1\wedge dx_2+dx_3\wedge dx_4,
\ee
and where $\iota_\Sigma$ is the canonical embedding of the surface $\Sigma$ into ${\C}^2$.

In a breakthrough paper \cite{SW} Richard Schoen and Jon Wolfson discovered that Lagrangian oriented surfaces $\Sigma$ in ${\C}^2$ which are {\it Lagrangian stationary} can have isolated point singularities at which the Gauss map\footnote{The Gauss map is the map which assigns at every point the oriented tangent 2-plane. This is a map from $\Sigma$ into the 3 dimensional Lagrangian Grassmanian $\La(2)\simeq U(2)/ O(2)$, sub-manifold  of the Grassmann manifold  $G_2({\R}^4)\simeq {\C}P^1\times {\C}P^1$ } is not continuous. This stands in contrast with classical stationary surfaces for the area (or minimal surfaces) in ${\C}^2$ which can degenerate only at  branched points at which the Gauss map is still continuous. These isolated point singularities discovered by Schoen and Wolfson are asymptotically given by cones parametrized by maps of the form 
\be
\label{0.2}
\Phi_{p,q}:D^2\to \mathbb{C}^2,\quad (r,\theta)\mapsto \frac{r^{\sqrt{pq}}}{\sqrt{p+q}} \begin{pmatrix}\sqrt{q}e^{ip\theta}
\\ i\sqrt{p}e^{-iq\theta}\end{pmatrix}.
\ee
called nowadays Schoen--Wolfson cones. A careful analysis of this expression shows that the pullback by the Gauss map, taking values into the Lagrangian Grassmann manifold $\Lambda(2)$, of the generator $H^1(\Lambda(2), {\Z})$ realizes a non trivial element in $H^1(D^2\setminus\{0\},{\Z})$ called \emph{Maslov index}\footnote{The Maslov Index of the cone~\ref{0.2} is $p-q$ (see the preliminaries).} (see for instance \cite{BaLe}). Some of these cones,  the ones for which $p-q=\pm 1$, are shown in \cite{SW} to be stable with respect to area variations under the pointwise Lagrangian constraint (\ref{0.1}). Later, J.~Wolfson in \cite{Wol}  constructed minimizers in some spherical Lagrangian homology class of a Ricci flat complex surface which admits minimizers with ``co-existing'' isolated Schoen-Wolfson conical singularities of degree $\pm1$. The question which has motivated the present work is, a Lagrangian  stationary surface in ${\C}^2$ being given, 
\[
\mbox{\it is there any restriction on the location of the Schoen--Wolfson conical singularities?}
\]
We are not able at this stage to give a satisfying answer to this question but we will turn it differently by constructing surfaces (mostly discs) with isolated Schoen--Wolfson conical singularities for which the location will be strongly correlated to the boundary data.

\medskip

Precisely a map $\Phi$ is a conformal Lagrangian immersion of the disc $D^2$ in ${\C}^2$ if and only if $\Phi$ is conformal and the multiplication by $i$ realizes an isometry from the tangent plane to the normal plane of the immersion. A direct computation (see the preliminaries) gives that this implies  the existence of an $S^1$ valued map $g$ such that
\be
\label{0.3}
\Phi^\ast (dz\wedge dw)= {g}^{-1}\ dvol_{\Phi}\quad\mbox{in }\ D^2\ .
\ee
A structural equation for the immersion to be conformal Lagrangian is
\be
\label{0.4}
\mbox{ div}(g\,\nabla \Phi)=0\quad\mbox{in }\ D^2\ .
\ee
while the condition for being a critical point of the area (for variations up to the boundary) is
\be
\label{0.5}
\lf\{
\begin{array}{l}
\ds \operatorname{div}\lf[ g^{-1}\,\nabla g\rg]=0\quad\mbox{in }D^2\\[5mm]
\ds g^{-1}\,\p_{r}g=0\quad\mbox{ in }\p D^2\ .
\end{array}
\rg.
\ee
The coupling of the structural equation (\ref{0.4}) and the Euler-Lagrange Equation (\ref{0.5}) is called {\it Hamiltonian stationary Equation}. Area variations under Lagrangian constraints
were first consider by Yong-Geun Oh in \cite{Oh1}. 

Our main result is the following.
\begin{Th}
\label{th-0.1}
Let $(d_l)_{l=1\cdots N}$ such that $d_l=\pm1$. Let $p_l$ be $N$ distinct points in $D^2$ and let $G$ be the Green's function solution to 
\be
\label{0.6}
\lf\{
\begin{array}{rl}
\ds\Delta G&\ds=2\pi\ \sum_{l=1}^N\ d_l\, \delta_{p_l}\quad\mbox{in }{\mathcal D}'(D^2)\\[5mm]
\ds G&\ds=0\quad\mbox{ on }\quad\p D^2\ .
\end{array}
\rg.
\ee
Assume each of the connected components of $G\ne 0$ is a disc containing exactly one $p_l$. Let $\psi\in C^1(\p D^2,{\C})$. Then there exists a solution $$(\Phi,g)\in \bigcap_{p<2}W^{1,p}(D^2,{\C}^2)\times W^{1,p}(D^2, S^1)$$ smooth away from the $p_l$ of the {\it Hamiltonian Stationary Equation}
\be
\label{0.7}
\lf\{
\begin{array}{l}
\mbox{ div}(g\,\nabla \Phi)=0\quad\mbox{in }\ D^2\ \\[5mm]
\Phi=\psi \quad\mbox{ in }\p D^2\\[5mm] 
\ds \operatorname{div}\lf[ g^{-1}\,\nabla g\rg]=0\quad\mbox{in }D^2\\[5mm]
\ds g^{-1}\,\p_{r}g=0\quad\mbox{ in }\p D^2\ .
\end{array}
\rg.
\ee
with 
\[
g^{-1}\nabla g=i\,\nabla^\perp G\ .
\] 
where $\nabla^\perp G:=(-\p_{x_2}G,\p_{x_1}G)$. Moreover $\Phi$ is conformal on $D^2$ and on each connected component $\om$ of $G\ne 0$ and there exists $\ti{\Phi}\in W^{1,2}(\om,{\C}^2)$ and $A^\om\in {\C}$ such that
\be
\label{0.8}
\Phi=\ti{\Phi}+A^\om\, g\ .
\ee
This solution $(\Phi,g)$ is unique in this class moreover $\ti{\Phi}\in C^{1,\sqrt{2}-1}(D^2)$. There exists a sub-space of co-dimension at most $N$ of $\psi\in H^{1/2}(\p D^2,{\C})$ such that $\Phi$ has finite area and realize an Hamiltonian Stationary Conformal Immersion with isolated Schoen--Wolfson conical Singularities of Maslov degree $d_l$ at the $p_l$.
\hfill $\Box$
\end{Th}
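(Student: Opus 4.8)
The plan is to treat \eqref{0.7} as a \emph{linear} problem once $g$ has been fixed by the data, and then to reduce everything to a sharp study of one critically singular elliptic operator near each $p_l$. First I would construct $g$ directly from $G$. Since $\nabla^\perp G$ is divergence free, its (multivalued) primitive $\phi$, defined by $\nabla\phi=\nabla^\perp G$, increases around each $p_l$ by $\oint\nabla^\perp G\cdot d\vec\ell=\int\Delta G=2\pi d_l$; because $d_l\in\Z$ this makes $g:=e^{i\phi}$ single valued. One checks $g\in\bigcap_{p<2}W^{1,p}(D^2,S^1)$ (as $\nabla G\sim 1/r$ at $p_l$), $g^{-1}\nabla g=i\nabla^\perp G$, $\operatorname{div}(g^{-1}\nabla g)=\operatorname{div}(i\nabla^\perp G)=0$, and $g^{-1}\partial_r g=-i\,\partial_\tau G=0$ on $\partial D^2$ since $G|_{\partial D^2}=0$. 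Thus the last two lines of \eqref{0.7} hold identically and only $\Phi$ remains.

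Writing $\operatorname{div}(g\nabla\Phi)=g\,(\Delta\Phi+i\,\nabla^\perp G\cdot\nabla\Phi)$, the structural equation becomes $\Delta\Phi=-i\,\nabla^\perp G\cdot\nabla\Phi$, a Laplacian perturbed by the \emph{divergence free} drift $\nabla^\perp G$. This divergence free structure is exactly what makes the Dirichlet problem with datum $\psi$ well posed, through the formal identity $\int\nabla^\perp G\cdot\nabla\Phi\,\overline{\Phi}=\tfrac12\int\nabla^\perp G\cdot\nabla|\Phi|^2=0$. The difficulty, and the main obstacle, is that $\nabla^\perp G\sim 1/r$ lies in $L^p$ only for $p<2$, so this energy identity picks up boundary contributions at the $p_l$ and the natural $W^{1,2}$ bound fails: the solution lives in $\bigcap_{p<2}W^{1,p}$ but not in $W^{1,2}$. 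I would therefore obtain $\Phi$ by approximation (excising small discs around the $p_l$, or regularising the drift, and passing to the limit), and prove uniqueness in this class, the point being that $\bigcap_{p<2}W^{1,p}$ excludes the too singular homogeneous modes and so selects a single solution. Away from the $p_l$ the coefficient $g$ is smooth and elliptic, whence $\Phi$ is smooth there.

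The local behaviour at $p_l$ I would read off from the model operator $\operatorname{div}(e^{id_l\theta}\nabla\,\cdot\,)$, legitimate because each component $\om$ of $\{G\neq0\}$ is a disc carrying a single $p_l$ where $g\sim e^{id_l\theta}$. The ansatz $f(r)\,e^{ik\theta}$ yields the Euler equation $r^2f''+rf'-k(k+d_l)f=0$ with indicial exponents $\pm\sqrt{k(k+d_l)}$. For $d_l=\pm1$ the first positive exponent above the constant mode is $\sqrt2$, which is precisely the Schoen--Wolfson homogeneity $\sqrt{pq}$ of the $(p,q)=(2,1)$ cone in \eqref{0.2}; hence the leading nonconstant part of $\Phi$ scales like $r^{\sqrt2}$, its gradient like $r^{\sqrt2-1}$, giving $\tilde{\Phi}\in C^{1,\sqrt2-1}$. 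The unique bounded homogeneous solution failing to lie in $W^{1,2}$ is $g^{-1}$ (it solves the equation because $g\,\nabla(g^{-1})=-i\nabla^\perp G$ is divergence free, yet its gradient is $\sim1/r$); absorbing the corresponding multiple of this bounded mode is exactly the splitting \eqref{0.8}, with $\tilde{\Phi}\in W^{1,2}(\om)$, which matched asymptotics then upgrade to $C^{1,\sqrt2-1}$.

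It remains to prove conformality and to isolate the good boundary data. Conformality on $D^2$ and on each $\om$ I would establish from the structural equation by a Hopf-differential argument: using \eqref{0.4} together with the Lagrangian constraint, a suitably weighted version of $\sum_j(\partial_z\Phi_j)^2$ is holomorphic away from the $p_l$, and the regularity of $\psi$ on $\partial D^2$ combined with the admissible singular behaviour at the $p_l$ forces it to vanish. Finally, the sole obstruction to $\Phi\in W^{1,2}$, hence to finite area, is the finite family of coefficients $A^\om\in\C$, one per $p_l$; since $\psi\mapsto\Phi\mapsto A^\om$ is complex linear, the locus $\{A^\om=0\ \forall\om\}$ is a subspace of $H^{1/2}(\partial D^2,\C)$ of codimension at most $N$, and on it the leading term at each $p_l$ is exactly the $r^{\sqrt2}$ Schoen--Wolfson cone, so $\Phi$ is a Hamiltonian stationary conformal immersion with the prescribed conical singularities of Maslov degree $d_l$. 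I expect the crux to be the combination of the singular linear theory and the matched asymptotics: controlling an operator whose drift sits exactly at the threshold where the $W^{1,2}$ energy estimate breaks down, and extracting from it the precise $\tilde{\Phi}+A^\om g$ structure.
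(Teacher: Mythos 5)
Your construction of $g$, the indicial analysis at the $p_l$ (exponents $\pm\sqrt{k(k+d_l)}$, the mode $g^{-1}$ as the unique bounded homogeneous solution outside $W^{1,2}$, the splitting \eqref{0.8}, the $C^{1,\sqrt2-1}$ regularity) and the linearity argument giving codimension at most $N$ all agree with the paper. But the existence part of your proposal rests on a misidentified energy identity, and that is exactly where the difficulty of the theorem lies. The identity you invoke, $\mathrm{Re}\int\nabla^\perp G\cdot\nabla\Phi\,\overline{\Phi}=\tfrac12\int\nabla^\perp G\cdot\nabla|\Phi|^2$, concerns the drift $\nabla^\perp G\cdot\nabla\Phi$ \emph{without} the factor $i$; the actual equation is $\Delta \Phi=-i\,\nabla^\perp G\cdot\nabla \Phi$, and pairing this drift with $\overline{\Phi}$ gives, after integration by parts,
\bes
\mathrm{Re}\int_{D^2}i\,\nabla^\perp G\cdot\nabla \Phi\ \overline{\Phi}\ dx^2\ =\ \pm\int_{D^2}G\,\langle i\,\nabla \Phi,\nabla^\perp \Phi\rangle\ dx^2,
\ees
exactly as in \eqref{I.47}--\eqref{I.48}. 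This Jacobian term does not vanish; by the optimal Wente inequality (Lemma \ref{lma-1}, after Topping) it can be as negative as $-\int_{D^2}|\nabla \Phi|^2$, with nontrivial equality cases, so the operator sits precisely at the threshold where coercivity fails: $L_1$ is convex but not coercive, and $\|\nabla u_t\|_{L^2}^2\sim\Theta_g(\psi)/\sqrt{1-t}$ blows up, cf.\ \eqref{0.14}. In particular the failure is structural, in the bulk, not a matter of ``boundary contributions at the $p_l$''. Consequently your scheme --- excise discs around the $p_l$ or mollify the drift and pass to the limit --- has no source of uniform $W^{1,p}$ bounds at all; since the limit genuinely contains the infinite-energy mode $A^\om g^{-1}$, any limiting argument must track that mode explicitly. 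This is what the paper's machinery replaces your identity with: the relaxed functionals $L_t$, $t<1$, made coercive by the sharp Wente constant; the decomposition $u_t=\tilde u_t+A^{\om,t}_{-1}g^{-1}e^{-\sqrt{1-t}\,|G|}$ on each component; the compactness Lemma \ref{lm:main-lemma-disk} for normalized blow-up sequences; and the Wente equality-case analysis, which is also what the uniqueness proof (Lemma \ref{lma-unique}) actually uses --- excluding singular Fourier modes by the $\bigcap_{p<2}W^{1,p}$ hypothesis is not by itself a proof of uniqueness.

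A second, independent gap is conformality. The datum $\psi$ is $\C$-valued: it prescribes only the first component $u$ of $\Phi$, and you never construct the second one. If instead you impose Dirichlet data on both components of a $\C^2$-valued map, solutions of $\operatorname{div}(g\nabla\Phi)=0$ are in general not conformal, and no Hopf-differential argument can force them to be: the equation is linear in $\Phi$, so conformality cannot hold for generic data, and there is no variational structure for your $\Phi$ from which holomorphy of a Hopf differential would follow. The paper obtains conformality structurally: since $\operatorname{div}(g\nabla u)=0$ on the simply connected disc, there exists $v$ with $g\nabla u=\nabla^\perp\overline v$, and $\Phi:=(u,\overline v)$ is then \emph{automatically} conformal because multiplication by the unimodular $g$ is an isometry of ${\R}^2$ (Corollary \ref{co-I.0}). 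This conjugate construction, which is also what produces the Lagrangian and Hamiltonian stationary structure of \eqref{0.7}, is missing from your proposal.
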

\begin{Rm}
\label{rm-0.2} In the cases $N=1,2$ we are able to prove that the subspace of $\psi\in H^{1/2}(\p D^2,{\C})$ such that $\Phi$ has finite area has exactly co-dimension 2.\hfill $\Box$
\end{Rm}

\medskip

Without Lagrangian constraint, the most natural and direct way to produce conformally parametrized area stationary map from a disc to ${\C}^2$ goes as follows. One first solve
\be
\label{0.9}
\lf\{
\begin{array}{l}
\ds\Delta u=0\quad\mbox{ in }D^2\\[5mm]
\ds u=\psi\quad\mbox{ on }\p D^2
\end{array}
\rg.
\ee
(where $u$ and $v$ take values in $\mathbb{C}$) then one introduce the harmonic conjugate $v$ of $u$ satisfying $\nabla^\perp v=\nabla u$ and we observe that
\[
\Phi:=\lf(\begin{array}{c} u\\[3mm] \ov{v}\end{array}\rg)
\]
is harmonic conformal and hence realizes a conformal minimal immersion of the disc $D^2$. For proving the theorem~\ref{th-0.1} we extend this classical approach to the hamiltonian framework that is, we first solve
\be
\label{0.10}
\lf\{
\begin{array}{l}
\ds\operatorname{div}\lf( g\, \nabla u\rg)=0\quad\mbox{ in }D^2\\[5mm]
\ds u=\psi\quad\mbox{ on }\p D^2
\end{array}
\rg.
\ee
then we introduce the harmonic conjugate $v$ of $u$ satisfying $\nabla^\perp v=g\,\nabla u$ and we realizes that
\[
\Phi:=\lf(\begin{array}{c} u\\[3mm] \ov{v}\end{array}\rg)
\]
is a solution to our problem\footnote{Observe that following the approach in lemma III.4 of \cite{DR}, introducing the corresponding quaternion
\[
{\mathbf \Phi}= u+\ov{v}\,{\mathbf j}=u+{\mathbf j}\,{v}
\]
 where the complex number is identified withe the imaginary quaternion ${\mathbf i}$, we obtain that $\mathbf \Phi$ solves the {\it Quaternionic Cauchy Riemann} type equation
 \[
 \p_x {\mathbf \Phi}-g\,{\mathbf j}\,\p_{y} {\mathbf \Phi}=0\ .
 \]}. All the difficulty amounts in solving (\ref{0.10}). This equation happens to be variational and are the critical points of 
\be
\label{0.11}
 L(u):=\frac{1}{2}\int_{D^2}|\nabla u|^2\ dx^2+\frac{1}{2}\int_{D^2}G\, \lf<i\,\nabla u,\nabla^\perp u\rg>\ dx^2\ .
\ee
A natural approach would consist in minimizing $L$ but, under the assumptions on $G$ we are making, the functional $L$ is convex but not coercive thanks to the 
optimal constant in Wente inequality (see lemma~\ref{lma-1} in the Appendix). We introduce instead the perturbed functional
\be
\label{0.12}
L_t(u):=\frac{1}{2}\int_{D^2}|\nabla u|^2\ dx^2+\frac{t}{2}\int_{D^2}G\, \lf<i\,\nabla u,\nabla^\perp u\rg>\ dx^2\ .
\ee
for $t<1$. This functional is also convex and it is easy to obtain a unique minimizer $u_t$ of 
\be
\label{0.13}
E_t(\psi):=\inf\lf\{
\begin{array}{c}
\ds L_t(u):=\frac{1}{2}\int_{D^2}|\nabla u|^2\ dx^2+\frac{t}{2}\int_{D^2}G\, \lf<i\,\nabla u,\nabla^\perp u\rg>\ dx^2\\[5mm]
\ds u\in W^{1,2}(D^2,{\C})\quad\mbox{ and }\quad u=\psi\quad\mbox{ on }\p D^2
 \end{array} \rg\}.
\ee
The idea consists then in making $t\rightarrow 1$ but as in the Ginzburg-Landau type asymptotic analysis of F.~Bethuel, H.~Brezis and F.~H\'elein \cite{BBH} we are facing the problem of a lack of a-priori estimate since there exists $\Theta_g(\psi)$ such that
\be
\label{0.14}
\int_{D^2}|\nabla u_t|^2\ dx^2=\frac{\Theta_g(\psi)}{\sqrt{1-t}}+O(1)\,\longrightarrow \,+\infty\quad\mbox{ if }\Theta_g(\psi)\ne 0
\ee
In the case of one point, $N=1$, we have an explicit expression of $\Theta_g(\psi)$:
\be
\label{0.15}
\Theta_g(\psi)=\pi\,\lf|\int_{\p D^2}\psi\, dg\rg|^2.
\ee
In a similar way as in \cite{BBH} and subsequent versions of the proof of their main theorem, in particular the one in \cite{ST}, we prove that the blow-up of the $L^2$-norm of $\nabla u_t$ is in fact ``hiding'' a control of $\nabla u_t$ in the slightly bigger Marcinkiewicz space $L^{2,\infty}(D^2)$.

\medskip

\subsection{Open Problems}
\begin{itemize}
\item Can Theorem \ref{th-0.1} be generalised to construct Hamiltonian stationary Lagrangian discs with prescribed Schoen--Wolfson singularities in Calabi-Yau surfaces?
\item It would be interesting to investigate the same question for closed surfaces: can one construct closed Hamiltonian stationary surfaces (in ${\C}^2$, in Calabi-Yau manifolds or even in K\"ahler--Einstein manifolds) with isolated Schoen--Wolfson singularities? Closed Hamiltonian stationary surfaces have been constructed, among others, by I.~Castro and F.~Urbano \cite{CU}, F.~H\'elein and P.~Romon \cite{HR-cag, HR-cmh}, H.~Anciaux \cite{An}, but in all cases they construct branched immersions without cone singularities.
\item Does there exist complete Hamiltonian Stationary planes in ${\C}^2$ with  isolated singularities beside the Schoen--Wolfson cones (\ref{0.2})? If yes, are they stable 
with respect to Hamiltonian deformations?
\item Is the sufficient condition on $G$ given by theorem~\ref{th-0.1} necessary for the existence of finite area Hamiltonian Stationary discs in ${\C}^2$ with $\pm 1$ Schoen--Wolfson cones? Is there some notion of renormalized energy in the style of the Ginzburg--Landau renormalized energy? Can one compute  explicitly the function $\Theta_g$ for $N>2$?
\item Is there a counterpart to theorem~\ref{th-0.1} for higher multiplicities ($|d_l|>1$)?
\item Is it possible to construct Hamiltonian stationary Lagrangian discs with locally finite area but infinitely many Schoen--Wolson singularities accumulating at the boundary?
\end{itemize}

\section{Preliminaries}
\reset
\label{sec:preliminaries}
\subsection{The Hamiltonian Stationary Equation}

Let $\Phi=(\Phi_1,\Phi_2)$ be a conformal Lagrangian immersion from $D^2$ into ${\C}^2$, that is we have
\be
\label{p-1}
\lf\{
\begin{array}{l}
|\p_{x_1}\Phi|^2=|\p_{x_2}\Phi|^2=e^{2\la}\\[5mm]
\p_{x_1}\Phi\cdot\p_{x_2}\Phi=0
\end{array}
\rg.
\ee
and
\be
\label{p-2}
0=2\,\Phi^\ast\Om=-\,d\Phi\,\dot{\wedge}\,id\Phi=d\Phi_1\wedge i\,d\ov{\Phi}_1+d\Phi_2\wedge i\,d\ov{\Phi}_2
\ee
where
\be
\label{p-3}
\Om=dx_1\wedge dx_2+dx_3\wedge dx_4= \frac{1}{2} \lf[dz_1\wedge i\,d\ov{z}_1+dz_2\wedge i\,d\ov{z}_2\rg]
\ee
\subsubsection{The Lagrangian Angle Function}
We have
\be
\label{p-4}
\begin{array}{l}
\ds0=2\ \Phi^\ast\Om=-\,d\Phi\,\dot{\wedge}\,id\Phi\\[5mm]
\ds=\lf<\p_{x_1} \Phi_1\,,\,\p_{x_2}i \Phi_1\rg>-\lf<\p_{x_2} \Phi_1\,,\,\p_{x_1}i \Phi_1\rg>+\lf<\p_{x_1} \Phi_2\,,\,\p_{x_2}i \Phi_2\rg>-\lf<\p_{x_2} \Phi_2\,,\,\p_{x_1}i \Phi_2\rg>\ dx_1\wedge dx_2\\[5mm]
\ds=-\,i\ \p_{x_1} \Phi_1\,\p_{x_2}\ov{\Phi}_1\ +\,i\,\p_{x_1} \ov{\Phi}_1\,\p_{x_2}{\Phi_1}-\,i\ \p_{x_1} \Phi_2\,\p_{x_2}\ov{\Phi}_2\ +\,i\,\p_{x_1} \ov{\Phi}_2\,\p_{x_2}{\Phi_2}\ dx_1\wedge dx_2
\end{array}
\ee
Hence
\be
\label{V.15}
\p_{x_1} \Phi_1\,\p_{x_2}\ov{\Phi_1}\ -\,\p_{x_1} \ov{\Phi_1}\,\p_{x_2}{\Phi_1}+\ \p_{x_1} \Phi_2\,\p_{x_2}\ov{\Phi_2}\ -\p_{x_1} \ov{\Phi_2}\,\p_{x_2}{\Phi_2}=0
\ee
Observe that, since $\Phi$ is conformal we have also
\be
\label{V.16}
\p_{x_1}\Phi_1\,\p_{x_2}\ov{\Phi}_1+\p_{x_1}\ov{\Phi}_1\,\p_{x_2}{\Phi}_1+\p_{x_1}\Phi_2\,\p_{x_2}\ov{\Phi}_2+\p_{x_1}\ov{\Phi}_2\,\p_{x_2}{\Phi}_2=0
\ee
Combining (\ref{V.15}) and (\ref{V.16}) gives then
\be
\label{V.17}
\p_{x_1}\Phi_1\,\p_{x_2}\ov{\Phi}_1+\p_{x_1}\Phi_2\,\p_{x_2}\ov{\Phi}_2=0\ .
\ee
We compute (without using the Lagrangian condition (\ref{V.17}))
\be
\label{V.18}
\begin{array}{l}
\ds\lf|\p_{x_1}\Phi_1\,\p_{x_2}\Phi_2-\p_{x_2}\Phi_1\,\p_{x_1}\Phi_2\rg|^2\\[5mm]
\ds=\lf|\p_{x_1}\Phi_1\rg|^2\,\lf|\p_{x_2}\Phi_2\rg|^2+\lf|\p_{x_1}\Phi_2\rg|^2\,\lf|\p_{x_2}\Phi_1\rg|^2-2\, \lf<\p_{x_1}\Phi_1\,\p_{x_2}\Phi_2,\p_{x_2}\Phi_1\,\p_{x_1}\Phi_2\rg>
\end{array}
\ee
We write now using (\ref{V.17})
\be
\label{V.19}
\begin{array}{l}
\ds-\,2\, \lf<\p_{x_1}\Phi_1\,\p_{x_2}\Phi_2,\p_{x_2}\Phi_1\,\p_{x_1}\Phi_2\rg>=-\p_{x_1}\Phi_1\,\p_{x_2}\ov{\Phi}_1\,\p_{x_2}\Phi_2\,\p_{x_1}\ov{\Phi}_2-\p_{x_2}\Phi_1\,\p_{x_1}\ov{\Phi}_1\,\p_{x_1}\Phi_2\,\p_{x_2}\ov{\Phi}_2\\[5mm]
\ds=|\p_{x_1} \Phi_2|^2\,|\p_{x_2}\Phi_2|^2+|\p_{x_1} \Phi_1|^2\,|\p_{x_2}\Phi_1|^2=2\, |\p_{x_1} \Phi_2|^2\,|\p_{x_2}\Phi_2|^2
\end{array}
\ee
Combining (\ref{V.18}) and (\ref{V.19}) gives
\be
\label{V.20}
\begin{array}{l}
\ds\lf|\p_{x_1}\Phi_1\,\p_{x_2}\Phi_2-\p_{x_2}\Phi_1\,\p_{x_1}\Phi_2\rg|^2\\[5mm]
\ds=|\p_{x_2}\Phi_2|^2\,\lf[\lf|\p_{x_1}\Phi_1\rg|^2+|\p_{x_1} \Phi_2|^2\rg]+|\p_{x_1}\Phi_2|^2\,\lf[\lf|\p_{x_2}\Phi_1\rg|^2+|\p_{x_2} \Phi_2|^2\rg]=e^{4\,\la}
\end{array}
\ee
Observe
 \be
\label{p-6}
\Phi^\ast(dz_1\wedge dz_2)=\p_{x_1}\Phi_1\,\p_{x_2}\Phi_2-\p_{x_2}\Phi_1\,\p_{x_1}\Phi_2\ dx_1\wedge dx_2\, \ .
\ee
Then we have proved the following lemma
\begin{Lm}
\label{lm-angle}
Let $\Phi$ be a conformal Lagrangian immersion of $D^2$ into ${\C}^2$ then there exists an $S^1$  valued map $g\ :\ D^2\rightarrow S^1$ such that
\be
\label{p-7}
\Phi^\ast(dz_1\wedge dz_2)=g^{-1}\ dvol_{\Phi}\ \quad\mbox{ on }D^2
\ee
where $dvol_\Phi$ is the volume form on $D^2$ induced by the immersion. The map $g^{-1}$ is called Lagrangian angle.\hfill $\Box$
\end{Lm}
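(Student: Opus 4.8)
The plan is to realise $g^{-1}$ as the unimodular phase of the complex Jacobian of $\Phi$, reading off the two ingredients already assembled above. First I would record, from (\ref{p-6}), that
\[
\Phi^\ast(dz_1\wedge dz_2)=J\,dx_1\wedge dx_2,\qquad J:=\px\Phi_1\,\py\Phi_2-\py\Phi_1\,\px\Phi_2\in\C,
\]
so that $J$ is the complex Jacobian determinant of $\Phi:D^2\to\C^2$. The conformality hypothesis (\ref{p-1}) gives immediately that the induced volume form is $dvol_\Phi=e^{2\la}\,dx_1\wedge dx_2$, where $e^{2\la}=|\px\Phi|^2$ is strictly positive everywhere precisely because $\Phi$ is an immersion.

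The substance of the statement is then contained in the identity (\ref{V.20}), which I would invoke to obtain $|J|^2=e^{4\la}$, that is $|J|=e^{2\la}>0$. Since $J$ therefore never vanishes, the ratio
\[
g^{-1}:=\frac{J}{|J|}=\frac{J}{e^{2\la}}
\]
is a well-defined map $D^2\to S^1$, as regular as $\Phi$ itself. Substituting back gives
\[
\Phi^\ast(dz_1\wedge dz_2)=J\,dx_1\wedge dx_2=\frac{J}{e^{2\la}}\,e^{2\la}\,dx_1\wedge dx_2=g^{-1}\,dvol_\Phi,
\]
which is exactly (\ref{p-7}).

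There is no genuine analytic obstacle here: the lemma is a purely algebraic repackaging of the conformality relations together with the single scalar Lagrangian constraint. The one point deserving emphasis is where that constraint enters, namely in the passage from (\ref{V.18}) to (\ref{V.20}): the Lagrangian identity (\ref{V.17}) is what converts the cross term $-2\,\lf<\px\Phi_1\,\py\Phi_2,\py\Phi_1\,\px\Phi_2\rg>$ into a sum of squares, and without it one could not conclude the equality $|J|=e^{2\la}$ that makes $g^{-1}$ land exactly on $S^1$. Thus the only real \textbf{work} is the bookkeeping identity (\ref{V.20}), which is already established, and the rest is the one-line normalisation displayed above.
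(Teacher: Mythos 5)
Your proposal is correct and follows essentially the same route as the paper: the paper's proof is precisely the chain (\ref{V.15})--(\ref{V.20}) together with (\ref{p-6}), establishing $|J|^2=e^{4\lambda}$ via the Lagrangian identity (\ref{V.17}), after which the lemma is the one-line normalisation $g^{-1}:=J/e^{2\lambda}$ that you spell out. You also correctly identify the passage from (\ref{V.18}) to (\ref{V.20}) as the only place the Lagrangian constraint is used.
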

\begin{Rm}
\label{rm-angle} Observe that the map
\be
\label{p-7-b}
L\ :\ x\in D^2\ \longrightarrow\  e^{-\la}\,\lf(\begin{array}{cc} \p_{x_1}\ov{\Phi}_1&\p_{x_2}\ov{\Phi}_1\\[3mm] \p_{x_1}\ov{\Phi}_2&\p_{x_2}\ov{\Phi}_2\end{array}\rg)
\ee
defines a map into $U(2)$ whose determinant giving the Maslov index is equal to $g$. \hfill $\Box$
\end{Rm}
\subsubsection{The Lagrangian Angles of the Schoen Wolfson Cones}
We consider the immersion $\Phi_{pq}$ of a Schoen-Wolfson cone of the form (\ref{0.2}). First we observe
\be
\label{p-8}
\lf\{
\begin{array}{l}
\ds |\p_r\Phi_{pq}|^2=\frac{p\,q}{p+q}\, r^{2\sqrt{pq}-2}\,(p+q)=p\,q\, r^{2\sqrt{pq}-2}\\[5mm]
\ds r^{-2}\,|\p_\theta\Phi_{pq}|^2=\frac{r^{2\sqrt{pq}-2}}{p+q}\, (q\,p^2+p\,q^2)=p\,q\, r^{2\sqrt{pq}-2}\\[5mm]
\ds \p_r\Phi_{pq}\cdot \p_\theta\Phi_{pq}=0
\end{array}
\rg.
\ee
Hence the parametrization $\Phi_{pq}$ is conformal. We have
\be
\label{p-9}
\begin{array}{l}
\ds \Phi_{pq}^\ast(dz_1\wedge dz_2)=\frac{i\,\sqrt{p\,q}}{p+q} d\lf(r^{\sqrt{pq}} e^{i\,p\,\theta}\rg)\wedge d\lf(r^{\sqrt{pq}} e^{-\,i\,q\,\theta}\rg)\\[5mm]
\ds=\frac{\sqrt{p\,q}}{p+q}\ q\ \sqrt{pq}\ r^{2\,\sqrt{pq}-2} \ e^{i(p-q)\theta}\, r\, dr\wedge d\theta-\frac{\sqrt{p\,q}}{p+q}\ p\ \sqrt{pq}\   e^{i(p-q)\theta}\,r^{2\,\sqrt{pq}-2} \ r\,d\theta\wedge dr\\[5mm]
\ds=e^{i(p-q)\theta}\, p\,q\, r^{2\sqrt{pq}-2}\, r\, dr\wedge d\theta=e^{i(p-q)\theta}\ dvol_{ \Phi_{pq}}
\end{array}
\ee
Hence the {\it Lagrangian Angle Function} of the Schoen Wolfson cone is
\be
\label{p-10}
g^{-1}:=e^{i(p-q)\theta}\ .
\ee

\subsubsection{The Mean Curvature Vector of a Lagrangian Immersion}
 Let $\Phi=(\Phi_1,\Phi_2)$ be a conformal Lagrangian immersion from $D^2$ into ${\C}^2$ that is $\Phi$ satisfies (\ref{p-1}). The mean curvature vector of $\vec{H}_\Phi$ is given by
 \be
 \label{p-11}
 \vec{H}_\Phi:=\frac{e^{-2\la}}{2}\,\Delta\Phi
 \ee
The mean curvature vector is normal to the tangent space to $\Phi$ generated by $\p_{x_1}\Phi$ and $\p_{x_2}\Phi$. Since $\Phi$ is assumed to be Lagrangian, the multiplication by $i$
realizes a isometry from the tangent space to $\Phi$ and the normal space. Hence there exists $a,b\in {\R}$ such that
\be
\label{p-12}
\frac{e^{-2\la}}{2}\,\Delta\Phi=i\,a\, \p_{x_1}\Phi+i\,b\,\p_{x_2}\Phi\ .
\ee
We aim at computing $a$ and $b$. We have
\be
\label{p-13}
\lf\{
\begin{array}{l}
\ds\p_{x_1}g\, \p_{x_1}\Phi=\p_{x_1}\lf(e^{-2\la}\, \lf(\p_{x_1}\ov{\Phi}_1\,\p_{x_2}\ov{\Phi}_2-\p_{x_2}\ov{\Phi}_1\,\p_{x_1}\ov{\Phi}_2\rg)\, \p_{x_1}\Phi \rg)- g\,\p^2_{x_1^2}\Phi\\[5mm]
\ds\p_{x_2}g\, \p_{x_2}\Phi=\p_{x_2}\lf(e^{-2\la}\, \lf(\p_{x_1}\ov{\Phi}_1\,\p_{x_2}\ov{\Phi}_2-\p_{x_2}\ov{\Phi}_1\,\p_{x_1}\ov{\Phi}_2\rg)\, \p_{x_2}\Phi \rg)- g\,\p^2_{x_2^2}\Phi
\end{array}
\rg.
\ee
This gives for $j=1,2$
\be
\label{p-14}
\lf\{
\begin{array}{l}
\ds\p_{x_1}g\, \p_{x_1}\Phi_1=\p_{x_1}\lf(e^{-2\la}\, \lf(|\p_{x_1}\ov{\Phi}_1|^2\,\p_{x_2}\ov{\Phi}_2-\p_{x_2}\ov{\Phi}_1\,\p_{x_1}\Phi_1\,\p_{x_1}\ov{\Phi}_2\rg)\rg)- g\,\p^2_{x_1^2}\Phi_1\\[5mm]
\ds\p_{x_1}g\, \p_{x_1}\Phi_2=\p_{x_1}\lf(e^{-2\la}\, \lf(\p_{x_1}\ov{\Phi}_1\,\p_{x_2}\ov{\Phi}_2\,\p_{x_1}\Phi_2 -\p_{x_2}\ov{\Phi}_1\,|\p_{x_1}\ov{\Phi}_2|^2\rg)\rg)- g\,\p^2_{x_1^2}\Phi_2\\[5mm]
\ds\p_{x_2}g\, \p_{x_2}\Phi_1=\p_{x_2}\lf(e^{-2\la}\, \lf(\p_{x_1}\ov{\Phi}_1\,\p_{x_2}\Phi_1 \p_{x_2}\ov{\Phi}_2-|\p_{x_2}\ov{\Phi}_1|^2\,\p_{x_1}\ov{\Phi}_2\rg) \rg)- g\,\p^2_{x_2^2}\Phi_1\\[5mm]
\ds\p_{x_2}g\, \p_{x_2}\Phi_2=\p_{x_2}\lf(e^{-2\la}\, \lf(\p_{x_1}\ov{\Phi}_1\,|\p_{x_2}\ov{\Phi}_2|^2-\p_{x_2}\ov{\Phi}_1\,\p_{x_1}\ov{\Phi}_2\p_{x_2}\Phi_2\rg) \rg)- g\,\p^2_{x_2^2}\Phi_2
\end{array}
\rg.
\ee
Using (\ref{V.17}) we obtain
\be
\label{p-15}
\lf\{
\begin{array}{l}
\ds\p_{x_1}g\, \p_{x_1}\Phi_1=\p_{x_1}\lf(e^{-2\la}\, \lf(|\p_{x_1}\ov{\Phi}_1|^2\,\p_{x_2}\ov{\Phi}_2+\p_{x_2}\ov{\Phi}_2\,\p_{x_1}\Phi_2\,\p_{x_1}\ov{\Phi}_2\rg)\rg)- g\,\p^2_{x_1^2}\Phi_1\\[5mm]
\ds\p_{x_1}g\, \p_{x_1}\Phi_2=\p_{x_1}\lf(e^{-2\la}\, \lf(-\,\p_{x_2}\ov{\Phi}_1\ \p_{x_1}\ov{\Phi}_1\ \p_{x_1}{\Phi}_1 -\p_{x_2}\ov{\Phi}_1\,|\p_{x_1}\ov{\Phi}_2|^2\rg)\rg)- g\,\p^2_{x_1^2}\Phi_2
\\[5mm]
\ds\p_{x_2}g\, \p_{x_2}\Phi_1=\p_{x_2}\lf(e^{-2\la}\, \lf(-\p_{x_1}\ov{\Phi}_2\,\p_{x_2}\Phi_2\, \p_{x_2}\ov{\Phi}_2-|\p_{x_2}\ov{\Phi}_1|^2\,\p_{x_1}\ov{\Phi}_2\rg) \rg)- g\,\p^2_{x_2^2}\Phi_1\\[5mm]
\ds\p_{x_2}g\, \p_{x_2}\Phi_2=\p_{x_2}\lf(e^{-2\la}\, \lf(\p_{x_1}\ov{\Phi}_1\,|\p_{x_2}\ov{\Phi}_2|^2+\p_{x_2}\ov{\Phi}_1\,\p_{x_1}\ov{\Phi}_1\p_{x_2}\Phi_1\rg) \rg)- g\,\p^2_{x_2^2}\Phi_2
\end{array}
\rg.
\ee
Hence
\be
\label{p-16}
\lf\{
\begin{array}{l}
\ds\p_{x_1}g\, \p_{x_1}\Phi_1=\p_{x_1}\lf(e^{-2\la}\, |\p_{x_1}\ov{\Phi}|^2\,\p_{x_2}\ov{\Phi}_2\rg)- g\,\p^2_{x_1^2}\Phi_1=\p^2_{x_1 x_2}\ov{\Phi}_2- g\,\p^2_{x_1^2}\Phi_1\\[5mm]
\ds\p_{x_1}g\, \p_{x_1}\Phi_2=-\p_{x_1}\lf(e^{-2\la}\, |\p_{x_1}\ov{\Phi}|^2\,\p_{x_2}\ov{\Phi}_1\rg)- g\,\p^2_{x_j^2}\Phi_2=-\p^2_{x_1 x_2}\ov{\Phi}_1- g\,\p^2_{x_1^2}\Phi_2\\[5mm]
\ds\p_{x_2}g\, \p_{x_2}\Phi_1=-\p_{x_2}\lf(e^{-2\la}\,|\p_{x_2}\Phi|^2\ \p_{x_1}\ov{\Phi}_2\rg)- g\,\p^2_{x_2^2}\Phi_1=-\p^2_{x_1 x_2}\ov{\Phi}_2- g\,\p^2_{x_2^2}\Phi_1\\[5mm]
\ds\p_{x_2}g\, \p_{x_2}\Phi_2=\p_{x_2}\lf(e^{-2\la}\, \p_{x_1}\ov{\Phi}_1\,|\p_{x_2}\ov{\Phi}|^2\rg)- g\,\p^2_{x_2^2}\Phi_2=\p^2_{x_1 x_2}\ov{\Phi}_1- g\,\p^2_{x_2^2}\Phi_2
\end{array}
\rg.
\ee
This implies the following lemma
\begin{Lm}
\label{lm-mean-curvature}
Let $\Phi=(\Phi_1,\Phi_2)$ be a conformal Lagrangian immersion from $D^2$ into ${\C}^2$ and let $g^{-1}$ be its associated Lagrangian angle satisfying
\be
\label{p-17}
\Phi^\ast(dz_1\wedge dz_2)=g^{-1}\ dvol_{\Phi}\ \quad\mbox{ on }D^2
\ee
Then the mean curvature vector of the immersion is given by 
 \be
 \label{p-18}
 \vec{H}_\Phi:=\frac{e^{-2\la}}{2}\,\Delta\Phi=-\frac{e^{-2\la}}{2}\, g^{-1}\,\nabla g\cdot \nabla \Phi=-\frac{e^{-2\la}}{2}\, g^{-1}\,\lf[\p_{x_1}g \,\p_{x_1}\Phi+\p_{x_2}g \,\p_{x_2}\Phi\rg]\ .
 \ee
 \hfill $\Box$
\end{Lm}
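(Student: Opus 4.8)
The plan is to reduce the asserted identity to the single structural equation $\operatorname{div}(g\,\nabla\Phi)=0$ and to verify the latter by a direct computation. Indeed, combining the target formula (\ref{p-18}) with the definition (\ref{p-11}) of $\vec H_\Phi$, the claim is equivalent to
\[
g\,\Delta\Phi+\nabla g\cdot\nabla\Phi=\operatorname{div}(g\,\nabla\Phi)=0,
\]
so it suffices to establish $\Delta\Phi=-g^{-1}\,\nabla g\cdot\nabla\Phi$ component by component. The starting data is the explicit formula for the angle coming from Lemma~\ref{lm-angle} and Remark~\ref{rm-angle}, namely $g=e^{-2\la}\big(\p_{x_1}\ov{\Phi}_1\,\p_{x_2}\ov{\Phi}_2-\p_{x_2}\ov{\Phi}_1\,\p_{x_1}\ov{\Phi}_2\big)$, together with the conformality relations (\ref{p-1}) and the combined identity (\ref{V.17}).

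First I would differentiate this expression for $g$ and pair it with $\nabla\Phi$. Writing $\p_{x_j}g\,\p_{x_j}\Phi_k$ through the Leibniz rule, one peels off a term $g\,\p^2_{x_j^2}\Phi_k$ (this is the passage (\ref{p-13})--(\ref{p-14})); the remaining pieces are then rewritten using $e^{-2\la}|\p_{x_j}\Phi|^2=1$ and, crucially, the Lagrangian relation (\ref{V.17}) to trade products such as $\p_{x_2}\ov{\Phi}_1\,\p_{x_1}\Phi_1\,\p_{x_1}\ov{\Phi}_2$ for combinations that factor, via conformality, into a single mixed second derivative. The outcome is the compact system (\ref{p-16}), in which each of the four quantities $\p_{x_j}g\,\p_{x_j}\Phi_k$ equals $\mp\,\p^2_{x_1x_2}\ov{\Phi}_{k'}$ (with $k'\ne k$) minus $g\,\p^2_{x_j^2}\Phi_k$.

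The proof is then completed by summation. Adding the two equations of (\ref{p-16}) involving $\Phi_1$, the cross terms $+\p^2_{x_1x_2}\ov{\Phi}_2$ and $-\p^2_{x_1x_2}\ov{\Phi}_2$ cancel and one is left with $\nabla g\cdot\nabla\Phi_1=-g\,\Delta\Phi_1$; adding the two equations involving $\Phi_2$ gives $\nabla g\cdot\nabla\Phi_2=-g\,\Delta\Phi_2$ in the same way. Multiplying by $g^{-1}$ and inserting into (\ref{p-11}) yields (\ref{p-18}). The one genuinely delicate step is the intermediate simplification producing (\ref{p-16}): the many four-factor products only collapse because the constraint (\ref{V.17}) is exactly what converts the ``wrong'' quadratic combinations into the off-diagonal Hessian terms $\p^2_{x_1x_2}\ov{\Phi}_{k'}$ that subsequently cancel. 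I expect this algebraic bookkeeping, rather than the final cancellation, to be the main obstacle; as a sanity check one recognizes (\ref{p-18}) as the classical formula $\vec H=J\,\nabla^\Sigma\beta$ expressing the mean curvature of a Lagrangian immersion through the gradient of its Lagrangian angle $\beta$ (here $g^{-1}=e^{i\beta}$).
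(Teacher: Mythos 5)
Your proposal is correct and follows essentially the same route as the paper: the Leibniz-rule expansion of $\p_{x_j}g\,\p_{x_j}\Phi_k$ using the determinant formula for $g$, the collapse via conformality (\ref{p-1}) and the Lagrangian identity (\ref{V.17}) to the mixed-Hessian system (\ref{p-16}), and the final cancellation of the $\p^2_{x_1x_2}\ov{\Phi}_{k'}$ terms upon summing over $j$ are exactly the steps (\ref{p-13})--(\ref{p-16}) of the paper. Your reformulation of the claim as $\operatorname{div}(g\,\nabla\Phi)=0$ and the closing remark identifying (\ref{p-18}) with Dazord's formula $\vec H=J\,\nabla^\Sigma\beta$ are accurate but do not change the substance of the argument.
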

This lemma had first been obtained by Dazord~\cite{Daz} and such an expression of the mean curvature of a Lagrangian surface in fact extends to the more general framework of K\"ahler--Einstein manifolds~\cite{Ono}.
\subsubsection{The Lagrangian/Hamiltonian Stationary Equation}
We consider a conformal Lagrangian immersion $\Phi$ from $D^2$ into ${\C}^2$ which is critical point of the area for any infinitesimal variation preserving the Lagrangian condition. Hence we consider $\Phi+t\Psi$ such that
\be
\label{p-19}
\begin{array}{rl}
\lf.\frac{d}{dt}(\Phi+t\Psi)^\ast\Om)\rg|_{t=0}=0\quad&\Longleftrightarrow \quad d\Phi_1\wedge d\Psi_2+d\Psi_1\wedge d\Phi_2+d\Phi_3\wedge d\Psi_4+d\Psi_3\wedge d\Phi_4=0\\[5mm]
 &\Longleftrightarrow \quad d\lf[ \Psi_1\,d\Phi_2-\Psi_2\, d\Phi_1+\Psi_3\,d\Phi_4-\Psi_4\, d\Phi_3\rg]=0\\[5mm]
  &\Longleftrightarrow \quad d\lf[ \Psi \cdot i\,d\Phi\rg]=0
\end{array}
\ee
Hence locally there exists a function $h$, called Hamiltonian function, such that
\be
\label{p-20}
\Psi \cdot i\,d\Phi=dh
\ee
Recall that $e^{-\la}\,(i\,\p_{x_1}\Phi,i\,\p_{x_2}\Phi)$ realizes an orthonormal basis of the normal plane to the immersion. This implies that $\Psi$ has the form
\be
\label{p-21}
\Psi= e^{-2\la}\ \p_{x_1} h\ i\,\p_{x_1}\Phi+e^{-2\la}\ \p_{x_2} h\ i\,\p_{x_2}\Phi+ e^{-2\la}\ a\,\p_{x_1}\Phi+e^{-2\la}\ b\,\p_{x_2}\Phi
\ee
where $a$ and $b$ are real functions. The first variation of the area is given by the mean curvature vector, in particular, using lemma~\ref{lm-mean-curvature}, $\Phi$ is a critical point of the area among Lagrangian map if and only if for any Hamiltonian $h$
\be
\label{p-22}
\begin{array}{l}
\ds 0=\int_{D^2}<\Delta \Phi\cdot \Psi> \ dx^2\\[5mm]
\ds=\int_{D^2}\lf<\lf[g^{-1}\,\p_{x_1}g \,\p_{x_1}\Phi+g^{-1}\,\p_{x_2}g \,\p_{x_2}\Phi\rg]\cdot e^{-2\la}\ \lf[\p_{x_1} h\ i\,\p_{x_1}\Phi+\ \p_{x_2} h\ i\,\p_{x_2}\Phi\rg]\rg>\ dx^2\\[5mm]
\ds= -\,\int_{D^2}\, i\,g^{-1}\,\p_{x_1}g\,\p_{x_1}h +i\,g^{-1}\,\p_{x_2}g\,\p_{x_2}h\ dx^2
\end{array}
\ee
Hence we obtain the following lemma
\begin{Lm}
\label{lm-euler-lagrange}
Let $\Phi=(\Phi_1,\Phi_2)$ be a conformal Lagrangian immersion from $D^2$ into ${\C}^2$ and let $g^{-1}$ be its associated Lagrangian angle satisfying
\be
\label{p-23}
\Phi^\ast(dz_1\wedge dz_2)=g^{-1}\ dvol_{\Phi}\ \quad\mbox{ on }D^2
\ee
Then $\Phi$ is a critical point of the area for any perturbation which preserves infinitesimally the Lagrangian condition if and only if the following equation is satisfied
\be
\label{p-24}
\lf\{
\begin{array}{l}
\ds \operatorname{div}\lf[ g^{-1}\,\nabla g\rg]=0\quad\mbox{in }D^2\\[5mm]
\ds g^{-1}\,\p_{r}g=0\quad\mbox{ in }\p D^2\ .
\end{array}
\rg.
\ee
\hfill $\Box$
\end{Lm}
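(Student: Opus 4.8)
The plan is to promote the weak (integrated) criticality condition already obtained in~(\ref{p-22}) to the strong Euler--Lagrange system~(\ref{p-24}) by integration by parts and the fundamental lemma of the calculus of variations. First I would record what~(\ref{p-22}) really says: along an admissible variation $\Phi+t\Psi$, only the \emph{normal} part of $\Psi$ contributes to the first variation of the area, because $\Delta\Phi=2\,e^{2\la}\,\bH_\Phi$ is normal to the immersion (Lemma~\ref{lm-mean-curvature}) and is therefore orthogonal to the tangential components $a\,\p_{x_1}\Phi+b\,\p_{x_2}\Phi$ appearing in~(\ref{p-21}). Hence the whole first variation is encoded by the single scalar Hamiltonian $h$ generating the normal part, and~(\ref{p-22}) reduces criticality to the statement that
\be
\label{plan-1}
-\int_{D^2} i\, g^{-1}\,\nabla g\cdot \nabla h\ dx^2=0\qquad\mbox{for every admissible Hamiltonian }h.
\ee
The decisive structural point is that the correspondence $h\mapsto\Psi$ defined by~(\ref{p-21}) yields a genuine Lagrangian-preserving deformation for an \emph{arbitrary} function $h$, so that $h$ may be taken to range over all of $C^\infty(\ov{D^2})$ with unconstrained boundary trace. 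Since $g$ is $S^1$-valued one has $g^{-1}\nabla g=-\ov{g^{-1}\nabla g}$, i.e.\ $g^{-1}\nabla g$ is purely imaginary, so $i\,g^{-1}\nabla g$ is a real vector field and~(\ref{plan-1}) is a bona fide real identity.

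Next I would integrate by parts in~(\ref{plan-1}), writing
\be
\label{plan-2}
\int_{D^2} i\, g^{-1}\,\nabla g\cdot \nabla h\ dx^2=-\int_{D^2}\operatorname{div}\lf[i\, g^{-1}\,\nabla g\rg]\, h\ dx^2+\int_{\p D^2} i\, g^{-1}\,\p_r g\ h\ d\sigma .
\ee
Testing first against $h\in C^\infty_c(D^2)$ annihilates the boundary term, and the fundamental lemma forces $\operatorname{div}[i\, g^{-1}\,\nabla g]=0$ in $D^2$; as $i$ is a nonzero constant this is exactly the interior equation $\operatorname{div}[g^{-1}\,\nabla g]=0$ of~(\ref{p-24}). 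Feeding this back into~(\ref{plan-2}) and now allowing $h$ with arbitrary boundary trace leaves $\int_{\p D^2} i\, g^{-1}\,\p_r g\ h\ d\sigma=0$ for all such $h$, whence the natural (free) boundary condition $g^{-1}\,\p_r g=0$ on $\p D^2$. The converse is the same computation read backwards: if both equations of~(\ref{p-24}) hold, the bulk and boundary terms in~(\ref{plan-2}) vanish, so the first variation is zero for every Lagrangian-preserving $\Psi$.

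The genuinely delicate point is the boundary, and it is precisely what separates the pointwise system~(\ref{p-24}) from its interior-only shadow. One must justify that admissible Hamiltonians are unconstrained on $\p D^2$ — that the variational problem allows deformations up to and along the boundary — so that testing~(\ref{plan-2}) against $h$ with nonzero trace is legitimate and one extracts the free Neumann-type condition $g^{-1}\,\p_r g=0$ rather than inadvertently imposing a Dirichlet condition on $g$. A secondary but essential checkpoint, already built into~(\ref{p-22}), is the orthogonal decoupling of the tangential part of $\Psi$: because $\Delta\Phi$ is normal, reparametrization directions do not affect the area to first order, which is what legitimizes reducing the entire first variation to the scalar $h$ and hence casting criticality in the clean form~(\ref{plan-1}).
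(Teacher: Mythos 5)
Your proposal is correct and takes essentially the same route as the paper: the paper's own proof consists of deriving the weak criticality identity (\ref{p-22}) from the representation (\ref{p-20})--(\ref{p-21}) of Lagrangian-preserving variations and Lemma~\ref{lm-mean-curvature}, and it leaves the final step implicit, which is precisely what you carry out (integration by parts, compactly supported Hamiltonians $h$ giving $\operatorname{div}[g^{-1}\nabla g]=0$, arbitrary boundary traces giving the free condition $g^{-1}\partial_r g=0$, and the computation read backwards for the converse). Your structural observations, namely that $\Delta\Phi$ is normal so the tangential part of $\Psi$ decouples, and that $i\,g^{-1}\nabla g$ is a real vector field since $g$ is $S^1$-valued, are exactly the points underlying the paper's passage from (\ref{p-21}) to (\ref{p-22}).
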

\begin{Rm}
\label{rm-0.1}
The equation (\ref{p-24}) is the so called $S^1$-harmonic map equation.  We call the system
\be
\label{p-25}
\lf\{
\begin{array}{l}
\ds\mbox{ div}\lf(g\,\nabla\Phi\rg)=0\quad\mbox{in }D^2\\[5mm]
\ds \operatorname{div}\lf[ g^{-1}\,\nabla g\rg]=0\quad\mbox{in }D^2\\[5mm]
\ds g^{-1}\,\p_{r}g=0\quad\mbox{ in }\p D^2
\end{array}
\rg.
\ee
Hamiltonian stationary system, and when the map $\Phi$ is a conformal immersion, solutions to (\ref{p-25}) parametrize Hamiltonian stationary surfaces (also called $H$-minimal surfaces, see~\cite{Oh2, Oh3}). 
\hfill $\Box$
\end{Rm}
\subsection{Conformal invariance}
Let $\om$ be a domain bi-lipschitz homeomorphic to a disc and let $p\in \om$. We consider the Green function $G_p^\om$ satisfying
\be
\label{0-I.1}
\lf\{
\begin{array}{l}
\ds\Delta G_p^\om=\, 2\pi\, \delta_p\quad\mbox{ in }\om\\[5mm]
\ds G_p^\om=0\quad\mbox{ on }\p \om\ .
\end{array}
\rg.
\ee
$G_p^\om$ is the unique function such that
\be
\label{0-I.2}
\forall \ f\in C^2_0(\om) \quad\mbox{ there holds }\quad\int_\om  G_p^\om\ \ov{\p}\p f=\frac{\pi}{i}\, f(p)\ .
\ee
Let $\phi$ be an holomorphic transformation from $D^2$ into $\om$ sending $0$ to $p$. There holds obviously
\be
\label{0-I.3}
\forall \ f\in C^2_0(\om) \quad\mbox{ there holds }\quad\int_{D^2}  \phi^\ast\lf[G_p^\om\ \ov{\p}\p f\rg]=\int_\om  G_p^\om\ \ov{\p}\p f=\frac{\pi}{i}\, f(p)\ .
\ee
We have also obviously
\be
\label{0-I.4}
\quad\int_{D^2}  \phi^\ast\lf[G_p^\om\ \ov{\p}\p f\rg]=\int_{D^2}  G_p^\om\circ\phi\ \ov{\p}\p \lf(f\circ\phi\rg)=\frac{\pi}{i}\, f\circ\phi(0)
\ee
By uniqueness of the Green function we deduce
\be
\label{0-I.5}
 \forall x\in D^2\quad \quad G_p^\om\circ\phi(x)=2\pi\,\log |x|\ .
\ee
Observe moreover that the system of equations
\be
\label{0-I.0}
\lf\{
\begin{array}{l}
\operatorname{div}(\,g\,\nabla u)=0\quad\mbox{ in }\om\\[5mm]
\ds\operatorname{div}(\,\ov{g}\,\nabla g)=0\quad\mbox{ in }\om\\[5mm]
\ds\ov{g}\,\p_\nu g=0 \quad\mbox{ on }\p \om\
\end{array}
\rg.
\ee
is invariant under the composition with $\phi$.

\subsection{Admissible $S^1$ harmonic maps}
Observe first that there is a one to one correspondence between$S^1$ valued harmonic maps $g$ in $D^2$ having isolated points singularities $p_l$ with degree $d_l$ and satisfying
\[
\begin{array}{l}
\ds\operatorname{div}(\,\ov{g}\,\nabla g)=0\quad\mbox{ in }D^2\\[5mm]
\ds\p_\nu g=0\quad\mbox{ on }\p D^2
\end{array}
\]
and Green functions satisfying
\be
\label{II.0001}
\lf\{
\begin{array}{rl}
\ds\Delta G&\ds=2\pi\ \sum_{l=1}^N\ d_l\, \delta_{p_l}\quad\mbox{in }{\mathcal D}'(D^2)\\[5mm]
\ds G&\ds=0\quad\mbox{ on }\quad\p D^2\ .
\end{array}
\rg.
\ee
This correspondence is given by the equation
\be
\label{0-I-6}
\ov{g}\,\nabla g=i\,\nabla^\perp G\ .
\ee

\begin{Dfi}
\label{df-I.1}
Let $(d_l)_{l=1\cdots N}$ such that $d_l=\pm1$ and $\sum_{l=1}^N d_l=0$. Let $p_l$ be $N$ distinct points in $D^2$ and let $G$ be the Green's function solution to 
\be
\label{0-I-7}
\lf\{
\begin{array}{rl}
\ds\Delta G&\ds=2\pi\ \sum_{l=1}^N\ d_l\, \delta_{p_l}\quad\mbox{in }{\mathcal D}'(D^2)\\[5mm]
\ds G&\ds=0\quad\mbox{ on }\quad\p D^2\ .
\end{array}
\rg.
\ee
We say that $G$ or its associated $S^1-$harmonic map is admissible if each of the connected components of $G\ne 0$ is a disc containing exactly one $p_l$.\hfill $\Box$
 \end{Dfi}
 Observe that for any admissible Green Function $G$ and for ${\mathcal H}^1-$almost every $t\in{\R}$ each of the connected components $\om_i(t)$ of $\{ G(x)\le t\}$  satisfy
\be
\label{0-I-8}
\int_{\p \om_j(t)}\frac{\p G}{\p \nu}=2\pi\ ,
\ee
 Because of the conformal invariance of the Green function mentioned in the
 previous subsection, for each connected component $\om$ of $G\ne 0$ there exists an holomorphic  transformation $\phi$ from $D^2$ into $\om$ sending $0$ to $p_l$ such that
 \be
\label{0-I.9}
 \forall x\in D^2\quad \quad G\circ\phi(x)=2\pi\,\log |x|\ .
\ee
This implies in particular that every $t\in {\R}\setminus \{0\}$ is a regular point for $G$.
\section{Everywhere discontinuous Hamiltonian stationary discs}
\label{sec:everywhere-discontinuous}
\reset
\begin{Th}
\label{th-every}
For any $1\le p<2$ there exists $(\Phi,g)\in W^{1,p}(D^2,{\C}^2)\times W^{1,p}(D^2,S^1)$ solution to the Conformal Hamiltonian Stationary System
\be
\label{E.1}
\lf\{
\begin{array}{l}
\mbox{ div}(g\,\nabla \Phi)=0\quad\mbox{in }\ {\mathcal D}'(D^2)\ \\[5mm]
\ds |\p_{x_1}\Phi|^2=|\p_{x_2}\Phi|^2\quad\mbox{ a. e. in } D^2\\[5mm]
\ds \p_{x_1}\Phi\cdot\p_{x_2}\Phi=0\quad\mbox{ a. e.  in }D^2\\[5mm]
\ds \operatorname{div}\lf[ g^{-1}\,\nabla g\rg]=0\quad\mbox{in }{\mathcal D}'(D^2)\\[5mm]

\end{array}
\rg.
\ee
and $\Phi$ as well as $g$ are nowhere continuous.\hfill $\Box$
\end{Th}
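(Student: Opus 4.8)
The plan is to bypass any minimisation and to produce $(\Phi,g)$ by an explicit formula once a suitable nowhere continuous Lagrangian angle has been built. By the correspondence (\ref{0-I-6}), finding $g$ with $\operatorname{div}(g^{-1}\nabla g)=0$ is the same as prescribing a Green--type potential $G$ with
\be
g^{-1}\nabla g=i\,\nabla^\perp G,
\ee
the equation $\operatorname{div}(g^{-1}\nabla g)=0$ being then automatic since $\operatorname{div}(i\,\nabla^\perp G)=0$. So I would first construct such a $G$ whose singular set is \emph{dense} in $D^2$. Fix a countable dense family $(a_n)_{n\in\N}\subset D^2$ and, near each $a_n$, place a dipole: a pair $p_n^+,p_n^-$ at mutual distance $\e_n$ carrying degrees $+1$ and $-1$, and set $G=\sum_n\big(G_{p_n^+}-G_{p_n^-}\big)$ with $G_a(x)=\log|(x-a)/(1-\ov a\,x)|$ the Green function of the disc. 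The internal cancellation of each dipole makes the far field of the $n$-th term of size $O(\e_n)$ and its near field have $L^p$ gradient of size $O(\e_n^{(2-p)/p})$; letting $\e_n\downarrow 0$ fast enough, with a diagonal choice along an exhaustion $p_k\uparrow 2$ and Hölder's inequality on the bounded domain $D^2$, the series converges in $W^{1,p}(D^2)$ for every $p<2$. Thus $G\in\bigcap_{p<2}W^{1,p}$ and the associated $g=e^{i\psi}$, $\nabla\psi=\nabla^\perp G$, lies in $\bigcap_{p<2}W^{1,p}(D^2,S^1)$.

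Next I would record the two discontinuities. Near each $p_n^\pm$ one has $G=\pm\log|x-p_n^\pm|+O(1)$, so $|G|\to+\infty$ at every $p_n^\pm$; since these points are dense, $G$ is unbounded on every open subset of $D^2$ and is therefore nowhere continuous, which already forces $\Phi$ to be nowhere continuous. Independently, $g$ has winding number $\pm1$ around each $p_n^\pm$, hence is onto $S^1$ on every small circle enclosing a single such point; its oscillation thus equals the diameter of $S^1$ on arbitrarily small balls centred at any point of $D^2$, so $g$ is nowhere continuous as well.

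It remains to exhibit the immersion. Following the scheme of the introduction with (formally) vanishing boundary datum, the system is solved by $u=G$ and $v=i\,g$, namely
\be
\Phi:=\begin{pmatrix} G\\[2mm] -\,i\,g^{-1}\end{pmatrix}\ \in\ \bigcap_{p<2}W^{1,p}(D^2,\C^2).
\ee
Everything then reduces to the single identity $\nabla g=i\,g\,\nabla^\perp G$ together with $\nabla^{\perp\perp}=-\mathrm{Id}$. Indeed $\nabla^\perp(i\,g)=i\,\nabla^\perp g=g\,\nabla G$, so $g\,\nabla u=\nabla^\perp v$ and $\operatorname{div}(g\,\nabla u)=\operatorname{div}\nabla^\perp v=0$; and since $g\,\ov g=1$ gives $g\,\nabla\ov g=-\ov g\,\nabla g$, one finds $g\,\nabla\ov v=i\,\ov g\,\nabla g=-\nabla^\perp G$, whence $\operatorname{div}(g\,\nabla\ov v)=0$. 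Both components of $\operatorname{div}(g\,\nabla\Phi)$ therefore vanish. Conformality is equally direct: from $-\p_{x_2}v=g\,\p_{x_1}G$, $\p_{x_1}v=g\,\p_{x_2}G$ and $|g|=1$ one gets $|\p_{x_1}\Phi|^2=|\p_{x_1}G|^2+|\p_{x_2}G|^2=|\p_{x_2}\Phi|^2$ and $\p_{x_1}\Phi\cdot\p_{x_2}\Phi=\p_{x_1}G\,\p_{x_2}G-\p_{x_1}G\,\p_{x_2}G=0$, so the two conformality equations of (\ref{E.1}) hold almost everywhere. Note that $\nabla\Phi\sim|x-p_n^\pm|^{-1}\notin L^2$, consistent with this being a wild, infinite energy solution invisible to the $W^{1,2}$ variational theory.

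The algebra of the last paragraph is cost--free; the real work — and the only delicate point — is the construction of the first paragraph, where one must simultaneously guarantee (i) summability of the dipole series in $W^{1,p}$ for every $p<2$ at once, (ii) that passing to the limit preserves the correspondence (\ref{0-I-6}), so that the limiting $g=e^{i\psi}$ is a genuine $S^1$--valued $W^{1,p}$ map with $g^{-1}\nabla g=i\,\nabla^\perp G$, and (iii) that the singular behaviour is not washed out by the superposition, i.e. that $G$ really blows up and $g$ really winds at each $p_n^\pm$ despite the infinitely many other contributions. Calibrating the scales $\e_n$ is precisely what makes (i)–(iii) compatible, and this is the step I expect to require the most care.
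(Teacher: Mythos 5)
Your algebraic reduction is exactly the paper's, only with the components swapped: you take $\Phi=(G,-i\,\ov{g})$ while the paper takes $\Phi=(\ov{g},iG)$, and in both cases all four equations of (\ref{E.1}) collapse, through the single identity $\ov{g}\,\nabla g=i\,\nabla^\perp G$, to the distributional fact that $\operatorname{div}\nabla^\perp f=0$ for any $f\in W^{1,1}$; your verification of $|\p_{x_1}\Phi|=|\p_{x_2}\Phi|$ and $\p_{x_1}\Phi\cdot\p_{x_2}\Phi=0$ almost everywhere is correct. The entire difference lies in how the nowhere continuous $g$ is produced. The paper does not construct it: it invokes the everywhere discontinuous $S^1$-valued weakly harmonic maps of Almeida \cite{Al} and Ji \cite{Ji}, for which the relation $\ov{g}\,\nabla g=i\,\nabla^\perp G$, the membership in $W^{1,p}$ for all $p<2$, and the nowhere-continuity are already established. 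You instead sketch a from-scratch construction superposing $\pm 1$ dipoles at a dense set of points, which is in substance a reconstruction of Ji's ``simple construction''.

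That sketch is viable, but as written it has a genuine gap, and it sits exactly where you flag it: items (i)--(iii) are not a calibration detail, they are the whole mathematical content of the step the paper outsources to \cite{Al} and \cite{Ji}, and two of them are subtler than your text suggests. For (iii): because the singular points are dense, \emph{every} circle around $p_n^\pm$ may be straddled by other dipoles (one pole inside, one outside), and each straddling dipole shifts the winding by $\pm 1$; so ``degree $\pm1$ on every small circle'' is false as stated. One must choose the $\e_m$ inductively (e.g.\ $\e_m$ much smaller than $2^{-m}$ times the distance from $a_m$ to all previously placed poles) and then argue that for a.e.\ sufficiently small radius the circle around $p_n^\pm$ is straddled by no dipole -- the bad radii in $(0,\e_n)$ form a set of measure at most $\sum_{m>n}\e_m$, which must be beaten scale by scale. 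The blow-up of $G$ at $p_n^\pm$ has the same difficulty: the tail $\sum_{m\neq n}\bigl(G_{p_m^+}-G_{p_m^-}\bigr)$ has singularities accumulating at $p_n^\pm$, hence is not locally bounded there, and the same inductive calibration is needed to show the logarithmic divergence survives on a set of positive measure in every ball; otherwise ``$|G|\to+\infty$ at $p_n^\pm$, hence nowhere continuous'' does not follow. For (ii): the identity $\ov{g}\,\nabla g=i\,\nabla^\perp G$ must be shown to pass to the limit of the infinite product defining $g$ (Cauchy estimates for the partial products in $W^{1,p}$), since it is this identity, not the equation for each finite partial sum, that makes $\operatorname{div}(g^{-1}\nabla g)=0$ automatic. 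None of this is out of reach -- it is essentially the content of \cite{Ji} -- but until it is written out your proof is incomplete precisely at its only nontrivial point, whereas the paper's one-line proof is complete modulo the citation.
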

\noindent{\bf Proof of theorem~\ref{th-every}} Let $g$ be the everywhere discontinuous harmonic maps constructed in \cite{Al}(and also \cite{Ji}). The map $\Phi:=(\ov{g}, iG)$ where
$g^{-1}\nabla g=i\,\nabla^\perp G$ is solving the theorem.\hfill $\Box$
\begin{Rm}
\label{rm-every} Observe that $\Phi$ realises a conformal infinite covering of the cylinder $S^1\times\{0\}\times{\R}$. \hfill $\Box$
\end{Rm}

\section{The case of one point singularity}
\reset
\label{sec:one-point}
In this section we prove the main existence result in a simple configuration: a disc with only a singularity of degree $1$ (or $-1$). In this particular case, solutions can also be constructed by Fourier decomposition (see Remark \ref{Rmk: solutions-with-Fourier}). We present instead a more indirect proof, which seems to be more robust and provides a more delicate analysis following a convergence procedure reminiscent of that introduced in~\cite{BBH}. We will generalize this approach in the next section to prove the existence result in the case of several singularities.
\begin{Th}
\label{thm-one-singularity}
Let $g\in W^{1,(2,\infty)}(D^2, S^1)$ weakly harmonic with exactly one point singularity $p\in D^2\subset {\C}$ of multiplicity $\pm 1$ satisfying
\be
\label{I.0}
\lf\{
\begin{array}{l}
\ds\operatorname{div}(\,\ov{g}\,\nabla g)=0\quad\mbox{ in }D^2\\[5mm]
\ds\ov{g}\,\p_\nu g=0 \quad\mbox{ on }\p D^2\
\end{array}
\rg.
\ee
Let $\psi\in C^1(\p D^2,{\C})$. Then there exists  $u\in W^{1,2}(D^2,{\C})\cap C^\infty_{loc}(D^2\setminus\{p\},{\C})$ satisfying
\be
\label{I.00}
\lf\{
\begin{array}{l}
\operatorname{div}(\,g\,\nabla u)=0\quad\mbox{ in }D^2\\[5mm]
\ds u=\psi\quad\mbox{ on }\p D^2\ .
\end{array}
\rg.
\ee
if and only if
\be
\label{I.000}
\int_{\p D^2}\psi\, d{g}=0\ .
\ee
Moreover, if (\ref{I.000}) holds $u$ is unique.
\hfill $\Box$
\end{Th}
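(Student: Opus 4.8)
The statement is explicit enough that I would prove it by separation of variables after reducing to a model singularity, keeping the variational picture of the introduction as the conceptual guide for why the number $\int_{\p D^2}\psi\,dg$ is the relevant obstruction.

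\textbf{Reduction to a model.} By the conformal invariance recorded in the preliminaries and the correspondence $\ov g\,\nabla g=i\,\nabla^\perp G$, I would first compose with a holomorphic automorphism $\phi$ of $D^2$ sending $0$ to $p$, so that $G\circ\phi=\log|x|$ and $g\circ\phi=(z/|z|)^d$ with $d=\pm1$. The equation $\operatorname{div}(g\,\nabla u)=0$, the boundary condition $u=\psi$, and the quantity $\int_{\p D^2}\psi\,dg$ are all invariant under this change of variables (the last one because $\phi$ restricts to an automorphism of $\p D^2$), so it suffices to treat $g=(z/|z|)^d$ on the standard disc, where one checks directly that $\ov g\,\p_\nu g=0$ on $\p D^2$ and that $g$ is weakly harmonic with a single degree-$d$ singularity at the origin.

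\textbf{Fourier analysis of the model.} In polar coordinates the equation becomes $\Delta u+\tfrac{id}{r^2}\,\p_\vartheta u=0$; expanding $u=\sum_n f_n(r)\,e^{in\vartheta}$ decouples it (the operator commutes with rotations) into the Euler equations $r^2 f_n''+r f_n'-n(n+d)f_n=0$, whose solutions are $r^{\pm\sqrt{n(n+d)}}$ for $n\neq 0,-d$ and $a+b\log r$ for $n\in\{0,-d\}$. The branch with finite Dirichlet energy near the origin is $r^{\sqrt{n(n+d)}}$ when $n\neq 0,-d$ and the constant $a$ when $n\in\{0,-d\}$; in particular for the resonant index $n=-d$ the only candidate is $a\,e^{-id\vartheta}$, which already has infinite energy near $0$. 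Matching $f_n(1)=\hat\psi_n$ therefore produces a finite-energy mode for every $n$ except $n=-d$, which can be matched only if $\hat\psi_{-d}=0$. Since in the model $\int_{\p D^2}\psi\,dg=2\pi i\,d\,\hat\psi_{-d}$ is, up to a nonzero constant, exactly the resonant Fourier coefficient, this is precisely condition (\ref{I.000}). Both the necessity (a $W^{1,2}$ solution forces $f_{-d}\equiv 0$, hence $\hat\psi_{-d}=0$) and, assuming (\ref{I.000}), the existence follow at once, the resulting series converging in $W^{1,2}$ because $\sqrt{n(n+d)}=|n|+O(1)$ makes the energy comparable to $\|\psi\|_{H^{1/2}}^2$. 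Uniqueness follows because the homogeneous problem ($\psi\equiv 0$) kills every mode, the log and the resonant directions being excluded by finiteness of energy.

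\textbf{Regularity and transport.} Away from $p$ the coefficients of $\Delta u=-i\,\nabla^\perp G\cdot\nabla u$ are smooth, so elliptic bootstrap yields $u\in C^\infty_{loc}(D^2\setminus\{p\})$, while the $C^1$ regularity of $\psi$ controls the behaviour near $\p D^2$; undoing $\phi$ returns a solution for the original $g$, the invariances of the first step guaranteeing that the criterion is unchanged. The part I expect to be delicate is exactly the borderline resonant mode $n=-d$: this is where the $L^2$ theory loses coercivity, which is the concrete face of the optimal Wente constant and of the energy blow-up $\int|\nabla u_t|^2\sim\Theta_g(\psi)/\sqrt{1-t}$ with $\Theta_g(\psi)=\pi\,|\int_{\p D^2}\psi\,dg|^2$ advertised in the introduction. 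A heavier but more robust route, mandatory once several singularities interact, would bypass the explicit Fourier computation and instead minimize the convex $L_t$ for $t<1$, prove the uniform bound $\int|\nabla u_t|^2=O(1)$ under (\ref{I.000}), and pass to a weak $W^{1,2}$ limit, using the compensated-compactness structure of the Jacobian nonlinearity to identify the limit as the sought solution; in either route the genuine difficulty is the same, namely controlling the non-coercive direction singled out by $\int_{\p D^2}\psi\,dg$.
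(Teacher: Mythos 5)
Your proof is correct, but it is not the proof the paper gives: it is essentially the direct Fourier construction that the authors record separately in Remark \ref{Rmk: solutions-with-Fourier} and deliberately set aside. After the same conformal reduction to $p=0$, $g=e^{id\theta}$ that the paper also performs, you separate variables, identify the indicial exponents $\pm\sqrt{n(n+d)}$, observe that the resonant mode $n=-d$ admits no finite-energy branch near the origin (so that $\hat\psi_{-d}=0$, equivalently $\int_{\p D^2}\psi\,dg=0$, is forced), and sum the non-resonant modes $\hat\psi_n\,r^{\sqrt{n(n+d)}}e^{in\theta}$; necessity and uniqueness fall out of the same mode analysis. All of this is sound — the only step worth making explicit is that the series, which solves $\operatorname{div}(g\nabla u)=0$ on the punctured disc, extends distributionally across the origin, a standard removability argument using that a point has zero $W^{1,2}$-capacity and $\nabla u\in L^2$. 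The paper instead argues variationally: Lemma \ref{lm-E-t-psi} produces the unique minimizer $u_t$ of the convex perturbed functional $L_t$ for $t<1$ (coercivity resting on the optimal Wente constant), and the heart of the proof is the compactness Lemma \ref{lm:main-lemma-disk}: if $\|\nabla u_{t_k}\|_{L^2}$ blew up as $t_k\nearrow 1$, the normalized maps $v_k=u_{t_k}/\|\nabla u_{t_k}\|_{L^2}$ would have traces tending to $0$ in $H^{1/2}$, and the lemma — whose proof uses the Wente identity, Fourier decomposition, and the same ODE analysis of the resonant $j=-1$ mode that you carry out, but for $t_k<1$ — forces $\|\nabla v_k\|_{L^2}\to 0$, contradicting the normalization; uniqueness is quoted from Lemma \ref{lma-unique} and necessity from Remark \ref{rm-I.1}. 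What each route buys: yours is shorter, fully explicit, and yields the solution formula together with the sharp energy bound comparable to $\|\psi\|_{H^{1/2}}^2$; the paper's Ginzburg--Landau-style scheme avoids any global separation of variables and is precisely the mechanism that survives in Section \ref{sec:general-case}, where several singularities make a single Fourier decomposition unavailable and Lemma \ref{lm:main-lemma-disk} must be applied component by component — a limitation of the explicit method that you correctly anticipate in your closing paragraph.
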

\begin{Rm}
\label{rm-I.1} One easily sees that (\ref{I.000}) is a necessary condition. Indeed, introducing the Green function $G$ equal to zero on $\p D^2$ such that
\[
\ov{g}\,\nabla g=i\,\nabla^\perp G\ ,
\]
there holds
\[
\int_{\p D^2} u\, dg=\int_{D^2} \nabla u\cdot \nabla^\perp g\ dx^2=-i\,\int_{D^2} g\,\nabla u\cdot \nabla G\ dx^2=i\ \int_{D^2}\,\operatorname{div}(\,g\,\nabla u)\, G\, dx^2=0
\]
\end{Rm}
\begin{Co}
\label{co-I.0}
Let $g\in W^{1,(2,\infty)}(D^2, S^1)$ weakly harmonic with exactly one point singularity $p\in D^2$ of multiplicity $\pm 1$ satisfying
\be
\label{0.0}
\lf\{
\begin{array}{l}
\ds\operatorname{div}(\,\ov{g}\,\nabla g)=0\quad\mbox{ in }D^2\\[5mm]
\ds\ov{g}\,\p_\nu g=0 \quad\mbox{ on }\p D^2\
\end{array}
\rg.
\ee
Let $\psi\in C^1(\p D^2,{\C})$ such that
\be
\label{I.0000}
\int_{\p D^2}\psi\, dg=0\ .
\ee
Then there exists a unique $\Phi\in W^{1,2}(D^2,{\C}^2)\cap C^\infty_{loc}(D^2\setminus\{p\},{\C}^2)$  weakly conformal on $D^2$ with exactly one conical point singularity satisfying
\be
\label{0.00}
\lf\{
\begin{array}{l}
\operatorname{div}(\,g\,\nabla \Phi)=0\quad\mbox{ in }D^2\\[5mm]
\ds u=\psi\quad\mbox{ on }\p D^2\
\end{array}
\rg.
\ee
where $\Phi:=(u,v)$.
\begin{proof}
This is deduced from Theorem~\ref{thm-one-singularity} by taking  the map $u$ given by the theorem~\ref{thm-one-singularity} and $v$ satisfying $g\nabla u=\nabla^\perp\ov{v}$.
\end{proof}
\end{Co}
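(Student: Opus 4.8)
The plan is to transpose to the weighted setting the classical recipe by which a conformal minimal immersion is assembled from a harmonic map and its harmonic conjugate: once the first coordinate $u$ has been produced by Theorem~\ref{thm-one-singularity}, the second coordinate $v$ is recovered as a \emph{stream function} of the divergence-free field $g\,\nabla u$, and the unit-modulus constraint $|g|=1$ is exactly what forces the pair $\Phi=(u,v)$ to be simultaneously weakly conformal and a solution of the structural equation.

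\medskip\noindent\emph{Construction of $u$ and $v$.} Since the boundary datum satisfies the compatibility condition (\ref{I.0000}), Theorem~\ref{thm-one-singularity} furnishes a unique $u\in W^{1,2}(D^2,\C)\cap C^\infty_{loc}(D^2\setminus\{p\},\C)$ with $\operatorname{div}(g\,\nabla u)=0$ in $D^2$ and $u=\psi$ on $\partial D^2$. As $|g|\equiv1$ and $\nabla u\in L^2$, the field $g\,\nabla u$ lies in $L^2(D^2,\C^2)$ and is divergence free in $\mathcal D'(D^2)$; I stress that this identity holds \emph{across} the puncture $p$ (an $L^2$ field has distributional divergence in $H^{-1}$, while $\delta_p\notin H^{-1}(D^2)$ in dimension two, so no Dirac charge can hide at $p$), which is what guarantees a single-valued potential. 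On the simply connected disc a divergence-free $L^2$ field is the $\nabla^\perp$ of a single-valued function, so I define $\ov v\in W^{1,2}(D^2,\C)$, unique up to an additive constant, by $\nabla^\perp\ov v=g\,\nabla u$. Since $|\nabla\ov v|=|\nabla^\perp\ov v|=|g\,\nabla u|=|\nabla u|\in L^2$, one has $v\in W^{1,2}(D^2,\C)$, and away from $p$ both $g$ and $u$ are smooth, whence $v\in C^\infty_{loc}(D^2\setminus\{p\},\C)$. Set $\Phi:=(u,v)$.

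\medskip\noindent\emph{Structural equation and conformality.} From $g\,\nabla u=\nabla^\perp\ov v$ one reads the a.e. scalar identities $g\,\partial_{x_1}u=-\partial_{x_2}\ov v$ and $g\,\partial_{x_2}u=\partial_{x_1}\ov v$; conjugating and multiplying by $g$ (using $g\,\ov g=1$) gives $g\,\partial_{x_1}v=\partial_{x_2}\ov u$ and $g\,\partial_{x_2}v=-\partial_{x_1}\ov u$, i.e. the companion identity $g\,\nabla v=-\nabla^\perp\ov u$. Hence $\operatorname{div}(g\,\nabla v)=\partial_{x_1}\partial_{x_2}\ov u-\partial_{x_2}\partial_{x_1}\ov u=0$, so $\operatorname{div}(g\,\nabla\Phi)=0$ in $\mathcal D'(D^2)$. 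Taking moduli in these identities and using $|g|=1$ yields $|\partial_{x_1}v|=|\partial_{x_2}u|$ and $|\partial_{x_2}v|=|\partial_{x_1}u|$, so $|\partial_{x_1}\Phi|^2=|\partial_{x_1}u|^2+|\partial_{x_1}v|^2=|\partial_{x_2}u|^2+|\partial_{x_2}v|^2=|\partial_{x_2}\Phi|^2$; moreover $\partial_{x_1}v\,\ov{\partial_{x_2}v}=(\ov g\,\partial_{x_2}\ov u)(-g\,\partial_{x_1}u)=-\partial_{x_1}u\,\ov{\partial_{x_2}u}$, whence $\partial_{x_1}\Phi\cdot\partial_{x_2}\Phi=\mathrm{Re}(\partial_{x_1}u\,\ov{\partial_{x_2}u})+\mathrm{Re}(\partial_{x_1}v\,\ov{\partial_{x_2}v})=0$. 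Thus $\Phi$ is weakly conformal a.e. on $D^2$. Uniqueness follows at once: a second solution $\Phi'=(u',v')$ has $u'=u$ by the uniqueness clause of Theorem~\ref{thm-one-singularity}, hence $\nabla^\perp(\ov{v'}-\ov v)=g\,\nabla(u'-u)=0$ and $v'-v$ is constant, which is normalised away by prescribing the value of $v$ at one boundary point.

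\medskip\noindent\emph{The conical singularity (main obstacle).} The genuinely delicate step is to identify the nature of the sole interior singularity $p$ and to exclude other singular points. Here I would pass to the conformal coordinate of (\ref{0-I.9}), in which $G$ becomes radial and $g$ reduces, up to a smooth unit-modulus factor, to $e^{id\theta}$ with $d=\pm1$ the multiplicity, and then analyse $\operatorname{div}(g\,\nabla u)=0$ near the origin by separation of variables. The admissible $W^{1,2}$ modes are governed by the indicial exponents of the resulting weighted operator, whose lowest positive value is $\sqrt2$; matching the leading homogeneous term to the angular structure of (\ref{0.2}) identifies the tangent map at $p$ with the Schoen--Wolfson cone of Maslov degree $d$ and homogeneity $\sqrt2$, so that $\Phi$ extends continuously across $p$ with an isolated conical singularity there, while the non-vanishing of $\nabla u$ off $p$ makes $\Phi$ an immersion on $D^2\setminus\{p\}$. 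The emergence of the critical exponent $\sqrt2$ -- equivalently of the optimal Wente constant -- is precisely what makes this the crux of the argument and explains why only the degree $\pm1$ cones can arise.
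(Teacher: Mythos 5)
Your proposal is correct and follows exactly the paper's own (very terse) proof: take $u$ from Theorem~\ref{thm-one-singularity} and define $v$ through $g\,\nabla u=\nabla^\perp\ov{v}$, which is possible since $g\,\nabla u$ is a divergence-free $L^2$ field on the simply connected disc. Your additional verifications (the identity $g\,\nabla v=-\nabla^\perp\ov{u}$ giving $\operatorname{div}(g\,\nabla\Phi)=0$, the pointwise conformality computation, and uniqueness up to the additive constant in $v$) are accurate elaborations of what the paper leaves implicit, and your sketch of the cone structure at $p$ is consistent with the Fourier analysis of Remark~\ref{Rmk: solutions-with-Fourier} and Lemma~\ref{lm-regularity}.
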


\medskip
First of all, we may assume w.l.o.g.~that the multiplicity of the singularity at $p$ is $+1$. By the conformal invariance mentioned in the preliminaries, by composing with the M\"obius transformation
\[
\phi(z)=\frac{z-p}{1-\ov{p}\,z},
\]
we can move the point $p$ to the origin and consider from now on $p=0$ and $g=e^{i\theta}$.
Let $G$ be a real valued  function such that
\be
\label{I.1}
\ov{g}\,\nabla g=i\,\nabla^\perp G\ .
\ee
We assume that $G\equiv 0$ on $\p D^2$. Hence $G$ satisfies in particular
\be
\label{I.2}
\lf\{
\begin{array}{l}
\ds\Delta G=\, 2\pi\, \delta_0\quad\mbox{ in }D^2\\[5mm]
\ds G=0\quad\mbox{ on }\p D^2\ .
\end{array}
\rg.
\ee
Classical results give 
\be
\label{I.3}
G(x)=\log |x|\ .
\ee 
We consider $\psi\in H^\frac{1}{2}(\partial D^2,{\C})$ and we are looking for a solution $u\in W^{1,2}(D^2,{\C})$ satisfying
\be
\label{I.4}
\lf\{
\begin{array}{l}
\ds\operatorname{div}(g\ \nabla u)=0\quad\mbox{ in }D^2\\[5mm]
\ds u=\psi\quad\mbox{ on }\p D^2\ .
\end{array}
\rg.
\ee
To that end, we introduce the following family of variational problems for $t\in [0,1)$:
\be
\label{I.5}
E_t(\psi):=\inf\lf\{
\begin{array}{c}
\ds L_t(u):=\frac{1}{2}\int_{D^2}|\nabla u|^2\ dx^2+\frac{t}{2}\int_{D^2}G\, \lf<i\,\nabla u,\nabla^\perp u\rg>\ dx^2\\[5mm]
\ds u\in W^{1,2}(D^2,{\C})\quad\mbox{ and }\quad u=\psi\quad\mbox{ on }\p D^2
 \end{array} \rg\}.
\ee
We have the first lemma
\begin{Lm}
\label{lm-E-t-psi}
We have
\be
\label{I.6}
\forall t\in [0,1)\quad E_t(\psi)>0
\ee
and $E_t(\psi)$ is achieved by a unique minimizer $u_t\in W^{1,2}_\psi(D^2,{\C})$ which satisfies the following Euler--Lagrange equation
\be
\label{I.7}
\lf\{
\begin{array}{l}
\ds-\Delta u_t\ds=t\ i\, \nabla^\perp G\cdot \nabla u_t\quad\mbox{ in }D^2\\[5mm]
\ds u_t\ds=\psi\quad\mbox{ on }\p D^2.
\end{array}
\rg.
\ee
\hfill $\Box$
\end{Lm}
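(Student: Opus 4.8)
The plan is to read (\ref{I.5}) as a strictly convex, coercive minimisation problem and run the direct method, the entire argument hinging on the sharp Wente inequality of Lemma~\ref{lma-1}. Set $Q(u):=\int_{D^2}G\,\langle i\,\nabla u,\nabla^\perp u\rangle\,dx^2$, so that $L_t(u)=\frac12\|\nabla u\|_{L^2}^2+\frac t2\,Q(u)$. Writing $u=u_1+i\,u_2$ one checks that $Q(u)=2\int_{D^2}G\,\nabla u_1\cdot\nabla^\perp u_2\,dx^2$ is a genuine real quadratic form of Jacobian type. Solving $-\Delta\phi=\nabla u_1\cdot\nabla^\perp u_2$ with $\phi=0$ on $\partial D^2$ and integrating by parts twice against $G=\log|x|$ gives $Q(u)=-4\pi\,\phi(0)$; the optimal $L^\infty$ Wente estimate $\|\phi\|_{L^\infty}\le\frac1{2\pi}\|\nabla u_1\|_{L^2}\|\nabla u_2\|_{L^2}$ then yields the borderline bound
\be
|Q(u)|\le \|\nabla u\|_{L^2}^2\qquad\mbox{for every }u\in W^{1,2}(D^2,\C)\ ,
\ee
which is exactly the inequality that degenerates as $t\to 1$.

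From this single bound I would extract all three structural facts. Since $|Q(u)|\le\|\nabla u\|_{L^2}^2$, we have $L_t(u)\ge\frac{1-t}2\|\nabla u\|_{L^2}^2\ge 0$ for every $u$ and every $t<1$. Taking the infimum over $W^{1,2}_\psi(D^2,\C)$ and using $\inf_{W^{1,2}_\psi}\|\nabla u\|_{L^2}^2=\|\nabla h\|_{L^2}^2$, with $h$ the harmonic extension of $\psi$, gives $E_t(\psi)\ge\frac{1-t}2\|\nabla h\|_{L^2}^2>0$ for non-constant $\psi$ (the constant case being trivial), which is (\ref{I.6}); the same inequality $L_t(u)\ge\frac{1-t}2\|\nabla u\|_{L^2}^2$ together with the fixed trace makes $L_t$ coercive on $W^{1,2}_\psi$. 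Finally, the second variation of $L_t$ along $w\in W^{1,2}_0$ equals $\|\nabla w\|_{L^2}^2+t\,Q(w)\ge(1-t)\|\nabla w\|_{L^2}^2>0$ for $w\neq 0$, so $L_t$ is strictly convex on the affine space $W^{1,2}_\psi$.

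With these in hand the direct method is routine. A minimising sequence is bounded in $W^{1,2}$ by coercivity, hence admits a weak limit $u_t\in W^{1,2}_\psi$ (the affine constraint being weakly closed); because $L_t$ is convex and strongly continuous — indeed $|Q(u)-Q(v)|=|B(u+v,u-v)|\le\|\nabla(u+v)\|_{L^2}\|\nabla(u-v)\|_{L^2}$, where $B$ is the symmetric form polarising $Q$ — it is sequentially weakly lower semicontinuous, so $u_t$ realises $E_t(\psi)$. Strict convexity forces uniqueness, since the midpoint of two distinct minimisers would have strictly smaller energy. For the Euler--Lagrange equation I compute $\frac{d}{ds}L_t(u_t+s\phi)\big|_{s=0}=\int\nabla u_t\cdot\nabla\phi\,dx^2+t\,B(u_t,\phi)=0$ for all $\phi\in W^{1,2}_0$; integrating by parts and using $\operatorname{div}(\nabla^\perp w)=0$ together with $\int_{D^2}G\,\nabla\phi\cdot\nabla^\perp w=\int_{D^2}\phi\,\nabla^\perp G\cdot\nabla w$ converts $B(u_t,\phi)$ into the zeroth-order pairing of $\phi$ against $i\,\nabla^\perp G\cdot\nabla u_t$. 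Separating real and imaginary parts then recovers precisely (\ref{I.7}).

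The only genuinely delicate ingredient is the \emph{sharp} constant in the Wente inequality: it is what places the problem exactly at the coercivity threshold, so that $t<1$ is both necessary and sufficient, and it is isolated in Lemma~\ref{lma-1}. Granting it, each step above is a standard direct-method argument. I would keep two minor technical points in mind: that the quadratic term $Q$, being a product of gradients, is only weakly continuous through its compensated-compactness/Jacobian structure — a subtlety that convexity lets us bypass entirely, since strong continuity plus convexity already give weak lower semicontinuity; and that the smoothness of $h$ (the harmonic extension of $\psi\in C^1$) together with $G=\log|x|\in\bigcap_{p<\infty}L^p$ makes all the integrations by parts above legitimate.
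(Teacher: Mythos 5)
Your proposal is correct and takes essentially the same route as the paper's own proof: both rest on solving the auxiliary Dirichlet problem with Jacobian right-hand side, representing $\int_{D^2}G\,\langle i\,\nabla u,\nabla^\perp u\rangle\,dx^2$ through the value of that auxiliary function at the singularity via $\Delta G=2\pi\,\delta_0$, and invoking Topping's optimal Wente constant to obtain $\lf|\int_{D^2}G\,\langle i\,\nabla u,\nabla^\perp u\rangle\,dx^2\rg|\le\|\nabla u\|_{L^2}^2$, hence the coercivity bound $L_t(u)\ge\frac{1-t}{2}\|\nabla u\|_{L^2}^2$, strict convexity, and existence/uniqueness of the minimizer satisfying \eqref{I.7}. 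The only difference is that you spell out the standard steps the paper labels as classical (weak lower semicontinuity via convexity, the second-variation argument, the integration by parts yielding the Euler--Lagrange equation, and the caveat that strict positivity of $E_t(\psi)$ requires $\psi$ non-constant).
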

\noindent{\bf Proof of lemma~\ref{lm-E-t-psi}.} For any $u\in W^{1,2}(D^2,{\C})$ we consider $b$  to be the unique solution in $W^{1,2}_0(D^2,{\C})$ of
\be
\label{I.8}
\lf\{
\begin{array}{l}
\ds\Delta b\ds=\lf<i\,\nabla u,\nabla^\perp u\rg>\quad\mbox{ in }D^2\\[5mm]
\ds b=0\quad\mbox{ on }\p D^2.
\end{array}
\rg.
\ee
Observe that 
\be
\label{I.9}
\ds\lf<i\,\nabla u,\nabla^\perp u\rg>= -2\,\p_{x_1}u_1\,\p_{x_2}u_2+2\,\p_{x_2}u_1\,\p_{x_1}u_2=-2\,\mbox{det}(\nabla u).
\ee
The main result in \cite{Top} gives
\be
\label{I.10}
\|b\|_{L^\infty(D^2)}\le \frac{1}{2\pi}\,\int_{D^2}|\nabla u|^2\ dx^2
\ee
Moreover since both $G$ and $b$ are equal to zero on $\p D^2$ we have
\be
\label{I.11}
\int_{D^2}G\, \lf<i\,\nabla u,\nabla^\perp u\rg>\ dx^2=\int_{D^2}b\ \Delta G\ dx^2=2\pi\ b(0).
\ee
Combining (\ref{I.10}) and (\ref{I.11}) gives in particular
\be
\label{I.12}
\lf|\int_{D^2}G\, \lf<i\,\nabla u,\nabla^\perp u\rg>\ dx^2\rg|\le \int_{D^2}|\nabla u|^2\ dx^2,
\ee
from which we deduce for any $u\in W^{1,2}_\psi(D^2,{\C})$
\be
\label{I.13}
 L_t(u):=\frac{1}{2}\int_{D^2}|\nabla u|^2\ dx^2+\frac{t}{2}\int_{D^2}G\, \lf<i\,\nabla u,\nabla^\perp u\rg>\ dx^2\ge \frac{(1-t)}{2}\,\int_{D^2}|\nabla u|^2\ dx^2\ .
\ee
Hence $ L_t$ is bounded from below and coercive in $W^{1,2}_\psi(D^2,{\C})$. Since $L_t$ is strictly convex on $W^{1,2}_\psi(D^2,{\C})$ there exists a unique minimizer. From classical calculus of variations
we obtain the Euler--Lagrange equation \eqref{I.7}. The lemma is proved. \hfill $\Box$

\medskip

\noindent{\bf Proof of Theorem~\ref{thm-one-singularity}.} 
Assume
\be
\label{I.14}
\limsup_{t\rightarrow 1}\int_{D^2}|\nabla u_t|^2\ dx^2=+\infty
\ee
Hence there exists a sequence $t_k<1$ with $t_k\rightarrow 1$ such that, if we denote $u_k:=u_{t_k}$, 
\be
\label{I.15}
\lim_{k\rightarrow +\infty}\int_{D^2}|\nabla u_k|^2\ dx^2=+\infty
\ee
Let $v_k$ given by
\be
\label{I.16}
v_k:=\frac{u_k}{\|\nabla u_k\|_{L^2(D^2)}}.
\ee
We will be done if we can show that this is incompatible with the fact that the boundary values of $v_k$ converge to zero. This is precisely the content of the following Lemma, which finishes the proof of Theorem~\ref{thm-one-singularity}. \qed

\begin{Lm}
\label{lm:main-lemma-disk}
Let $v_k$ be a sequence of solutions to \eqref{I.7} with $t_k \nearrow 1$ such that
\begin{equation}
\label{eq: convergence-traces-H12}
    \| v_k \|_{H^{1/2}(\partial D^2)} \xrightarrow{k \to \infty} 0.
\end{equation}
Then $\| \nabla v_k \|_{L^2(D^2)} \to 0$.
\end{Lm}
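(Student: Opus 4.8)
The plan is to argue by contradiction, following the Ginzburg--Landau blow-up scheme of \cite{BBH, ST}. Suppose $\|\nabla v_k\|_{L^2(D^2)}\not\to0$. After passing to a subsequence and dividing by $\|\nabla v_k\|_{L^2(D^2)}$ (legitimate since the interior equation in \eqref{I.7} is linear and homogeneous), I may assume $\|\nabla v_k\|_{L^2(D^2)}=1$ for all $k$; the hypothesis $\|v_k\|_{H^{1/2}(\partial D^2)}\to0$ is preserved, as is the vanishing of the degenerate boundary flux $\int_{\partial D^2}v_k\,dg=0$, which the $v_k$ inherit from the solvability condition \eqref{I.000} satisfied by their boundary data. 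It then suffices to prove $v_k\to0$ strongly in $W^{1,2}$, contradicting $\|\nabla v_k\|_{L^2}=1$.

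First I would identify the weak limit. Extract $v_k\rightharpoonup v_\infty$ in $W^{1,2}(D^2,\C)$. Writing \eqref{I.7} in divergence form as $\operatorname{div}(g\,\nabla v_k)=i\,(1-t_k)\,g\,\nabla^\perp G\cdot\nabla v_k$, the right-hand side carries the vanishing factor $1-t_k$; testing against $\zeta\in C^\infty_c(D^2\setminus\{p\})$ and using $t_k\to1$ together with the weak convergence of $\nabla v_k$, one passes to the limit and gets $\operatorname{div}(g\,\nabla v_\infty)=0$ on $D^2\setminus\{p\}$. Since $v_\infty\in W^{1,2}$ and a point has zero $W^{1,2}$-capacity, the singularity is removable; the weak continuity of the trace gives $v_\infty=0$ on $\partial D^2$, and the uniqueness statement of Theorem~\ref{thm-one-singularity} (with $\psi\equiv0$) forces $v_\infty\equiv0$. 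Hence the unit Dirichlet energy is entirely lost in the limit, and the task reduces to showing that the weak-$*$ limit $\mu$ of the measures $|\nabla v_k|^2\,dx$ vanishes.

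Next I would localize $\mu$ and destroy it. Away from $p$ the coefficient $g$ is smooth, $v_k\rightharpoonup0$, and the right-hand side tends to $0$; linear elliptic energy estimates in the interior and, using $\|v_k\|_{H^{1/2}(\partial D^2)}\to0$, in a boundary collar give $\nabla v_k\to0$ in $L^2_{loc}(\overline{D^2}\setminus\{p\})$, so $\mu$ is concentrated at the single point $p$. To exclude an atom there I would invoke the Marcinkiewicz control announced in the introduction: exploiting the Jacobian structure $\nabla^\perp G\cdot\nabla v_k=\{G,v_k\}$ of the right-hand side, together with the Wente-type bounds \eqref{I.10}--\eqref{I.12}, one obtains $\|\nabla v_k\|_{L^{2,\infty}(D^2)}\to0$. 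An $L^{2,\infty}$ bound alone cannot rule out an atom, but it confines any concentration to the single borderline angular profile whose radial part $r^{\sqrt{1-t_k}}$ has exponent $\sqrt{1-t_k}\to0$ --- exactly the mode realizing the optimal Wente constant. That profile is finally eliminated by the vanishing flux: $\int_{\partial D^2}v_k\,dg=0$ forces the coefficient of this degenerate mode to vanish identically, so it carries no energy and no atom can form at $p$. Thus $\mu=0$, $\nabla v_k\to0$ in $L^2$, and the contradiction is reached.

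\textbf{Main obstacle.} The crux is this last step, which reflects that $L_1$ is convex but not coercive, the defect being measured by the optimal Wente constant of Lemma~\ref{lma-1}. Concretely, two points are delicate. First, proving the $L^{2,\infty}$ estimate with the singular weight $G=\log|x|$: here $\nabla^\perp G\in L^{2,\infty}$ and $\nabla v_k\in L^2$ only, so the compensated-compactness gain must be run in Lorentz spaces rather than in the classical $W^{1,2}\times W^{1,2}$ Wente framework. Second, turning the vanishing of the boundary flux into the pointwise absence of the degenerate mode near $p$, where the property $\int_{\partial D^2}v_k\,dg=0$ (equivalently \eqref{I.000}) is indispensable: without it the explicit solutions $\lambda_k\,r^{\sqrt{1-t_k}}e^{-i\theta}$, with amplitude $\lambda_k=(1-t_k)^{1/4}$, have traces tending to $0$ in $H^{1/2}(\partial D^2)$ yet Dirichlet energy of order one, so the conclusion genuinely rests on this orthogonality --- the same mechanism that produces the blow-up rate \eqref{0.14}--\eqref{0.15}.
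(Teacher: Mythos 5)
Your proposal has the right skeleton (contradiction, normalization, the decisive role of the flux condition $\int_{\partial D^2}v_k\,dg=0$), and your remark that the statement would be false without that condition --- witnessed by $\lambda_k\,r^{\sqrt{1-t_k}}e^{-i\theta}$, $\lambda_k=(1-t_k)^{1/4}$ --- is exactly right: the paper's proof also invokes \eqref{I.000}, at step \eqref{I.67}. However, the core of your argument, excluding an energy atom at $p$, rests on two assertions that are never proved and do not follow from what you cite. First, the claim $\|\nabla v_k\|_{L^{2,\infty}(D^2)}\to 0$: the bounds \eqref{I.10}--\eqref{I.12} control the auxiliary potential (the function $b$, resp.\ $c_k$) in $L^\infty$, not $\nabla v_k$ in any Lorentz norm, and no compensated-compactness statement with $\nabla^\perp G\in L^{2,\infty}$ and $\nabla v_k\in L^2$ yields decay of $\|\nabla v_k\|_{L^{2,\infty}}$; indeed establishing that decay is essentially equivalent to the lemma itself, since it fails precisely on the degenerate mode you must exclude. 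Second, even granting it, the step ``an $L^{2,\infty}$ bound confines any concentration to the borderline profile'' is a heuristic, not an argument: to make it rigorous you would have to decompose $v_k$ into angular Fourier modes and show that for every $j\neq -1$ the $L^2$ and $L^{2,\infty}$ norms of the gradient of the $j$-th mode are uniformly comparable --- at which point you are reconstructing, in a weaker form, the computation the paper performs directly.

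The paper's actual mechanism is quantitative and self-contained, and none of it appears in your proposal: test \eqref{I.7} against $v_k$ to get the identity \eqref{I.48}, whose boundary term vanishes because $\partial_\nu v_k$ is uniformly bounded in $H^{-1/2}(\partial D^2)$ while the traces go to zero in $H^{1/2}$; then compute $c_k(0)$ \emph{exactly} in Fourier modes, \eqref{I.57}, and subtract it from the energy \eqref{I.58}. The difference is a sum of nonnegative terms with weights $(1+\frac{1}{j})$ that vanish only at $j=-1$, see \eqref{I.59}--\eqref{I.60}; the surviving $j=-1$ mode solves the explicit ODE \eqref{I.64}, whose only $W^{1,2}$ solution vanishing at $r=1$ (this is where \eqref{I.000} enters via \eqref{I.67}) is identically zero. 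It is this Fourier identity for $c_k(0)$ --- not any Lorentz-space estimate --- that isolates the mode $j=-1$. Separately, your identification of the weak limit via ``the uniqueness statement of Theorem~\ref{thm-one-singularity} with $\psi\equiv 0$'' is circular, since the present lemma is part of the proof of that theorem; the non-circular route is the equality case of the optimal Wente inequality (Lemma~\ref{lma-1} together with Topping's rigidity), which is how the paper argues in the analogous situations --- though in fact the paper's proof of this lemma never needs to identify the weak limit at all.
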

\begin{Rm}
    An analogous result for the $L^\infty$ norm is given in Lemma \ref{lem: vanishing-Linfty-norm}.
\end{Rm}
\noindent{\bf Proof of Lemma~\ref{lm:main-lemma-disk}.} 
We argue by contradiction and suppose that $\| \nabla v_k \|_{L^2(D^2)}$ is bounded away from zero along a subsequence. We can normalize and assume in fact that $\| \nabla v_k \|_{L^2(D^2)} = 1$, since this will increase the $H^{1/2}$ norms of the boundary data by at most a constant factor.
 We extract a subsequence still denoted $v_k$ such that $v_k$ weakly converges in $W^{1,2}(D^2,{\C})$ to a limit $v_\infty$.





Let $c_k$ be the solution of
\begin{align}
    \begin{cases}
        \Delta c_k=\langle i\nabla^\perp v_k, \nabla v_k\rangle=\operatorname{div}(\langle i\nabla^\perp v_k, v_k\rangle)&\text{ in }D^2\\
        c_k=0&\text{ on }\partial D^2.
    \end{cases}
\end{align}
Integrating the equation satisfied by $v_k$ against $v_k$ we obtain
\be
\label{I.47}
-\,\int_{D^2}v_k\cdot\Delta v_k\ dx^2=t_k\, \int_{D^2}\langle i\,v_k,\nabla v_k\rangle\cdot \nabla^\perp G dx^2=t_k\int_{D^2}G\langle i\nabla^\perp v_k, \nabla v_k\rangle.
\ee
This gives
\be
\label{I.48}
\begin{array}{l}
\ds-\,\int_{\p D^2} v_k\cdot\p_\nu v_k\ d\theta+\int_{D^2}|\nabla v_k|^2\ dx^2=\,t_k\, \int_{D^2} G\ \operatorname{div}\langle i\,v_k,\nabla^\perp v_k\rangle\ dx^2\\[5mm]
\ds=\,t_k\, \int_{D^2} G\ \Delta c_k\ dx^2=\,t_k\, 2\pi\, c_k(0)
\end{array}
\ee
Now we notice that $\partial_\nu v_k$ is uniformly bounded in $H^{-\frac{1}{2}}(\partial D^2)$: to see this let $\varphi\in H^\frac{1}{2}(\partial D^2)$ and let $\tilde{\varphi}$ be an extension of $\varphi$ in $W^{1,2}(D^2)$ supported on $\overline{D^2} \setminus D_{1/2}^2$ which has positive distance from the origin, and such that $\lVert \tilde{\varphi}\rVert_{W^{1,2}(D^2)}\leq K\lVert \varphi\rVert_{H^\frac{1}{2}(\partial D^2)}$ (notice that $K$ and $N$ can be chosen  independently from $\varphi$). Then we have
\begin{align}
    \int_{\partial D^2}\varphi\partial_\nu v_k=&\int_{D^2}\operatorname{div}(\tilde{\varphi}\nabla v_k)=\int_{D^2}\nabla\tilde{\varphi}\nabla v_k+\tilde{\varphi}\Delta v_k\\
    \nonumber
    =&\int_{D^2}\nabla\tilde{\varphi}\nabla v_k-it_k\tilde{\varphi}\nabla^\perp G\nabla v_k\leq C\lVert \tilde{\varphi}\rVert_{W^{1,2}(D^2)}\lVert \nabla v_k\rVert_{L^2(D^2)}\\
    \nonumber
    \leq&CK\lVert \varphi\rVert_{H^\frac{1}{2}(\partial D^2)}\lVert \nabla v_k\rVert_{L^2(D^2)}\ ,
\end{align}
so that
\begin{align}
    \lVert \partial_\nu v_k\rVert_{H^{-\frac{1}{2}}(\partial D^2)}\leq C \lVert \nabla v_k\rVert_{L^2(D^2)}.
\end{align}
Since by construction $\lVert \nabla v_k\rVert_{L^2(D^2)}=1$ for any $k\in \mathbb{Z}$, we conclude that $\partial_\nu v_k$ is uniformly bounded in $H^{-\frac{1}{2}}(\partial D^2)$.\\
Together with assumption \eqref{eq: convergence-traces-H12} this implies that the first term in \eqref{I.48} converges to zero. We deduce that
\be
\label{I.52}
\lim_{k\to\infty}\left(\int_{D^2}|\nabla v_k|^2 \ dx^2- 2\pi\, c_k(0)\right)=0\ .
\ee
We have
\be
\label{I.54}
\begin{array}{rl}
\ds{c}_k(0)&\ds=\frac{1}{2\pi}\int_{0}^{1}\int_0^{2\pi}\log r\ \lf[\frac{1}{r}\lf<i\,\frac{\p v_k}{\p r},\frac{\p v_k}{\p \theta} \rg>-\frac{1}{r}\lf<i\,\frac{\p {v}_k}{\p \theta},\frac{\p v_k}{\p r}\rg>\rg]\ r\, dr\, d\theta\\[5mm]
\ds\ &\ds=\frac{1}{2\pi}\int_{0}^{1}\int_0^{2\pi}\log r\ \lf[\frac{\p}{\p r}\lf<i\, v_k,\frac{\p v_k}{\p \theta} \rg>-\frac{\p}{\p\theta}\lf<i\, v_k,\frac{\p v_k}{\p r}\rg> \rg]\  dr\, d\theta\\[5mm]
\ds\ &\ds=\frac{-1}{2\pi}\int_{0}^{1}\int_0^{2\pi}\ \lf<i\, \frac{v_k-\ov{v}_k}{r},\frac{\p v_k}{\p \theta} \rg>  dr\, d\theta
\end{array}
\ee
We decompose $v_k$ in Fourier
\[
v_k=\ov{v}_k+\sum_{j\ne 0} v_k^j\ 
\]
where $v_k^j(r,\theta)=w_k^j(r)\ e^{ij\theta}$, and (\ref{I.54}) becomes
\be
\label{I.55}
\begin{array}{rl}
\ds{c}_k(0)&\ds=-\frac{1}{2\pi}\int_{0}^{1}\sum_{j\ne 0}\int_0^{2\pi}\ \lf<i\, \frac{v^j_k}{r},\frac{\p v^j_k}{\p \theta} \rg>   d\theta\, dr\ .
\end{array}
\ee
Notice that for any $j\in \mathbb{Z}$ we have $\frac{\partial v_k^j}{\partial \theta}= ij v_k^j$. Therefore there holds
\begin{align}
    \int_0^{2\pi}\left\langle i \frac{v_k^j}{r},\frac{\partial v_k^j}{\partial \theta}\right\rangle d\theta=\frac{1}{j}\frac{1}{r}\int_0^{2\pi}\left\lvert \frac{\partial v_k^j}{\partial\theta}\right\rvert^2d\theta   
\end{align}
Hence
\be
\label{I.57}
{c}_k(0)=-\frac{1}{2\pi}\sum_{j\in {\Z}^\ast}\frac{1}{j}\, \int_0^{+\infty}\int_0^{1}\frac{1}{r^2}\,\lf|\frac{\p v^j_k}{\p\theta}\rg|^2\ d\theta \, r\, dr\ .
\ee
On the other hand we have
\be
\label{I.58}
\int_{D^2}|\nabla v_k|^2\ dx^2=\int_{D^2}|\nabla \ov{v}_k|^2\ dx^2+\sum_{j\in {\Z}^\ast}\, \int_0^{1}\int_0^{2\pi}\lf[\lf|\frac{\p v^j_k}{\p r}  \rg|^2+\frac{1}{r^2}\,\lf|\frac{\p v^j_k}{\p\theta}\rg|^2\rg]\ d\theta \, r\, dr\ .
\ee
Combining (\ref{I.52}), (\ref{I.57}) and (\ref{I.58}) is giving
\be
\label{I.59}
\limsup_{k\rightarrow +\infty}\int_{D^2}|\nabla \ov{v}_k|^2\ dx^2+\sum_{j\in {\Z}^\ast}\, \int_0^{1}\int_0^{2\pi}\lf[\lf|\frac{\p v^j_k}{\p r}  \rg|^2+\lf(1+\frac{1}{j}\rg)\frac{1}{r^2}\,\lf|\frac{\p v^j_k}{\p\theta}\rg|^2\rg]\ d\theta \, r\, dr\le 0.
\ee
We deduce
\be
\label{I.60}
\lim_{k\rightarrow +\infty}\int_{D^2}\lf|\frac{\p v_k}{\p r}  \rg|^2\ dx^2=0\quad\mbox{ and }\quad \lim_{k\rightarrow +\infty}\int_{D^2}\frac{1}{r^2}\,\lf|\frac{\p v_k}{\p \theta} - \frac{\p v^{-1}_k}{\p \theta} \rg|^2\ dx^2=0.
\ee
Next we claim that if assumption \eqref{I.000} holds, then $ v_k^{-1} \equiv 0$.\\
Let $v_k^-:=v_k^{-1}$ and $w_k^-:=w_k^{-1}$. By assumption we have 
\begin{align}
\label{I.64}
    \ds\ddot{w}^-_k+r^{-1}\,\dot{w}^-_k-r^{-2}\,(1-t_k)\,w^-_k=0
\end{align}
We introduce the change of variables $r=\exp(s)$ for $s\in [-\infty,0]$ and  $Y_k(s):=w^-_k(\exp(s))$. We have \[
\dot{Y}_k(s)= r\, \dot{w}_k^-(r)\quad\mbox{ and }\quad\ddot{Y}_k(s)= r^2\ \ddot{w}_k^-(r)+ r\, \dot{w}^-_k\ .
\]
Hence (\ref{I.64}) becomes
\be
\label{I.65}
\ds\ddot{Y}_k(s)-(1-t_k)\, Y_k(s)=0.
\ee
In particular we have
\[
Y_k(s)= A_k\ e^{\sqrt{1- t_k}\, s}+B_k\ e^{-\sqrt{1- t_k}\, s}\ .
\]
since $v_k\in W^{1,2}(D^2)$, we have $v_k^{-1}\in W^{1,2}$, therefore $B_k=0$ for any $k$.
Now \eqref{I.000} implies that
\be
\label{I.67}
\int_{\p D^2}\ v_k(1, \theta)\ e^{ i\theta}\ d\theta=0\quad ,
\ee
therefore $Y_k(0)=0$, and we conclude that $Y_k\equiv 0$, so that $v_k^{-1}\equiv 0$. This proves the claim.\\
Thus (\ref{I.60}) implies that if \eqref{I.000} holds, then $v_k$ converges strongly to $0$ in $W^{1,2}(D^2)$. This contradicts
\be
\label{I.68}
\|\nabla v_k\|_{L^2}=1\ ,
\ee
as desired.
Uniqueness follows from Lemma \ref{lma-unique}.\\
The converse implication has been discussed in Remark \ref{rm-I.1}. 
\hfill $\Box$

\begin{Rm}
\label{Rmk: solutions-with-Fourier}
    We can construct more explicit solutions to \eqref{I.00} by means of Fourier decomposition. Assume fist that $p=0$, and that the singularity has degree $+1$.
    Write the boundary datum $\psi$ as a sum of its Fourier modes:
    \begin{align}
        \psi(\theta)=\sum_{k\in \mathbb{Z}}\hat{\psi}(k)e^{ik\theta}.
    \end{align}
    Assumption \eqref{I.000} implies that $\hat{\psi}(-1)=0$. Then the unique solution of \eqref{I.00} in $W^{1,2}(D^2)$ is given by
    \begin{align}
        u(r, \theta)=\sum_{\substack{k\in \mathbb{Z}\\ k\neq -1}}\hat{\psi}(k)r^{\sqrt{k(k+1)}}e^{ik\theta}.
    \end{align}
    The case of degree $-1$ can be obtained by conjugation.
    Finally if $p\neq 0$, let $\Phi$ be a conformal diffeomorphism of $D^2$ mapping $p$ to $0$. The construction above gives a representation of $u\circ\Phi$ in terms of the Fourier coefficients of $\psi\circ\Phi$.
\end{Rm}

\section{The general case}
\label{sec:general-case}
\reset
\subsection{Preliminary observations}
Let $g$ be an harmonic map into $S^1$ with isolated singularities and $G$ such that
\[
\ov{g}\,\nabla g=i\,\nabla^\perp G
\]
We consider functions $f$ such that
\be
\label{II-01}
d f\wedge dG=0\ .
\ee
If $x$ is a regular point for $G$ we have locally $f(x)=F(G(x))$. Let $u_j:= f_j(x)\ g^j$ where $f_j$ satisfies (\ref{II-01}). We compute around a regular point of $G$
\be
\label{II-02}
\begin{array}{l}
\ds\Delta u_j=\Delta\lf(F_j(G)\,g^j\rg)=\Delta\lf(F_j(G)\rg)\,g^j+2\,\nabla \lf(F_j(G)\rg)\cdot\nabla g^j+F_j(G)\,\Delta g^j\\[5mm]
\ds\quad=\ddot{F}_j(G)\,|\nabla G|^2\,g^j-j^2\, F_j(G)\, g^j\,|\nabla G|^2
\end{array}
\ee
where we used respectively $\Delta G=0$ at that point, $\nabla G\cdot\nabla g=0$, $|\nabla g|=|\nabla G|$ and $g^k$ is harmonic into $S^1$ and satisfies
\be
\label{II-03}
-\Delta g^j= j^2\,g^j \,|\nabla g|^2\ .
\ee
From (\ref{II-02}) we obtain for $t\in{\R}$
\be
\label{II-04}
\begin{array}{rl}
\ds\Delta u_j+i\,t\,\nabla^\perp G\cdot\nabla u_j&\ds=\ddot{F}_j(G)\,|\nabla G|^2\,g^j-j^2\, F_j(G)\, g^j\,|\nabla G|^2+i\,j\,t\, g^{j-1}\, F_j(G)\,\nabla^\perp G\cdot\nabla g\\[5mm]
\ds\quad&\ds=\ddot{F}_j(G)\,|\nabla G|^2\,g^j-j^2\, F_j(G)\, g^j\,|\nabla G|^2+j\,t\, g^{j-1}\, F_j(G)\,\ov{g}\,\nabla g\cdot\nabla g\\[5mm]
\ds\quad&\ds=\ddot{F}_j(G)\,|\nabla G|^2\,g^j-j^2\, F_j(G)\, g^j\,|\nabla G|^2-j\,t\, g^{j}\, F_j(G)\,\nabla \ov{g}\cdot\nabla g\\[5mm]
\ds\quad&\ds=\lf[ \ddot{F}_j(G) -(j^2+t\,j)\, F_j(G)  \rg]\,|\nabla G|^2\,g^j
\end{array}
\ee
We have obtained the following lemma
\begin{Lm}
\label{lm-II-01}
Let $u_j(x):= F_j(G(x))\ g^j(x)$ where $g$ is an $S^1$ harmonic map with isolated singularities and 
\[
\ov{g}\,\nabla g=i\,\nabla^\perp G\ .
\]
Away from the singularities and at the regular points of $G$, $u_j$ solves
\be
\label{II.05}
\Delta u_j+i\,t\,\nabla^\perp G\cdot\nabla u_j=0
\ee
for $t\in (0,1)$ if and only if
\be
\label{II.06}
F_j(s)= A_j \,e^{\sqrt{j^2+t\,j}\,s}+B_j \,e^{-\sqrt{j^2+t\,j}\,s}
\ee
\hfill $\Box$
\end{Lm}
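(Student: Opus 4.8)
The plan is to read the lemma off directly from the identity established just above its statement,
\[
\Delta u_j+i\,t\,\nabla^\perp G\cdot\nabla u_j=\lf[\ddot F_j(G)-(j^2+t\,j)\,F_j(G)\rg]\,|\nabla G|^2\,g^j,
\]
which is the last line of (\ref{II-04}). The only facts feeding that computation are $\Delta G=0$ at regular points away from the singularities, the orthogonality $\nabla G\cdot\nabla g=0$, the modulus identity $|\nabla g|=|\nabla G|$, and the eigenvalue equation (\ref{II-03}) for $g^j$; all of these are consequences of $\ov g\,\nabla g=i\,\nabla^\perp G$ together with $|g|=1$, so the displayed identity holds wherever $u_j$ is smooth. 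Thus the substance of the lemma is already contained in (\ref{II-02})--(\ref{II-04}), and what remains is to interpret the vanishing of the right-hand side.

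First I would note that at a regular point of $G$ lying away from the singularities of $g$ one has $|\nabla G|^2\neq 0$ and $|g^j|=1$, so the scalar factor $|\nabla G|^2\,g^j$ is nowhere zero there. Hence equation (\ref{II.05}) holds at such a point if and only if the bracket $\ddot F_j(G)-(j^2+t\,j)\,F_j(G)$ vanishes at the corresponding value $s=G(x)$.

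Next I would upgrade this pointwise vanishing into a genuine ordinary differential equation for $F_j$. By the conformal representation (\ref{0-I.9}), on each connected component of $\{G\neq 0\}$ the function $G$ equals, up to a biholomorphism, $2\pi\log|x|$, so its range is a full interval and every value $s$ in that interval is attained at a regular point. Therefore the vanishing of the bracket for all $x$ forces
\[
\ddot F_j(s)-(j^2+t\,j)\,F_j(s)=0
\]
for all $s$ in the relevant interval. This is a linear second order ODE with constant coefficients whose characteristic roots are $\pm\sqrt{j^2+t\,j}$. For $t\in(0,1)$ and $j\in\Z^\ast$ one checks that $j^2+t\,j=j(j+t)>0$ (the only delicate case being $j=-1$, where $j(j+t)=1-t>0$), so the roots are real and distinct and the general solution is exactly $F_j(s)=A_j\,e^{\sqrt{j^2+t\,j}\,s}+B_j\,e^{-\sqrt{j^2+t\,j}\,s}$, giving both implications of the claimed equivalence.

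Since the computation is already performed, there is no serious obstacle here. The one point deserving care is the passage from ``the bracket vanishes as a function of $x$'' to ``$F_j$ solves the ODE on an interval'': this is precisely where the interval-range (submersion) property of $G$ from (\ref{0-I.9}) is invoked, together with the sign check $j^2+t\,j>0$ that keeps $\sqrt{j^2+t\,j}$ real for every nonzero integer $j$.
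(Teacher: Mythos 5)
Your proposal is correct and takes essentially the same route as the paper: there the lemma is simply read off from the computation (\ref{II-02})--(\ref{II-04}) (the text preceding it says ``We have obtained the following lemma''), exactly as you do, followed by solving the resulting constant-coefficient ODE. Your extra care on the ``only if'' direction --- the nonvanishing of $|\nabla G|^2\,g^j$ at regular points, the interval range of $G$ via the conformal representation, and the positivity check $j^2+t\,j=j(j+t)>0$ for $j\in\Z^\ast$, $t\in(0,1)$ --- merely makes explicit what the paper leaves implicit.
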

\begin{Rm}
\label{rm-II-01} The previous lemma is a generalisation of the Fourier decomposition of the solutions in the axially symmetric case of the previous section (see Remark \eqref{Rmk: solutions-with-Fourier} and (\ref{I.65}) for $j= -1$). 
\end{Rm}
From now on we shall be considering the following setting.
Let $(d_l)_{l=1\cdots N}$ such that $d_l=\pm1$. Let $p_i$ be $N$ distinct points in $D^2$ and let $G$ be the Green's function solution to 
\be
\label{II.001}
\lf\{
\begin{array}{rl}
\ds\Delta G&\ds=2\pi\ \sum_{l=1}^N\ d_l\, \delta_{p_l}\quad\mbox{in }{\mathcal D}'(D^2)\\[5mm]
\ds G&\ds=0\quad\mbox{ on }\quad\p D^2\ .
\end{array}
\rg.
\ee
Assume that for ${\mathcal H}^1-$almost every $t\in{\R}$ each of the connected components $\om_i(t)$ of $\{ G(x)\le t\}$  satisfy
\be
\label{II.002}
\int_{\p \om_j(t)}\frac{\p G}{\p \nu}=2\pi\ ,
\ee
 and each connected component of $G\ne 0$ is a disc, that is, according to our previous definition, $G$ is {\it admissible}. We also choose $\psi\in H^\frac{1}{2}(\partial D^2,{\C})$.\\
 
 We introduce the following space
\[
\mathfrak{H}:=\lf\{
\begin{array}{l}
 u\in W^{1,(2,\infty)}(D^2,{\C})\ ;\ \mbox{ on each connected component $\om$ of $G\ne 0$ }\\[3mm]
 \exists\  \ti{u}\ \in W^{1,2}(\om,{\C})\ \mbox{  and }\ A^\om\in {\C}\quad\mbox{ s.t. }u=\ti{u}+ A^\om\,g^{-1}
\end{array}
\rg\}
\]
 \begin{Lm}
\label{lm-exp}
Let $t\in [0,1]$. Let $u_t\in W^{1, 2}(D^2)$ ($u_1\in \mathfrak{H}$ if $t=1$) be a solution to
\be
\label{II.34}
\begin{array}{rl}
\ds-\Delta u_t&\ds=t\ i\, \nabla^\perp G\cdot \nabla u_t\quad\mbox{ in }D^2
\end{array}
\ee
where $G$ satisfies the assumptions above.
Then $u_t$ satisfies
\be
\label{II.35}
\begin{array}{l}
\ds u_t(x)=\lf\{
\begin{array}{l}
\ds\sum_{j\in \Z} A^\om_j \ e^{-\,\sqrt{j^2+t\,j}\, G}\, g^j\quad\mbox{ on every connected component $\om$ where $G>0$}\\[5mm]
\ds\sum_{j\in \Z} A^\om_j \ e^{\sqrt{j^2+t\,j}\, G}\, g^j\quad\mbox{ on every connected component $\om$ where $G<0$}\
\end{array}\rg.
\end{array}
\ee
\hfill $\Box$
\end{Lm}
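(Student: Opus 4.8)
The plan is to localize the analysis to a single connected component $\om$ of $\{G\neq 0\}$ and reduce it, via the conformal invariance recorded in the preliminaries, to the rotationally symmetric situation already treated in Section~\ref{sec:one-point}. First I would fix such a component $\om$; by admissibility (Definition~\ref{df-I.1}) it is a topological disc containing exactly one $p_l$, and by the conformal invariance of the system \eqref{0-I.0} there is a biholomorphism $\phi:D^2\to\om$ with $\phi(0)=p_l$ for which $G\circ\phi=d_l\,\log|x|$, as in \eqref{0-I.9}. Since both $\Delta u_t$ and the Jacobian-type term $\nabla^\perp G\cdot\nabla u_t$ acquire the same conformal factor $|\phi'|^2$ under a holomorphic change of variables, the equation \eqref{II.34} is conformally invariant; hence $\ti u:=u_t\circ\phi$ solves the same equation on $D^2$ with $G$ replaced by the radial function $d_l\log|x|$ and $g$ replaced by the standard map $e^{i d_l\theta}$. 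Note that $d_l=+1$ gives $G<0$ on $\om$ and $d_l=-1$ gives $G>0$, matching the dichotomy in \eqref{II.35}.

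Next I would exploit the rotational symmetry of the model. Away from the origin the coefficients are smooth, so by interior elliptic regularity $\ti u$ is smooth on $D^2\setminus\{0\}$ and may be expanded in angular Fourier modes $\ti u=\sum_{j\in\Z}w_j(r)\,e^{ij\theta}$. Because $\nabla^\perp(d_l\log|x|)\cdot\nabla(\cdot)=d_l\,r^{-2}\,\partial_\theta(\cdot)$ acts diagonally in this basis, the equation decouples into the Euler ODEs
\be
w_j'' + \frac{1}{r}\,w_j' - \frac{j^2+t\,j}{r^2}\,w_j = 0 ,
\ee
whose solutions are $w_j(r)=A_j\,r^{\sqrt{j^2+tj}}+B_j\,r^{-\sqrt{j^2+tj}}$ away from the resonant modes where $j^2+tj=0$ and a logarithm appears. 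This is precisely Lemma~\ref{lm-II-01} read through the substitution $r=e^s$, generalizing \eqref{I.65}. Rewriting in terms of $G$, each mode reads $w_j\,e^{ij\theta}=\left(A_j\,e^{\sqrt{j^2+tj}\,G}+B_j\,e^{-\sqrt{j^2+tj}\,G}\right)g^j$, so the full solution is a superposition of the two families appearing in \eqref{II.35}.

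The selection of the correct branch is where the regularity hypothesis enters, and this is the step I expect to require the most care. For $t\in[0,1)$ one checks $j^2+tj>0$ for every $j\neq 0$, while a direct computation shows that $r^{-\sqrt{j^2+tj}}e^{ij\theta}\notin W^{1,2}$ near the origin, as does the logarithm arising at $j=0$; the hypothesis $u_t\in W^{1,2}$ therefore forces every $B_j=0$, leaving exactly the $A_j$-series, which on a component where $G<0$ (resp.\ $G>0$) retains $e^{+\sqrt{j^2+tj}\,G}$ (resp.\ $e^{-\sqrt{j^2+tj}\,G}$). At the threshold $t=1$ the mode $j=-1$ becomes resonant, $j^2+j=0$, so its bounded solution degenerates to the constant multiple $A_{-1}\,g^{-1}$, which lies in $W^{1,(2,\infty)}\setminus W^{1,2}$ and is exactly the extra term permitted by the definition of $\mathfrak H$; the remaining negative-power and logarithmic branches are excluded either because their gradients fail to lie in $L^{2,\infty}$ or because they cannot be written as a $W^{1,2}$ function plus a multiple of $g^{-1}$. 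Thus $u_1\in\mathfrak H$ plays at $t=1$ the same branch-selecting role that $W^{1,2}$ plays for $t<1$. Finally I would transport the expansion back through $\phi$, using that $e^{\pm\sqrt{j^2+tj}\,G}g^j$ is defined intrinsically on $\om$, to obtain \eqref{II.35} on each connected component.
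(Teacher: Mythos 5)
Your proposal is correct and follows essentially the same route as the paper's own proof: reduce to the radial model on $D^2$ via the Riemann mapping and the conformal invariance of the system, decompose into angular Fourier modes, solve the resulting Euler ODEs $w_j''+r^{-1}w_j'-r^{-2}(j^2+tj)w_j=0$, and use the hypothesis $u_t\in W^{1,2}$ (resp.\ $u_1\in\mathfrak{H}$) to kill the singular branch of each mode. If anything, your treatment of the resonant modes $j^2+tj=0$ (the logarithmic solutions at $j=0$ and, when $t=1$, at $j=-1$) is more explicit than the paper's, which disposes of all $B_j$ in a single sentence.
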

\noindent{\bf Proof of lemma~\ref{lm-exp}.}
Let $\om_+$ be a connected component of $G>0$.
By the assumptions on $G$, $\omega_+$ is a topological disc with Lipschitz boundary. By the conformal invariance of the problem we see that it is enough to proof the Lemma for the case $\omega_+=D^2$, with a negative singularity at the origin (in fact by the Riemann mapping theorem there exists a conformal map sending $D^2$ to $\omega_+$ and $0$ to $p_{\omega_+}$, the singularity point in $\omega_+$). In this case one can decompose $u_t$ into Fourier frequencies,
\begin{align}
    u_t(r, \theta)=\sum_{k\in \mathbb{Z}}w^k_t(r)e^{i\theta},
\end{align}
and obtain, for any $k\in \mathbb{Z}$, the following ODE
\begin{align}
\label{eq: ODE-coeff}
    \ddot{w}_t^k+r^{-1}\dot{w}^k_t+r^{-2}(t-k)w^k_t=0.
\end{align}
The solutions to \eqref{eq: ODE-coeff} are given by
\begin{align}
    w^k_t(r)=A_kr^{-\sqrt{k^2+tk}}+B_kr^{\sqrt{k^2+tk}}.
\end{align}
Since we are looking for solutions in $W^{1,2}(D^2)$ (or in $\mathfrak{H}$ if $t=1$), we have $B_k=0$ for any $k\in \mathbb{Z}$.
Therefore 
\begin{align}
\label{eq: Fourier-sol}
    u_t(r,\theta)=\sum_{k\in \mathbb{Z}}A_kr^{-\sqrt{k^2+tk}}e^{ik\theta},
\end{align}
where the coefficients $A_k$ are determined by the boundary data (of the component $\omega_+$).
This proves formula \eqref{II.35} for components where $G>0$.
The proof of the formula for the connected components $\omega$ where $G<0$ is analogous.\\
Alternatively, one can deduce the Lemma from Lemma \ref{lm-II-01}, combined with the fact that on every connected component $\omega$, depending on the sign of $G$ either all coefficients $A_j$ or all coefficients $B_j$ must vanish in order for $u_t$ to be in $W^{1,(2,\infty)}(D^2)$.
\hfill $\Box$

\subsubsection{Regularity of solutions}
\begin{Lm}
\label{lm-regularity}
    Let $u \in W^{1,2}(D^2)$ and $g$ be a solution of 
    \be
\label{II.41-a}
\lf\{
\begin{array}{l}
\operatorname{div}(\,g\,\nabla u)=0\quad\mbox{ in }D^2\\[5mm]
\ds\operatorname{div}(\,\ov{g}\,\nabla g)=0\quad\mbox{ in }D^2\\[5mm]
\ds\ov{g}\,\p_\nu g=0 \quad\mbox{ on }\p D^2\ ,
\end{array}
\rg.
\ee
 where $g$ is an  admissible $S^1$-harmonic map on $D^2$ with finitely many singularities. Then $u \in C^{1, \sqrt{2}-1}(D^2_R)$ for any $R\in (0,1)$.\hfill $\Box$
    \end{Lm}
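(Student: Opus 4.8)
\noindent{\bf Proof of Lemma~\ref{lm-regularity} (proposal).}
The plan is to combine standard interior elliptic regularity away from the singular points with the explicit expansion already established in Lemma~\ref{lm-exp} near them. Expanding $\operatorname{div}(g\,\nabla u)=g\,\Delta u+\nabla g\cdot\nabla u$ and using $\ov g\,\nabla g=i\,\nablap G$, the first equation of \eqref{II.41-a} reads
\be
-\Delta u=i\,\nablap G\cdot\nabla u\qquad\mbox{in }D^2\setminus\{p_1,\dots,p_N\}\ ,
\ee
which is exactly \eqref{II.34} with $t=1$. On $D^2\setminus\{p_1,\dots,p_N\}$ the function $G$ is harmonic, hence $\nablap G$ is a smooth bounded vector field there, and writing the equation as $-\Delta u=\vec b\cdot\nabla u$ with $\vec b:=i\,\nablap G$, elliptic bootstrap gives $u\in C^\infty(D^2\setminus\{p_1,\dots,p_N\})$. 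In particular $u$ is smooth across the regular level set $\{G=0\}$, so it only remains to prove the claimed Hölder bound in a fixed neighbourhood of each singular point $p_l$.

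Fix $p_l$ and let $\om$ be the component of $\{G\ne0\}$ containing it. Since $u\in W^{1,2}(D^2)$ we have $u\in\mathfrak H$ (take $\ti u=u$ and $A^\om=0$ on each component), so Lemma~\ref{lm-exp} with $t=1$ applies and yields the expansion \eqref{II.35} on $\om$. As in the proof of that lemma, the conformal invariance recalled in Section~\ref{sec:preliminaries} allows us to compose with a conformal diffeomorphism $\phi:D^2\to\om$ with $\phi(0)=p_l$, after which $G\circ\phi=\pm\log|x|$; since $\phi$ is a smooth diffeomorphism in the interior, it suffices to estimate $u\circ\phi$ near $0$, where \eqref{II.35} becomes $u\circ\phi=\sum_{j\in\Z}A^\om_j\,r^{\sqrt{j^2+j}}\,g^j$ with $r=|x|$. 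The unique nonconstant mode with vanishing exponent is $j=-1$, i.e. the term $A^\om_{-1}\,g^{-1}$ (the term $A^\om g^{-1}$ in the definition of $\mathfrak H$); its gradient is of size $|\nablap G|\sim r^{-1}$ and fails to be square integrable, so $u\in W^{1,2}$ forces $A^\om_{-1}=0$. Setting $A^\om_0=:u(p_l)$ and $w:=u\circ\phi-A^\om_0$, only exponents $\sqrt{j^2+j}$ with $j\notin\{0,-1\}$ survive, and since $\{\sqrt{j^2+j}:j\in\Z\}=\{0,0,\sqrt2,\sqrt2,\sqrt6,\dots\}$ the smallest one equals $\sqrt2$. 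Bounding $\sum_j|A^\om_j|^2\,|j|$ by $\|\nabla u\|_{L^2}^2$ and using Cauchy--Schwarz, this gives on each dyadic annulus $A_\rho:=\{\rho<r<2\rho\}$, $\rho\le\tfrac14$,
\be
\|w\|_{L^\infty(A_\rho)}\le C\,\rho^{\sqrt2}\ .
\ee

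To upgrade this decay into a gradient estimate I rescale. The equation $-\Delta w=i\,\nablap(G\circ\phi)\cdot\nabla w$ is invariant under $w(x)\mapsto w(\rho x)$, because $\nablap(G\circ\phi)=\pm\nablap\log|x|$ is homogeneous of degree $-1$ with bounds on $\{1<|y|<2\}$ independent of $\rho$. Hence $w_\rho(y):=\rho^{-\sqrt2}w(\rho y)$ solves the same equation on $\{1<|y|<2\}$ with $\|w_\rho\|_{L^\infty}\le C$, and interior Schauder estimates give $\|\nabla w_\rho\|_{C^{0,\sqrt2-1}(\{5/4<|y|<7/4\})}\le C$ uniformly in $\rho$. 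Scaling back produces $|\nabla w(x)|\le C\,|x|^{\sqrt2-1}$ together with $[\nabla w]_{C^{0,\sqrt2-1}(A_\rho)}\le C$ uniformly in $\rho$; since $\nabla w(x)\to0$ as $x\to0$, these annular bounds patch into $\nabla w\in C^{0,\sqrt2-1}$ on $\{|x|<\tfrac14\}$. Transporting back through the smooth map $\phi$ and combining with the smoothness away from the finitely many $p_l$ then yields $u\in C^{1,\sqrt2-1}(D^2_R)$ for every $R\in(0,1)$.

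I expect the genuinely delicate point to be this last borderline matching rather than the expansion itself: the first admissible nontrivial mode decays exactly like $r^{\sqrt2}$, so its gradient is of exact size $r^{\sqrt2-1}$ and no smoother, which is what pins the optimal Hölder exponent to $\sqrt2-1$ and makes the dyadic patching close precisely at that value. The scaling argument is used exactly to avoid estimating the Hölder norm of the full series directly, since the decay rate $\sqrt2$ of the leading surviving mode and the Hölder scaling exponent $\sqrt2-1$ are matched by construction.
\hfill $\Box$
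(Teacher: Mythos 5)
Your proposal is correct, and its first half coincides with the paper's own proof: interior smoothness away from the $p_l$ by bootstrap, reduction to the explicit expansion of Lemma~\ref{lm-exp} with $t=1$ (noting $W^{1,2}\subset\mathfrak H$), transfer by a conformal map to the model $G=\pm\log|x|$, vanishing of the $j=-1$ coefficient forced by square integrability of the gradient, and identification of $\sqrt2$ as the smallest surviving exponent. Where you genuinely diverge is in how the $C^{1,\sqrt2-1}$ regularity is extracted from the expansion. The paper estimates each Fourier mode directly in the H\"older norm, namely $\lVert a_j r^{\sqrt{j(j+1)}}e^{ij\theta}\rVert_{C^{1,\sqrt2-1}(B_R)}\le C\,|a_j|\,|j|^{\sqrt2}R^{\sqrt{j(j+1)}-\sqrt2}$, and sums the series by Cauchy--Schwarz against $(\sum_j|a_j|^2)^{1/2}$, the geometric factor $R^{\sqrt{j(j+1)}-\sqrt2}$ (for $R<1$) absorbing the polynomial growth $|j|^{\sqrt2}$; convergence of the series in $C^{1,\sqrt2-1}(B_R)$ concludes in one stroke. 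You instead use the expansion only to obtain the $L^\infty$ dyadic decay $\lVert w\rVert_{L^\infty(A_\rho)}\le C\rho^{\sqrt2}$, and then convert decay into regularity via the exact scale invariance of $-\Delta w=i\,\nabla^\perp\log|x|\cdot\nabla w$, interior Schauder estimates on rescaled annuli, and a patching of the annular seminorms. Both routes are complete and pin the same exponent; the paper's is shorter and fully explicit given that the modes are known, while yours is more robust machinery --- it needs only an $L^\infty$ decay rate plus the scaling structure, so it would survive perturbations of the equation for which no mode decomposition is available --- at the price of the dyadic patching step (covering a punctured neighbourhood by overlapping annuli and combining the uniform seminorms with the pointwise bound $|\nabla w|\le C|x|^{\sqrt2-1}$ and $\nabla w\to0$ at the origin), which is standard but should be written out if this argument were to replace the one in the paper.
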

\noindent{\bf Proof of lemma~\ref{lm-regularity}}
        It is clear by standard elliptic estimates that the map $u$ is smooth inside $D^2\setminus\{p_1\cdots p_N\}$. Because of the previous lemma, on each connected component $\om$ of $G\ne 0$ $u$ has the explicit form given by (\ref{II.35}) (with $t=1$).  We assume $G>0$. Thanks to the conformal invariance of the PDE mentioned in the preliminaries, after the composition with  an holomorphic  transformation $\phi$ from $D^2$ into $\om$ sending $0$ to the singularity $p$ we have
 \be
 \label{II.41-b}
 u\circ\phi=\sum_{j\in \mathbb{Z}}a_j\ e^{\sqrt{j^2+j}\,\log r}\ e^{i\,j\, \theta}\ . 
 \ee
Without loss of generality we may assume as well that $a_0 = 0$.  Moreover, since $u\circ\phi\in W^{1,2}(D^2,{\C})$ we must have $a_{-1}=0$. 
Observe that for each $j\in {\Z}$ and $R<1$
        \begin{equation}
            \lVert a_j r^{\sqrt{j(j+1)}} e^{i j \theta} \rVert_{C^{1,\sqrt{2}-1}(B_R(0))}
            \leq C |a_j| |j|^{\sqrt{2}} R^{\sqrt{j(j+1)} - \sqrt{2}}.
        \end{equation}
        Thus we have
        \begin{align*}
            \sum_{j \neq 0, -1} \lVert a_j r^{\sqrt{j(j+1)}} e^{i j \theta} \rVert_{C^{1,\sqrt{2}-1}(B_R(0))}
            &\leq  C \sum_{j \neq 0, -1} |a_j| |j|^{\sqrt{2}} R^{\sqrt{j(j+1)} - \sqrt{2}} \\
            &\leq C \left( \sum_{j \neq 0, 1} |a_j|^2\right)^{1/2} \left( \sum_{j \neq 0, 1} |j|^{2\sqrt{2}} R^{2(\sqrt{j(j+1)} - \sqrt{2})} \right)^{1/2} < +\infty\ ,
        \end{align*}
        since $\sqrt{j(j+1)} \geq \sqrt{2}$ for $j \neq 0, 1$, whenever $R < 1$. Therefore
        \[
            \sum_{j \in \Z} a_j r^{\sqrt{j(j+1)}} e^{i j \theta}
        \]
        is a convergent series in $C^{1, \sqrt{2}-1}(\tilde{B}_R(0))$. The square summability of the sequence $\{ a_j \}$ follows from $\sum_{j \neq 0, 1} |j| |a_j|^2 < +\infty$, which is implied by the fact that  $u \in W^{1,2}(B_1)$.\hfill $\Box$
\subsection{Convergence of $L_t$ minimizers}
The main theorem of this section is the following.  
\begin{Th}
\label{th-II.1}
Let $(d_l)_{l=1\cdots N}$ such that $d_l=\pm1$ and $\sum_{l=1}^N d_l=0$. Let $p_i$ be $N$ distinct points in $D^2$ and let $G$ be the Green's function solution to 
\be
\label{II.1}
\lf\{
\begin{array}{rl}
\ds\Delta G&\ds=2\pi\ \sum_{l=1}^N\ d_l\, \delta_{p_l}\quad\mbox{in }{\mathcal D}'(D^2)\\[5mm]
\ds G&\ds=0\quad\mbox{ on }\quad\p D^2\ .
\end{array}
\rg.
\ee
Assume that for ${\mathcal H}^1-$almost every $t\in{\R}$ each of the connected components $\om_i(t)$ of $\{ G(x)\le t\}$  satisfy
\be
\label{II.2}
\int_{\p \om_j(t)}\frac{\p G}{\p \nu}=2\pi\ ,
\ee
and that every connected component of $G\ne 0$ is a disc. Then for any $\psi\in H^\frac{1}{2}(\p D^2,{\C})$ there exists a unique $u\in \mathfrak{H}$ satisfying
\be
\label{II.4}
\lf\{
\begin{array}{rl}
\operatorname{div}(\,g\,\nabla u)&\ds=0\quad\mbox{ in }D^2\\[5mm]
\ds u&\ds=\psi\quad\mbox{ on }\p D^2\ 
\end{array}
\rg.
\ee
where
\be
\label{II.5}
\ov{g}\,\nabla g=i\,\nabla^\perp G\ .
\ee
Moreover for any connected component $\om$ of $G\ne 0$, the solution $u$ has the form
\be
\label{II.5-a}
u=\sum_{j\in \mathbb{Z}} A_j^{\om}\ g^j\ e^{-\sqrt{j^2+j}\,|G|}\quad\mbox{ and }\quad \ti{u}:= u-A^\om_{-1}\ g^{-1}\in W^{1,2}(\om,{\C})\ .
\ee
\hfill $\Box$
\end{Th}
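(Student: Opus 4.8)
The plan is to adapt the Ginzburg--Landau type scheme used in the one--point case (Theorem~\ref{thm-one-singularity}) to the multiply--connected nodal structure produced by an admissible $G$. First I would introduce, for $t\in[0,1)$, the perturbed functional
\[
L_t(u)=\frac12\int_{D^2}|\nabla u|^2\,dx^2+\frac t2\int_{D^2}G\,\lf<i\,\nabla u,\nablap u\rg>\,dx^2
\]
on $W^{1,2}_\psi(D^2,\C)$. The optimal Wente inequality for admissible Green functions (Lemma~\ref{lma-1}), namely $\lf|\int_{D^2}G\,\lf<i\,\nabla u,\nablap u\rg>\rg|\le\int_{D^2}|\nabla u|^2$, is precisely where the admissibility hypothesis (every component of $G\ne0$ is a disc) and the level--set normalisation \eqref{II.2} enter; it gives $L_t(u)\ge\frac{1-t}2\int|\nabla u|^2$. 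Hence $L_t$ is strictly convex and coercive on $W^{1,2}_\psi$ for $t<1$ and, exactly as in Lemma~\ref{lm-E-t-psi}, admits a unique minimiser $u_t$ solving the Euler--Lagrange equation \eqref{II.34}. By Lemma~\ref{lm-exp}, on each component $\om$ of $\{G\ne0\}$ the minimiser has the explicit expansion $u_t=\sum_j A_j^\om(t)\,g^j\,e^{-\sqrt{j^2+tj}\,|G|}$.

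Second --- and this is where the real work lies --- I would let $t\to1$. As in \eqref{0.14} the $L^2$ norm of $\nabla u_t$ will in general blow up like $\Theta_g(\psi)/\sqrt{1-t}$, so no uniform $W^{1,2}$ bound is available: \textbf{the main obstacle is to show that this blow-up is confined to the critical Fourier modes and is absorbed into a uniform bound in the Marcinkiewicz space $L^{2,\infty}$.} The only non--decaying exponent is $\sqrt{j^2+tj}\to0$ at $j=-1$, so I would split $u_t$ into the sum over $\om$ of its critical modes $A^\om_{-1}(t)\,g^{-1}e^{-\sqrt{1-t}\,|G|}$ and a remainder. Near each $p_l$ the critical mode has gradient of size $O(r^{\sqrt{1-t}-1})$, bounded in $L^{2,\infty}$ uniformly in $t$ once one controls the coefficients $A^\om_{-1}(t)$; these are the projections of the boundary datum onto the $g^{-1}$ mode and must be shown to stay bounded by $\|\psi\|_{H^{1/2}(\p D^2)}$ through the relevant (uniformly bounded) component of the linear solution operator --- this is the crux of the estimate. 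The remaining modes carry exponents $\sqrt{j(j+1)}$ bounded away from $0$ and are controlled in $W^{1,2}$ (whose gradients lie in $L^2\hookrightarrow L^{2,\infty}$). This yields $\|\nabla u_t\|_{L^{2,\infty}(D^2)}\le C\,\|\psi\|_{H^{1/2}(\p D^2)}$, in the spirit of~\cite{BBH} and~\cite{ST} and parallel to the one--point Lemma~\ref{lm:main-lemma-disk}.

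Third, I would extract a weak-$\ast$ limit $u$ of $u_t$ in $W^{1,(2,\infty)}$ with trace $\psi$. Away from the $p_l$, standard elliptic estimates upgrade the convergence to strong local convergence, so $\operatorname{div}(g\,\nabla u)=0$ passes to the limit there; the compensated (Jacobian) structure of $\nablap G\cdot\nabla u$ together with the explicit control near the $p_l$ handles the singular points, so the equation holds in $\mathcal D'(D^2)$. In contrast with the one--point case, no orthogonality condition on $\psi$ is required: the obstruction measured by $\Theta_g$ is now absorbed by the admissible $g^{-1}$ modes of $\mathfrak H$. The uniform $L^{2,\infty}$ bound, together with the mode--by--mode convergence $A^\om_{-1}(t)\to A^\om_{-1}$, shows $u\in\mathfrak H$. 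Applying Lemma~\ref{lm-exp} at $t=1$ now gives the expansion \eqref{II.5-a} on every component, and since the sole non--integrable contribution is the $j=-1$ term, $\ti u:=u-A^\om_{-1}\,g^{-1}$ collects only modes with $\sqrt{j(j+1)}>0$ (plus the constant $j=0$ mode) and hence lies in $W^{1,2}(\om,\C)$; its H\"older regularity then follows from Lemma~\ref{lm-regularity}.

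Finally, for uniqueness, given two solutions $u_1,u_2\in\mathfrak H$ with the same trace I would set $w:=u_1-u_2\in\mathfrak H$, solving $\operatorname{div}(g\,\nabla w)=0$ with $w=0$ on $\p D^2$. The delicate point is that $w$ need not lie in $W^{1,2}$ because of the borderline $g^{-1}$ terms, so the naive energy identity $\int g\,|\nabla w|^2=0$ must be justified in $\mathfrak H$; this is precisely the content of Lemma~\ref{lma-unique}, from which $w\equiv0$ follows and the proof is complete.
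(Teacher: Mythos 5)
There is a genuine gap, and it sits exactly where you yourself locate "the crux of the estimate." Your scheme (perturbed functional $L_t$, minimizers $u_t$, splitting off the critical $j=-1$ modes, passing to a weak-$\ast$ limit in $W^{1,(2,\infty)}$, uniqueness via Lemma~\ref{lma-unique}) is the same skeleton as the paper's, but the two key assertions --- that the coefficients $A^{\om}_{-1}(t)$ stay bounded, and that the remainder (the non-critical modes) stays bounded in $W^{1,2}$ --- are stated, not proved, and the mechanism you suggest for them does not exist. The $A^{\om}_j(t)$ are \emph{not} projections of the prescribed boundary datum $\psi$: they are computed from the trace of the unknown solution $u_t$ on $\p\om$, which for each nodal component $\om$ of $G\ne 0$ is (mostly) an interior curve of $D^2$ on which nothing is prescribed. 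There is no ``uniformly bounded component of the linear solution operator'' to invoke, since the full solution operator degenerates as $t\to 1$ --- that is precisely why $\|\nabla u_t\|_{L^2}$ blows up. In the paper, establishing these two bounds is the bulk of the proof of Theorem~\ref{th-II.1}: one argues by contradiction, normalizes $u_k$, and distinguishes the case $v_\infty\ne0$ (excluded by the equality case of Lemma~\ref{lma-1}) from $v_\infty=0$, and within the latter the two sub-cases \eqref{III.23} and \eqref{III.33} according to whether $\liminf_k \sum_\om|A^{\om,k}_{-1}|^2\big/\sum_\om\|\nabla\ti u_k\|^2_{L^2(\om)}$ vanishes or not. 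Excluding the first sub-case requires proving that the traces of the normalized remainders tend to zero in $H^{1/2}(\p\om)$ --- itself a nontrivial step using a harmonic-extension/integrability-by-compensation decomposition near $\p D^2$ --- so that the one-disc Lemma~\ref{lm:main-lemma-disk} can be applied component by component; excluding the second requires applying the uniqueness Lemma~\ref{lma-unique} to a nontrivial $W^{1,(2,\infty)}$ limit. Only once \eqref{III.38-d} is in hand does the boundary-trace computation \eqref{III.39}--\eqref{III.41} convert the prescribed datum $\psi$ on a positive-measure arc $\Delta\subset\p\om\cap\p D^2$ into a bound on $|A^{\om,k}_{-1}|$, because $\|g^{-1}-\dashint_\Delta g^{-1}\|_{L^2(\Delta)}\ne 0$. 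None of this machinery appears in your proposal, and without it the passage to the limit has no uniform bound to rest on.

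A secondary omission: the trace argument just described only works for components $\om$ with $|\p\om\cap\p D^2|>0$. The paper removes this restriction at the end by ordering the components so that each one shares a positive-measure boundary arc either with $\p D^2$ or with a previously treated component, and propagating the bounds inductively. Your sketch implicitly assumes every component sees the datum $\psi$ directly, which an admissible configuration need not satisfy.
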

As for the case of one singularity, we introduce the sequence of variational problems for $t\in [0,1)$:
\be
\label{II.5-aaa}
E_t(\psi):=\inf\lf\{
\begin{array}{c}
\ds L_t(u):=\frac{1}{2}\int_{D^2}|\nabla u|^2\ dx^2+\frac{t}{2}\int_{D^2}G\, \lf<i\,\nabla u,\nabla^\perp u\rg>\ dx^2\\[5mm]
\ds u\in W^{1,2}(D^2,{\C})\quad\mbox{ and }\quad u=\psi\quad\mbox{ on }\p D^2
 \end{array} \rg\}
\ee
Before we start with the proof of the Theorem, we give an an analog to Lemma \ref{lm-E-t-psi} for the present setting.
\begin{Lm}
\label{lm-E-t-psi-II}
We have
\be
\label{II.6}
\forall\ t\in [0,1)\quad E_t(\psi)>0
\ee
and $E_t(\psi)$ is achieved by a unique minimizer $u_t\in W^{1,2}_\psi(D^2,{\C})$ which satisfies the following Euler Lagrange Equation
\be
\label{II.7}
\lf\{
\begin{array}{l}
\ds-\Delta u_t\ds=t\ i\, \nabla^\perp G\cdot \nabla u_t\quad\mbox{ in }D^2\\[5mm]
\ds u_t\ds=\psi\quad\mbox{ on }\p D^2\ .
\end{array}
\rg.
\ee
\hfill $\Box$
\end{Lm}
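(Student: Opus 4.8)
The plan is to follow verbatim the strategy used for Lemma~\ref{lm-E-t-psi}: the entire statement reduces to establishing the coercivity estimate
\be
\label{II.coercivity}
\lf|\int_{D^2}G\,\lf<i\,\nabla u,\nablap u\rg>\,dx^2\rg|\le\int_{D^2}|\nabla u|^2\,dx^2\qquad\forall\,u\in W^{1,2}_\psi(D^2,\C),
\ee
with the \emph{optimal} constant $1$. Once \eqref{II.coercivity} is available, exactly as in \eqref{I.13} one obtains $L_t(u)\ge\frac{1-t}{2}\int_{D^2}|\nabla u|^2\,dx^2$, so that for $t\in[0,1)$ the functional $L_t$ is bounded below, coercive and strictly convex on $W^{1,2}_\psi(D^2,\C)$. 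The direct method then produces a unique minimizer $u_t$, and a standard first-variation computation (using $\lf<i\,\nabla u,\nablap u\rg>=-2\det\nabla u$ and integration by parts against $G$) yields the Euler--Lagrange equation \eqref{II.7}; positivity of $E_t(\psi)$ follows as before.

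The only genuinely new point with respect to the one--point case is the \emph{constant} in \eqref{II.coercivity}, and this is precisely where admissibility enters. The naive argument — writing $\int_{D^2}G\,\lf<i\,\nabla u,\nablap u\rg>=2\pi\sum_l d_l\,b(p_l)$ with $\Delta b=\lf<i\,\nabla u,\nablap u\rg>$ and $b=0$ on $\p D^2$, and invoking Topping's estimate \cite{Top} $\|b\|_{L^\infty}\le\frac{1}{2\pi}\int_{D^2}|\nabla u|^2$ — only delivers the constant $N$, which is useless for $N\ge2$. Instead I would decompose $D^2$, up to the Lebesgue--negligible nodal set $\{G=0\}$, into the connected components $\om$ of $\{G\ne0\}$. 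By admissibility these are topological discs each containing a single singularity $p_\om$ of degree $d_\om=\pm1$ (and conversely every such disc carries a singularity, since otherwise $G$ would be harmonic with vanishing boundary data on $\om$, hence identically zero), so there are exactly $N$ of them and on each one $G$ equals $d_\om$ times the Green function of $\om$ with pole at $p_\om$, vanishing on $\p\om$.

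On each component I would solve $\Delta b_\om=\lf<i\,\nabla u,\nablap u\rg>$ in $\om$ with $b_\om=0$ on $\p\om$. Since both $G$ and $b_\om$ vanish on $\p\om$ and $\Delta G=2\pi\,d_\om\,\delta_{p_\om}$ in $\om$, Green's identity gives $\int_\om G\,\lf<i\,\nabla u,\nablap u\rg>=2\pi\,d_\om\,b_\om(p_\om)$. The decisive step is to apply Topping's optimal Wente inequality \emph{on the component $\om$}: the Dirichlet energy of $u$, the Jacobian two-form, and $\sup|b_\om|$ are all conformally invariant, so pulling back by a biholomorphism $D^2\to\om$ transfers Topping's constant $\frac{1}{2\pi}$ unchanged to $\om$, whence $|b_\om(p_\om)|\le\|b_\om\|_{L^\infty(\om)}\le\frac{1}{2\pi}\int_\om|\nabla u|^2\,dx^2$. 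Consequently $\lf|\int_\om G\,\lf<i\,\nabla u,\nablap u\rg>\rg|\le\int_\om|\nabla u|^2\,dx^2$.

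Finally I would sum over the $N$ pairwise disjoint components, which cover $D^2$ up to measure zero, so that the local energies add to at most the global one:
\be
\label{II.summation}
\lf|\int_{D^2}G\,\lf<i\,\nabla u,\nablap u\rg>\rg|\le\sum_\om\lf|\int_\om G\,\lf<i\,\nabla u,\nablap u\rg>\rg|\le\sum_\om\int_\om|\nabla u|^2\le\int_{D^2}|\nabla u|^2,
\ee
which is exactly \eqref{II.coercivity}. I expect the main obstacle to be conceptual rather than computational: it is the observation that replacing the single \emph{global} Wente estimate, which costs the factor $N$, by $N$ \emph{localized} estimates on disjoint discs restores the optimal constant $1$, admissibility being precisely the hypothesis that makes this localization legitimate. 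Everything else then reduces to the already-proved argument of Lemma~\ref{lm-E-t-psi}.
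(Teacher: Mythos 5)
Your proof is correct, and its outer shell (the coercivity bound $L_t(u)\ge\frac{1-t}{2}\int_{D^2}|\nabla u|^2\,dx^2$, strict convexity of the quadratic functional on the affine space $W^{1,2}_\psi(D^2,{\C})$, the direct method, and the first variation giving \eqref{II.7}) coincides with the paper's; but you establish the key estimate \eqref{II.12} by a genuinely different argument. The paper solves one global Dirichlet problem \eqref{II.8} on $D^2$ and deduces \eqref{II.10} from its Appendix Lemma~\ref{lma-1}, whose proof runs through the co-area formula, the isoperimetric inequality applied to the connected components of every sublevel set $\{G\le t\}$, and the flux normalization \eqref{A-2}. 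You bypass Lemma~\ref{lma-1} entirely: you solve one Dirichlet problem per nodal component $\om$ of $\{G\ne 0\}$, apply Topping's optimal constant on each $\om$ after transporting it by the Riemann map (legitimate, since admissibility makes each $\om$ simply connected with exactly one pole of degree $\pm 1$, and the Dirichlet energy, the Jacobian $2$-form, the $L^\infty$ norm and the class $W^{1,2}_0$ are all conformally invariant), and then sum over the disjoint components. The two routes exploit admissibility through different but equivalent manifestations: the nodal decomposition for you, the a.e.\ level-set flux condition for the appendix. Your localization is more elementary --- Topping on the disc is used as a black box --- and it isolates cleanly why the constant is $1$ rather than the naive $N$; the paper's Lemma~\ref{lma-1} is the stronger statement, since it also characterizes the equality case ($(a,b)$ constant), which is irrelevant for the present lemma but is reused crucially later (Case 1 in the proof of Theorem~\ref{th-II.1}, and Lemma~\ref{lma-unique}), so the appendix route amortizes over the rest of the paper. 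The only point you should flag is that your per-component Green identity $\int_\om G\,\lf<i\,\nabla u,\nabla^\perp u\rg>\,dx^2=2\pi\,d_\om\,b_\om(p_\om)$ requires an integration-by-parts justification across the interior nodal curves of $G$ (where $\p\om$ is merely the zero set of a harmonic function), exactly parallel to the justification the paper leaves implicit in \eqref{II.11}; this is a routine approximation argument and not a gap relative to the paper's own level of rigor.
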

\noindent{\bf Proof of lemma~\ref{lm-E-t-psi-II}.}
The proof is entirely analogous to the one of Lemma \ref{lm-E-t-psi}.
For any $u\in W^{1,2}(D^2,{\C})$ we consider $b$  to be the unique solution in $W^{1,2}_0(D^2,{\C})$ of
\be
\label{II.8}
\lf\{
\begin{array}{l}
\ds\Delta b\ds=\lf<i\,\nabla u,\nabla^\perp u\rg>\quad\mbox{ in }D^2\\[5mm]
\ds b=0\quad\mbox{ on }\p D^2\ .
\end{array}
\rg.
\ee
Observe that 
\be
\label{II.9}
\ds\lf<i\,\nabla u,\nabla^\perp u\rg>= -2\,\p_{x_1}u_1\,\p_{x_2}u_2+2\,\p_{x_2}u_1\,\p_{x_1}u_2=-2\,\mbox{ det}(\nabla u)\ .
\ee
Because of lemma~\ref{lma-1}
\be
\label{II.10}
\lf|\sum_{j=1}^N d_j \, b(p_j)\rg|\le \frac{1}{2\pi}\,\int_{D^2}|\nabla u|^2\ dx^2
\ee
Moreover since both $G$ and $b$ are equal to zero on $\p D^2$ we have
\be
\label{II.11}
\int_{D^2}G\, \lf<i\,\nabla u,\nabla^\perp u\rg>\ dx^2=\int_{D^2}b\ \Delta G\ dx^2=2\pi\ \sum_{j=1}^N d_j \, b(p_j)\ .
\ee
Combining (\ref{II.10}) and (\ref{II.11}) gives in particular
\be
\label{II.12}
\lf|\int_{D^2}G\, \lf<i\,\nabla u,\nabla^\perp u\rg>\ dx^2\rg|\le \int_{D^2}|\nabla u|^2\ dx^2\ ,
\ee
from which we deduce for any $u\in W^{1,2}_\psi(D^2,{\C})$
\be
\label{II.13}
 L_t(u):=\frac{1}{2}\int_{D^2}|\nabla u|^2\ dx^2+\frac{t}{2}\int_{D^2}G\, \lf<i\,\nabla u,\nabla^\perp u\rg>\ dx^2\ge \frac{(1-t)}{2}\,\int_{D^2}|\nabla u|^2\ dx^2\ .
\ee
Hence $ L_t$ is bounded from below and coercive in $W^{1,2}_\psi(D^2,{\C})$. Since $L_t$ is convex on $W^{1,2}_\psi(D^2,{\C})$ there exists a unique minimizer. From classical calculus of variations
we obtain the Euler Lagrange equation (\ref{II.7}). The lemma is proved. \hfill $\Box$

\noindent{\bf Proof of Lemma~\ref{th-II.1}.} 
We will first prove Theorem \ref{th-II.1} under the assumption that for all connected components $\omega$ of $G\neq 0$ there holds $\lvert\partial \omega\cap\partial D^2\rvert>0$. At the end of the proof we will explain how to remove this assumption.

Assume first that
\be
\label{III.1}
\liminf_{t\rightarrow 1}\int_{D^2}|\nabla u_t|^2\ dx^2<+\infty\ ,
\ee
then a subsequence is converging towards a solution. \\
Assume now that
\be
\label{III.2}
\liminf_{t\rightarrow 1}\int_{D^2}|\nabla u_t|^2\ dx^2=+\infty\ .
\ee
Let $v_k$ given by
\be
\label{II.16}
v_k:=\frac{u_{t_k}}{\|\nabla u_{t_k}\|_{L^2(D^2)}}\ .
\ee
We extract a subsequence (still denoted $v_k$) such that $v_k$ weakly converges in $W^{1,2}(D^2,{\C})$ to a limit $v_\infty$.

\medskip

\noindent{\bf Case 1: $v_\infty\neq 0$}. If this is the case we have by linearity that $v_\infty\in W^{1,2}(D^2,{\C})$ solves

\be
\label{III.17}
\lf\{
\begin{array}{l}
\ds-\Delta v_\infty\ds= i\, \nabla^\perp G\cdot \nabla v_\infty\quad\mbox{ in }D^2\\[5mm]
\ds v_\infty\ds=0\quad\mbox{ on }\p D^2\ .
\end{array}
\rg.
\ee
Multiplying by $v_\infty$ and integrating by parts gives
\be
\label{III.18}
 L_1(v_\infty):=\frac{1}{2}\int_{D^2}|\nabla v_\infty|^2\ dx^2+\frac{1}{2}\int_{D^2}G\, \lf<i\,\nabla v_\infty,\nabla^\perp v_\infty\rg>\ dx^2=0
\ee
Let $c_\infty$ be the solution to
\be
\label{III.19}
\lf\{
\begin{array}{l}
\ds\Delta c_\infty\ds=\lf<i\,\nabla v_\infty,\nabla^\perp v_\infty\rg>\quad\mbox{ in }D^2\\[5mm]
\ds c_\infty=0\quad\mbox{ on }\p D^2\ .
\end{array}
\rg.
\ee
Inserting this identity in (\ref{III.18}) gives
\be
\label{III.20}
\int_{D^2}|\nabla v_\infty|^2\ dx^2=2\pi\, \sum_{j=1}^Nd_j\,c_\infty(p_j)\ .
\ee
This is a case of equality for the lemma~\ref{lma-1} this implies that $v_\infty=0$ which is a contradiction.

\medskip
\noindent{\bf Case 2: $v_\infty= 0$}

In this case we would like to show that
\begin{align}
\label{eq: u-tilde-bounded-W12}
    \liminf_{k\to\infty} \sum_\om\|\nabla \ti{u}_k\|^2_{L^2(\om)}<\infty.
\end{align}
We will show \eqref{eq: u-tilde-bounded-W12} arguing by contradiction.\\
More precisely, we will show that if \eqref{eq: u-tilde-bounded-W12} does not hold, than we will get a contradiction in both of the following cases:
    \begin{enumerate}
        \item $\displaystyle \liminf_{k\to\infty}\frac{\sum_{\om}|A^{\om,k}_{-1}|^2}{\sum_\om\|\nabla \ti{u}_k\|^2_{L^2(\om)}}= 0$,
        \item $\displaystyle \liminf_{k\to\infty}\frac{\sum_{\om}|A^{\om,k}_{-1}|^2}{\sum_\om\|\nabla \ti{u}_k\|^2_{L^2(\om)}}>0$.
    \end{enumerate}
Assume first that
\be
\label{III.23}
\sum_\om\|\nabla \ti{u}_k\|^2_{L^2(\om)}\rightarrow +\infty\quad \mbox{  and  }\quad \liminf_{k\to\infty}\frac{\sum_{\om}|A^{\om,k}_{-1}|^2}{\sum_\om\|\nabla \ti{u}_k\|^2_{L^2(\om)}}= 0
\ee
Then for any $1<p<2$ we have
\be
\label{III.24}
\begin{array}{l}
\ds\int_{D^2}|\nabla u|^p\ dx^2=\sum_\om\int_\om|\nabla u|^p\ dx^2\le C_p\sum_\om\int_\om|\nabla \ti{u}|^p\ dx^2+|A_{-1}^{\om,k}|^p\ \int_\om|\nabla (g^{-1}\, e^{-\sqrt{1-t}\, |G|})|^p\ dx^2\\[5mm]
\ds\quad\le C\, \sum_\om\|\nabla \ti{u}_k\|^p_{L^2(\om)}
\end{array}
\ee
Hence we can extract a subsequence such that 
\be
\label{III.25}
\hat{u}_k:=\frac{u_k}{\sum_\om\|\nabla \ti{u}_k\|_{L^2(\om)}}\rightharpoonup \hat{u}_\infty\quad\mbox{ weakly in }W^{1,p}(D^2).
\ee
Since
\be
\label{III.26}
\operatorname{div}\lf(\nabla \hat{u}_k+i\, t_k\,G\,\nabla^\perp \hat{u}_k    \rg)=0,
\ee
we have at the limit
\be
\label{III.27}
\operatorname{div}\lf(\nabla \hat{u}_\infty+i\, \,G\,\nabla^\perp \hat{u}_\infty    \rg)=0\ .
\ee
On $\om$ we have
\be
\label{III.28}
\hat{u}_k=\frac{\ti{u}_k}{\sum_\om\|\nabla \ti{u}_k\|_{L^2(\om)}}+ \frac{A_{-1}^{\om,k}\, g^{-1}\, e^{-\sqrt{1-t}\, |G|}}{\sum_\om\|\nabla \ti{u}_k\|_{L^2(\om)}}
\ee
Since by assumption (\ref{III.23})
\be
\label{III.29}
\frac{A_{-1}^{\om,k}\, g^{-1}\, e^{-\sqrt{1-t}\, |G|}}{\sum_\om\|\nabla \ti{u}_k\|_{L^2(\om)}}\longrightarrow 0\quad\mbox{ in }{\mathcal D}'(\om)
\ee
and since obviously
\be
\label{III.30}
\lf\|\nabla\lf(\frac{\ti{u}_k}{\sum_\om\|\nabla \ti{u}_k\|_{L^2(\om)}}\rg)\rg\|_{L^2(\om)}\le 1
\ee
we deduce that
\be
\label{III.31}
\sum_{\om}\int_\om|\nabla \hat{u}_\infty|^2\ dx^2\le C
\ee
combined with the fact that $\nabla \hat{u}_\infty$ is absolutely continuous with respect to Lebesgue we deduce that $\hat{u}_\infty$ is in $W^{1,2}(D^2)$. It has a trace equal to zero thanks to the second part of assumption (\ref{III.23}). Therefore we must have $\hat{u}_\infty=0$, otherwise we would have a non trivial case of equality in (\ref{A-3}).\\
Next we claim that the traces of $\frac{\ti{u}_k}{\sum_\om\|\nabla \ti{u}_k\|_{L^2(\om)}}$ on the boundaries of the components $\omega$ go to zero in $H^\frac{1}{2}(\partial \omega)$. By the second assumption in \eqref{III.23} it is enough to prove the statement for the sequence $\hat{u}_k$. Notice that away from $\partial D^2$ and from the singularities of $G$, by a classical bootstrap argument $\hat{u}_k$ is precompact in $C^l$ for any $l\in \mathbb{N}$. Since the weak limit of $\hat{u}_k$ is zero, $\hat{u}_k$ is converging strongly in $C^l$ to zero away from $\partial D^2$ and the singularities of $G$, for any $l\in \mathbb{N}$; therefore it is enough to prove the claim in a neighbourhood of $\partial D^2$.
To this end let $\rho\in (0,1)$ such that all singularities of $G$ are contained in $D^2_\rho$. Set $A=D^2\smallsetminus D^2_\rho$. For any $k\in \mathbb{N}$ let $\hat{u}_k^1$ and $\hat{u}_k^2$ be solutions of
\begin{align}
    \begin{cases}
        -\Delta \hat{u}^1_k=it_k\nabla^\perp G\cdot \nabla \hat{u}_k&\text{ in }A\\
        \hat{u}^1_k=0&\text{ on }\partial A,
    \end{cases}\qquad
    \begin{cases}
        -\Delta \hat{u}^2_k=0 &\text{ in }A\\
        \hat{u}^2_k=\hat{u}_k&\text{ on }\partial A.
    \end{cases}
\end{align}
As observed above, $\hat{u}_k\to 0$ in $H^\frac{1}{2}(\partial D^2_\rho)$. Moreover by the first assumption in \eqref{III.23} we have $\hat{u}_k\to 0$ in $H^\frac{1}{2}(\partial D^2)$. Therefore $\lVert \hat{u}_k^2\rVert_{W^{1,2}(D^2)}\to 0$. In particular the traces of $\hat{u}_k^2$ go to zero in $H^\frac{1}{2}(\partial \omega)$ for any component $\omega$.\\
On the other hand observe that $\nabla G\in L^\infty(A)$, therefore we can apply the following integrability-by-compensation estimate (see for instance Lemma IV.2 in \cite{BerRiv}): for any $p<\infty$
\begin{align}
    \lVert\nabla \hat{u}_k^1\rVert_{L^p(A)}\leq C_p\lVert \nabla \hat{u}_k\rVert_{L^2(A)}\lVert \nabla G\rVert_{L^p(A)}.
\end{align}
As $\lVert \nabla \hat{u}_k\rVert_{L^2(D^2)}$ is uniformly bounded by $1$, $\hat{u}_k^1$ is uniformly bounded in $W^{1,p}(A)$, for some $p\in (2,\infty)$; therefore the sequence converges to zero weakly in $W^{1,p}(A)$. It follows that for any component $\omega$ we have $\hat{u}_k^1\to 0$ weakly in $W^{1-\frac{1}{p},p}(\partial(\omega\cap A))$. Since the embedding $W^{1-\frac{1}{p},p}(\partial(\omega\cap A))\hookrightarrow H^\frac{1}{2}(\partial(\omega\cap A))$ is compact we conclude that $\hat{u}_k^1\to 0$ strongly in $H^{\frac{1}{2}}(\partial (\omega\cap A))$. This concludes the proof of the claim.
Since by definition we have
\begin{align}
    \int_{\partial \omega}\frac{\ti{u}_k}{\sum_\om\|\nabla \ti{u}_k\|_{L^2(\om)}} \, dg=0,
\end{align}
Lemma \ref{lm:main-lemma-disk} implies that
\begin{align}
    \lim_{k\to\infty}\left\lVert \nabla\left(\frac{\ti{u}_k}{\sum_\om\|\nabla \ti{u}_k\|_{L^2(\om)}}\right)\right\rVert_{L^2(\om)}=0.
\end{align}
Since there is at least one $\omega$ such that, for a subsequence,
\begin{align}
\label{III.32}
\lim_{k\rightarrow+\infty}\frac{\|\nabla\ti{u}_k\|_{L^2(\om)}}{\sum_\om\|\nabla \ti{u}_k\|_{L^2(\om)}}>0,
\end{align}
we get a contradiction.\\
Next assume that
\be
\label{III.33}
\liminf_{k\rightarrow +\infty}\sum_\om\|\nabla \ti{u}_k\|^2_{L^2(\om)}\rightarrow +\infty\quad \mbox{  and  }\quad\frac{\sum_{\om}|A^{\om,k}_{-1}|^2}{\sum_\om\|\nabla \ti{u}_k\|^2_{L^2(\om)}}>0\ .
\ee
The second assumption in \eqref{III.33} implies that there is a subsequence such that
\be
\label{III.34}
\lim_{k\rightarrow+\infty}\lf\|\nabla\lf(\frac{u_k}{\sum_{\om}|A^{\om,k}_{-1}|}\rg)\rg\|_{L^{2,\infty}(D^2)}<+\infty\ .
\ee
Moreover \eqref{III.33} also implies that 
\be
\label{III.35}
\mbox{Tr}\lf(\frac{u_k}{\sum_{\om}|A^{\om,k}_{-1}|}\rg)=\frac{\psi}{\sum_{\om}|A^{\om,k}_{-1}|}\longrightarrow 0\quad\mbox{ in }H^\frac{1}{2}(\p D^2)
\ee
We extract a subsequence such that 
\be
\label{III.36}
\check{u}_k:=\frac{u_k}{\sum_{\om}|A^{\om,k}_{-1}|}\rightharpoonup \check{u}_\infty\quad\mbox{ weakly in } W^{1,(2,\infty)}(D^2)
\ee
Observe that for any $\om$
\be
\label{III.37}
\frac{sgn(\om)}{2\pi\,i}\,\int_{\p\om} \check{u}_k\ dg=\frac{A^{\om,k}}{\sum_{\om}|A^{\om,k}_{-1}|}\longrightarrow \frac{sgn(\om)}{2\pi\,i}\,\int_{\p\om} \check{u}_\infty\ dg
\ee
Hence $\check{u}_\infty$ is a non zero solution in $W^{1,(2,\infty)}(D^2)$ of 
\be
\label{III.38}
\lf\{
\begin{array}{rl}
\ds-\Delta \check{u}_\infty\ds&\ds= i\, \nabla^\perp G\cdot \nabla \check{u}_\infty\quad\mbox{ in }D^2\\[5mm]
\ds \check{u}_\infty\ds&\ds=0\quad\mbox{ on }\p D^2\ .
\end{array}
\rg.
\ee
On each connected component $\om$ of $G\ne 0$ in $D^2$ we have
\be
\label{III.38-b}
\check{u}_k:=\frac{u_k}{\sum_{\om}|A^{\om,k}_{-1}|}=\frac{\ti{u}_k}{\sum_{\om}|A^{\om,k}_{-1}|}+ \frac{A^{\om,k}_{-1}}{\sum_{\om}|A^{\om,k}_{-1}|}\ g^{-1}\ e^{-\sqrt{1-t_k}\,|G|}\ .
\ee
We deduce that on each component $\om$ $\check{u}_\infty$ can be decomposed as follows
\be
\label{III.38-c}
\check{u}_\infty=\ti{u}_\infty+ B^\om\, g^{-1}
\ee
Therefore with the help of Lemma~\ref{lma-unique} we get to a contradiction. This conclude the proof of \eqref{eq: u-tilde-bounded-W12}, i.e. there holds
\be
\label{III.38-d}
\liminf_{k\rightarrow +\infty}\sum_\om\|\nabla \ti{u}_k\|^2_{L^2(\om)}<+\infty\ .
\ee
For any $\omega$, let $\Delta$ be a positive measure connected component of $\p\om\cap \p D^2$. We have
\be
\label{III.39}
\begin{array}{rl}
\ds\limsup_{k\rightarrow+\infty}\lf\|u_k-\ti{u}_k-\dashint_\Delta (u_k-\ti{u}_k)\rg\|_{L^2(\Delta)}&\ds\le C\|\psi\|_{L^2(\p D^2)} +\limsup_{k\rightarrow+\infty}\lf\|\ti{u}_k-\dashint_\Delta \ti{u}_k\rg\|_{L^2(\Delta)}\\[5mm]
\ds\quad\quad\quad&\ds\le C\|\psi\|_{L^2(\p D^2)} + C\ \limsup_{k\rightarrow +\infty}\|\nabla\ti{u}_k\|_{L^2(\om)}\ .
\end{array}
\ee
(here $\tilde{u}_k$ denotes the trace of $\tilde{u}_k\vert_{\omega}$). This gives
\be
\label{III.40}
\begin{array}{rl}
\ds\limsup_{k\rightarrow+\infty}|A^{\om,k}_{-1}|\ \lf\|g^{-1}-\dashint g^{-1}\rg\|_{L^2(\Delta)}<+\infty\ .
\end{array}
\ee
Hence $ \lf\|g^{-1}-\dashint g^{-1}\rg\|_{L^2(\Delta)}\ne 0$ and we deduce
\be
\label{III.41}
\ds\limsup_{k\rightarrow+\infty}|A^{\om,k}_{-1}|<+\infty
\ee
This gives that ${u}_k$ is uniformly bounded in $W^{1,(2,\infty)}(D^2)$. Since all $u_k$ have trace $\psi$ on $\partial D^2$ we can therefore extract a subsequence of $u_k$ converging weakly-$\ast$ in $W^{1,(2,\infty)}(D^2)$ toward a function $u\in \mathfrak{H}$ which satisfies \eqref{II.4}. The fact that $u$ satisfies \eqref{II.5-a} follows from Lemma \ref{lm-exp}. This 
and Theorem~\ref{th-II.1} follows.\\
Finally we discuss how to remove the assumption that for any $\omega$ there holds $\lvert \partial \omega\cap\partial D^2\rvert>0.$ Let $\omega_0$ be one of the connected components of $G\neq 0$ such that $\lvert\partial \omega_0\cap\partial D^2\rvert>0$. Number all other components $\omega$ inductively, in such a way that any $\omega_i$ satisfies $\lvert \partial \omega_i\cap\partial \omega_j\rvert>0$ for some $j<i$ or $\lvert\partial \omega_i\cap\partial D^2\rvert>0$.
For $\omega_0$ we can perform computation \eqref{III.39} to obtain (up to a subsequence)
\begin{align}
\label{eq: control-on-omega0}
    \lim_{k\rightarrow+\infty}|A^{\om_0,k}_{-1}|<+\infty\quad\text{and}\quad \lim_{k\to\infty}\lVert \tilde{u}_k\rVert_{W^{1,2}(\omega_0)}<\infty.
\end{align}
Since all $u_k$ have trace $\psi$ on $\partial D^2$, we can extract a subsequence converging weakly-$\ast$ in $W^{1,2}(\omega_0)$.
Moreover it follows from \eqref{eq: control-on-omega0} that the traces of $u_k$ on $\partial \omega_0$ are uniformly bounded in $H^\frac{1}{2}(\partial \omega_0)$. Now let $\Delta_1$ be a connected component of $\partial \omega_0\cap\partial \omega_1$ of positive measure. We can repeat computation \eqref{III.39} for $\omega_1$ and $\Delta_1$, where on the right hand side instead of $\lVert \psi\rVert_{L^2(\partial D^2)}$ we use $\sup_{k\in \mathbb{N}}\lVert u_k\rVert_{L^2(\partial\omega_0 \cup \partial D^2)}$. This way we obtain that there is a subsequence converging weakly-$\ast$ in $W^{1,2}(\omega_1)$ (as the traces are bounded in $L^2(\partial D^2)$) and the analogous to \eqref{eq: control-on-omega0} holds for $\omega_1$. We can therefore iterate this argument to all the components $\omega$ and then conclude as in the previous case.
\hfill $\Box$
\subsection{A Question}
\begin{Que}
Under the assumption of Lemma~\ref{th-II.1} on the space
\[
{\mathfrak D}:=\lf\{\psi\in C^1(\p D^2,{\C})\ ;\ \int_{\p D^2} \psi\, dg=0   \rg\}
\] 
we consider the map
\[
\begin{array}{rcl}
\ds{\mathfrak L}:\quad{\mathfrak D}&\ds\quad\longrightarrow\quad&\ds {\C}^N\\[3mm]
\psi&\ds\quad\longrightarrow\quad&\ds \lf(\int_{\p \om_j} u\ dg\rg)_{j=1\cdots N}
\end{array}
\]
where $u$ is the solution given by  lemma~\ref{th-II.1}. Compute the dimension of the ${\C}$-Rank of $\mathfrak L$ in generic configurations. Is it true that generically 
\be
\label{III.42}
\mbox{Rank}_{\C}({\mathfrak L})=N-1?
\ee
\end{Que}
\section{The case of 2 singular points of opposite degrees}

We consider the case of two point singularities. That is, we assume that $D^2=\om_+\cup \om_-\cup\{G=0\}$ where $\om_+$ and $\om_-$ are diffeomorphic to $D^2$ and correspond respectively to the domains
where $G>0$ and $G<0$. Moreover there exists $p_\pm\in \om_\pm$ such that
 \be
\label{r-II.42}
\lf\{
\begin{array}{l}
\ds \Delta G= 2\pi \, \mp \delta_{p_\pm}\quad\mbox{ in }\om_\pm\\[5mm]
\ds G=0\quad\mbox{ on }\ \p \om_\pm\ .
\end{array}
\rg.
\ee
Let $\psi\in C^1(\p D^2,{\C})$ such that
\be
\label{r-II.42-a}
\int_{\p D^2}\psi\ dg=0\ .
\ee
Let $u$ be the $W^{1,(2,\infty)}$ solution of
\[
\lf\{
\begin{array}{l}
\ds\Delta u+i\,\nabla^\perp G\cdot \nabla u=0\quad\mbox{ in }D^2\\[5mm]
\ds u=\psi\quad\mbox{ on }\p D^2
 \end{array}
 \rg.
\]
Because of lemma~\ref{lm-exp} there exist $(A^+_j)_{j\in{\Z}}$ and $(A^-_j)_{j\in{\Z}}$
\be
\label{r-II.43}
\begin{array}{l}
\ds u(x)=\lf\{
\begin{array}{l}
\ds\sum_{j\in \Z} A^+_j \ e^{\sqrt{j^2+\,j}\, G}\, g^j\quad\mbox{ on }\om_+\\[5mm]
\ds\sum_{j\in \Z} A^-_j \ e^{-\,\sqrt{j^2+\,j}\, G}\, g^j\quad\mbox{ on }\om_-
\end{array}\rg.
\end{array}
\ee
where respectively
\be
\label{r-II.44}
A^+_j:=(2\pi\,i)^{-1}\,\int_{\p\om_+} u(x) \,g^{-j-1}\, dg\  \quad\mbox{ and }\quad A^-_j:=-(2\pi\,i)^{-1}\ 
\int_{\p\om_-} u(x) \, g^{-j-1}\, dg\ \ .
\ee
Hence this gives in particular
\be
\label{r-II.45}
A^+_j-A^-_j:=(2\pi\,i)^{-1}\,\int_{\p D^2} u(x) \,g^{-j-1}\, dg\ .
\ee
In particular, because of oour assumptions we have
\be
\label{r-II.46}
A^+_{-1}-A^-_{-1}=0\ .
\ee
Let $A_\psi:=A^+_{-1}=A^-_{-1}$. We have that
\be
\label{r-II.47}
\ti{u}:=u- A_\psi\, g^{-1}
\ee
is a $W^{1,2}(D^2)$ solution to
\[
\lf\{
\begin{array}{l}
\ds\Delta \ti{u}+i\,\nabla^\perp G\cdot \nabla \ti{u}=0\quad\mbox{ in }D^2\\[5mm]
\ds \ti{u}=\psi-A_\psi\,g^{-1}\quad\mbox{ on }\p D^2
 \end{array}
 \rg.
\]
We deduce in particular that for any $\la\in {\C}$
\[
{\mathfrak L}(\psi-A_\psi\,g^{-1}+\la\psi)=(2\pi\la,2\pi\la)
\]
This implies that the rank of ${\mathfrak L}$ on $\mathfrak D$ is 1.

\appendix
\addcontentsline{toc}{section}{Appendices}
\section*{Appendix}

\section{Optimal Wente Type Inequality}
\reset
\begin{Lma}
\label{lma-1}
Let $(d_l)_{l=1\cdots N}$ such that $d_i=\pm1$ and $\sum_{l=1}^N d_i=0$. Let $p_l$ be $N$ distinct points in $D^2$ and let $G$ be the Green's function solution to 
\be
\label{A-1}
\lf\{
\begin{array}{rl}
\ds\Delta G&\ds=2\pi\ \sum_{l=1}^N\ d_l\, \delta_{p_l}\quad\mbox{in }{\mathcal D}'(D^2)\\[5mm]
\ds G&\ds=0\quad\mbox{ on }\p D^2\ .
\end{array}
\rg.
\ee
Assume that for ${\mathcal H}^1-$almost every $t\in{\R}$ each of the connected components $\om_j(t)$ of $\{ G(x)\le t\}$  satisfy
\be
\label{A-2}
\int_{\p \om_j(t)}\frac{\p G}{\p \nu}=2\pi\ ,
\ee
then for any $a$ and $b$ in $W^{1,2}(D^2,{\R})$ there holds
\be
\label{A-3}
\lf|\sum_{i=1}^N\,d_i\,\varphi(p_i)\rg|\le \frac{1}{4\pi}\int_{D^2}|\nabla a|^2+|\nabla b|^2\ dx^2
\ee
where
\be
\label{A-4}
\lf\{
\begin{array}{rl}
\ds\Delta \varphi&\ds=\nabla a\cdot\nabla^\perp b= \p_{x_2}a\,\p_{x_1} b-\p_{x_1}a\,\p_{x_2}b\quad\mbox{in }{\mathcal D}'(D^2)\\[5mm]
\ds \varphi&\ds=0\quad\mbox{ on }\quad\p D^2.
\end{array}
\rg.
\ee
Moreover equality holds in (\ref{A-3}) only if $(a,b)$ are constants.
\end{Lma}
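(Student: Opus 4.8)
The plan is to recast the left-hand side of \eqref{A-3} as a single weighted integral and to prove the equivalent sharp bound
\[
\left|\int_{D^2} G\,\nabla a\cdot\nabla^\perp b\ dx^2\right|\le\frac12\int_{D^2}|\nabla a|^2+|\nabla b|^2\ dx^2 .
\]
First I would integrate by parts via Green's identity: since $\varphi$ and $G$ both vanish on $\partial D^2$, \eqref{A-4} and \eqref{A-1} give $\int_{D^2}G\,\Delta\varphi=\int_{D^2}\varphi\,\Delta G=2\pi\sum_i d_i\,\varphi(p_i)$, while $\Delta\varphi=\nabla a\cdot\nabla^\perp b$; hence $\sum_i d_i\,\varphi(p_i)=\tfrac1{2\pi}\int_{D^2}G\,\nabla a\cdot\nabla^\perp b\ dx^2$, so that the factor $\tfrac1{4\pi}$ in \eqref{A-3} corresponds exactly to the constant $\tfrac12$ above. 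Writing the integrand as the $2$-form $db\wedge da$ and integrating by parts once more (the boundary term dropping because $G=0$ on $\partial D^2$), I would rewrite the quantity as $-\int_{D^2}b\,dG\wedge da$.

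Next I would exploit the level-set structure of $G$. Away from $p_1,\dots,p_N$ the function $G$ is harmonic, so its critical points are isolated and $\mathcal{H}^1$-almost every $t$ is a regular value; for such $t$ the level set $\{G=t\}$ is a finite union of disjoint smooth closed curves $\gamma_j(t)=\partial\om_j(t)$, each of which, by \eqref{A-2}, carries flux $\oint_{\gamma_j(t)}|\nabla G|\,d\mathcal{H}^1=2\pi$. Applying the coarea formula to the foliation by these curves turns the bulk integral into
\[
\int_{D^2}b\,dG\wedge da=\int_{\mathbb R}\Big(\sum_j\oint_{\gamma_j(t)} b\,da\Big)\,dt ,
\]
while, again by coarea, $\int_{D^2}|\nabla a|^2+|\nabla b|^2\ dx^2=\int_{\mathbb R}\int_{\{G=t\}}\tfrac{|\nabla a|^2+|\nabla b|^2}{|\nabla G|}\,d\mathcal{H}^1\,dt$. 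Bounding the full gradients by their tangential parts along the curves then reduces everything to a single estimate on one level curve.

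The heart of the argument is that sharp estimate on a fixed curve $\gamma=\gamma_j(t)$, and here assumption \eqref{A-2} enters decisively. Along $\gamma$ I would introduce the harmonic-conjugate parameter $\sigma$ defined by $d\sigma=|\nabla G|\,d\mathcal{H}^1$; by \eqref{A-2} its total variation around $\gamma$ equals $2\pi$, so $\sigma$ runs over $\mathbb R/2\pi\mathbb Z$. Since $\partial_\tau=|\nabla G|\,\partial_\sigma$, one has $\oint_\gamma b\,da=\int_0^{2\pi}b\,\partial_\sigma a\,d\sigma$ and the weighted tangential energy becomes $\int_0^{2\pi}|\partial_\sigma a|^2+|\partial_\sigma b|^2\,d\sigma$. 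Expanding $a,b$ in Fourier series in $\sigma$ and using that $|k|\le k^2$ for every integer $k$, with equality only for $|k|=1$, yields
\[
\left|\int_0^{2\pi} b\,\partial_\sigma a\,d\sigma\right|\le \frac12\int_0^{2\pi}|\partial_\sigma a|^2+|\partial_\sigma b|^2\,d\sigma ,
\]
and integrating this over $t$ and summing over $j$ gives the bound. The main obstacle, and the reason the optimal constant comes out correctly, is precisely this reduction: the flux normalization \eqref{A-2} is what fixes the period to $2\pi$, so that the spectral gap $|k|\le k^2$ is saturated exactly at the first harmonic; without it the constant would depend on the (varying) lengths of the level curves.

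Finally, for the equality statement I would trace back the inequalities. Equality in \eqref{A-3} forces, for $\mathcal{H}^1$-a.e.\ $t$ and every $j$, that $\nabla a$ and $\nabla b$ be tangent to $\{G=t\}$ (so $\partial_\nu a=\partial_\nu b=0$) and that $a,b$ be pure first harmonics in $\sigma$ on $\gamma_j(t)$. The first condition means $a$ and $b$ are constant along the gradient flow of $G$, i.e.\ functions of $\sigma$ alone; combined with the second, their oscillating parts carry a fixed positive $\sigma$-energy on every level curve. Since any component of $\{G\ne0\}$ carrying a singularity is swept out by an infinite range of values $t$ (because $G$ is logarithmic near $p_l$), a nonzero first-harmonic part would make $\int_{\mathbb R}\int_0^{2\pi}|\partial_\sigma a|^2+|\partial_\sigma b|^2\,d\sigma\,dt$ diverge, contradicting $a,b\in W^{1,2}$. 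Hence the first harmonics vanish and $a,b$ reduce to constants, as claimed.
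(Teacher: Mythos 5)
Your argument is correct, and it shares the paper's overall skeleton --- reduce $2\pi\sum_id_i\varphi(p_i)=\int_{D^2}G\,\nabla a\cdot\nabla^\perp b\,dx^2$ to the level curves of $G$ by the coarea formula, then let the flux normalization \eqref{A-2} produce the sharp constant curve by curve --- but your key per-curve estimate is obtained by a genuinely different route. The paper (following Topping \cite{Top}) bounds the enclosed signed area $\int_{\om_j(t)}\det(\nabla v)$, $v=(a,b)$, by the isoperimetric inequality $\frac{1}{4\pi}\big(\oint_{\p\om_j(t)}|\p_\tau v|\,dl\big)^2$ and then applies Cauchy--Schwarz with weight $|\nabla G|$, using $\oint_{\p\om_j(t)}|\nabla G|\,dl=2\pi$. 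You instead reparametrize each level curve by the conjugate parameter $\sigma$ with $d\sigma=|\nabla G|\,dl$ (of total mass $2\pi$ precisely by \eqref{A-2}) and prove $\big|\oint b\,da\big|\le\frac12\int_0^{2\pi}\big(|\p_\sigma a|^2+|\p_\sigma b|^2\big)\,d\sigma$ by Fourier expansion and the elementary inequality $|k|\le k^2$. This merges the isoperimetric inequality (in its Hurwitz--Wirtinger form) and the Cauchy--Schwarz step into a single spectral estimate, and it buys a transparent sharpness mechanism: the constant is saturated exactly on the first harmonics, which is what makes your equality discussion natural. The paper's route stays closer to the classical isoperimetric statement; yours makes the spectral origin of the optimal constant visible.

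Two caveats. First, like the paper (which assumes $a,b$ smooth and delegates the justification to \cite{Top}), you do not address the approximation of general $W^{1,2}$ data nor the justification of the integration by parts and slicing identities in the presence of the logarithmic singularities of $G$; this is needed both for the inequality and, more delicately, for the equality case. Second, your equality argument tacitly assumes that on each connected component of $\{G\ne0\}$ the conjugate coordinate $\sigma$ is globally defined and the Fourier coefficients of $a,b$ are independent of $t$ --- i.e.\ that each component contains exactly one singularity and no critical point of $G$, so that it is a single annular region swept by an infinite range of $t$. If $G$ had a saddle at some level $c\ne 0$, your divergence argument would say nothing about the intermediate regions, which are swept by only a finite range of $t$ and could a priori carry nonzero first harmonics with finite energy. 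This structural fact does follow from \eqref{A-2}: at a saddle, two level curves each of enclosed degree $\pm1$ would merge into a single curve of enclosed degree $n_1+n_2\in\{-2,0,2\}$, contradicting the flux-$2\pi$ condition; but this step should be stated explicitly. (The paper's own equality treatment is comparably terse: it implicitly reduces to the one-singularity disc model, where ``$\p_r v=0$ and $\p_\theta v$ constant'' force $\nabla v\equiv0$ by the same logarithmic divergence of energy at the puncture that you invoke.)
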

\noindent{\bf Proof of lemma~\ref{lma-1}.} We follow closely \cite{Top}. We first assume that $a$ and $b$ are smooth. Let $v:=(a,b)$. This gives in particular
\be
\label{A-5}
\mbox{det}(\nabla v)=\p_{x_1}a\,\p_{x_2}b-\p_{x_2}a\,\p_{x_1} b\ .
\ee
Hence, using the co-area formula we have
\be
\label{A-6}
\begin{array}{l}
\ds-\,2\pi\, \sum_{i=1}^N\,d_i\,\varphi(p_i)=-\,\int_{D^2} G\, \Delta\varphi\ dx^2=\int_{D^2} G\, \mbox{det}(\nabla v)\ dx^2\\[5mm]
\ds=\int_{-\infty}^{+\infty}t\ dt\,\int\, \frac{\mbox{det}(\nabla v)}{|\nabla G|}\ d{\mathcal H}^1\res G^{-1}(\{t\})\ .
\end{array}
\ee
Observe that
\be
\label{A-7}
\begin{array}{l}
\ds\int\, \frac{\mbox{det}(\nabla v)}{|\nabla G|}\ d{\mathcal H}^1\res G^{-1}(\{t\})=\frac{d}{dt}\int_{\{G(x)\le t\}}\mbox{det}(\nabla v)\ dx^2\\[5mm]
\ds=\frac{d}{dt}\int_{\{G(x)= t\}} a\,\p_\tau b\, dl
\end{array}
\ee
where $\tau$ is the oriented unit tangent along $\{G(x)= t\}$ and $dl$ the associated length element. The integration by part in (\ref{A-6}) (easily justified - see \cite{Top}) is giving
\be
\label{A-8}
\begin{array}{l}
\ds 2\pi\, \sum_{i=1}^N\,d_i\,\varphi(p_i)=\int_{-\infty}^{+\infty}\ dt \int_{\{G(x)\le t\}} \mbox{det}(\nabla v)\ dx^2
\end{array}
\ee
We can assume that $t$ is a regular value for $G$ and we decompose the upper level sets $\{ G(x)\le t\}$ into it's connected components $(\om_j(t))_{j\in J}$. The isoperimetric inequality
(see \cite{Top}) gives
\be
\label{A-9}
\lf|\int_{\{G(x)\le t\}} \mbox{det}(\nabla v)\ dx^2\rg|\le\sum_{j\in J} \lf|\int_{\om_j(t)}\  \mbox{det}(\nabla v)\ dx^2\rg|\le\frac{1}{4\pi}\, \sum_{j\in J}\ \lf|\int_{\p \om_j(t)}|\p_\tau v|\ dl\rg|^2
\ee
Using Cauchy Schwartz we have
\be
\label{A-10}
\lf|\int_{\p \om_j(t)}|\p_\tau v|\ dl\rg|^2\le \int_{\p \om_j(t)}|\nabla G|\ dl\ \int_{\p \om_j(t)}\frac{|\p_\tau v|^2}{|\nabla G|}\ dl\
\ee
By assumption 
\be
\label{A-11}
\int_{\p \om_j(t)}|\nabla G|\ dl=\int_{\p \om_j(t)}\frac{\p G}{\p\nu}\ dl=2\pi\ .
\ee
Hence we deduce
\be
\label{A-12}
\lf|\int_{\{G(x)\le t\}} \mbox{det}(\nabla v)\ dx^2\rg|\le\frac{1}{2}\int_{\p \om_j(t)}\frac{|\p_\tau v|^2}{|\nabla G|}\ dl
\ee
Integrating over $t$ and combining with (\ref{A-8}) gives (\ref{A-3}). Equality holds if $\om_j$ are discs hence there is exactly one singularity $p_j=0$ and if $\p_\theta v$ is constant as well as  $\p_r v=0$. This implies $\nabla v=0$.\hfill $\Box$

\section{A Uniqueness Result for Critical Points of $L_1$}
\reset

\begin{Lma}
\label{lma-unique}
Let $(d_l)_{l=1\cdots N}$ such that $d_l=\pm1$ and $\sum_{l=1}^N d_l=0$. Let $p_i$ be $N$ distinct points in $D^2$ and let $G$ be the Green's function solution to 
\be
\label{B.1}
\lf\{
\begin{array}{rl}
\ds\Delta G&\ds=2\pi\ \sum_{l=1}^N\ d_l\, \delta_{p_l}\quad\mbox{in }{\mathcal D}'(D^2)\\[5mm]
\ds G&\ds=0\quad\mbox{ on }\quad\p D^2\ .
\end{array}
\rg.
\ee
Assume that for ${\mathcal H}^1-$almost every $t\in{\R}$ each of the connected components $\om_i(t)$ of $\{ G(x)\le t\}$  satisfy
\be
\label{B.2}
\int_{\p \om_j(t)}\frac{\p G}{\p \nu}=2\pi\ ,
\ee
and $\nabla G\ne 0$ away from $G^{-1}(\{0\})$ in $D^2\setminus \{p_1\cdots p_N\}$. Let $u\in W^{1,(2,\infty)}_0(D^2,{\C})$ be a solution of 
\be
\label{B.3}
\lf\{
\begin{array}{l}
\ds-\Delta u\ds= i\, \nabla^\perp G\cdot \nabla u\quad\mbox{ in }{\mathcal D}'(D^2)\\[5mm]
\ds u\ds=0\quad\mbox{ on }\p D^2\ .
\end{array}
\rg.
\ee
Assume that on each connected component $\om$ of $G\ne 0$ there exists $\ti{u}\in W^{1,2}(\om,{\C})$ and $A^\om\in {\C}$ such that
\[
u=\ti{u}+ A^\om\,g^{-1} \quad\mbox{ in }{\mathcal D}'(\om)
\]
Then $u \equiv 0$ on $D^2$.\hfill $\Box$
\end{Lma}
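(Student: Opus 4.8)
The plan is to rewrite (\ref{B.3}) as $\operatorname{div}(g\,\nabla u)=0$ in $\mathcal D'(D^2)$: indeed $\ov g\,\nabla g=i\,\nabla^\perp G$ gives $\nabla g=g\,i\,\nabla^\perp G$, so $\operatorname{div}(g\,\nabla u)=g\,(\Delta u+i\,\nabla^\perp G\cdot\nabla u)$. The same identity shows that $g^{-1}$ itself solves $\operatorname{div}(g\,\nabla g^{-1})=0$; by Lemma~\ref{lm-exp} it is exactly the borderline mode $j=-1$, the only Fourier mode with infinite Dirichlet energy. Writing $u=\ti u_\om+A^\om g^{-1}$ on each component $\om$ of $\{G\ne0\}$ (with $A^\om:=A^\om_{-1}$), I claim the whole statement reduces to proving $A^\om=0$ for every $\om$: once the $g^{-1}$ parts are gone, the pieces $\ti u_\om\in W^{1,2}(\om)$ have matching traces across the measure-zero interfaces and glue into a genuine $u\in W^{1,2}_0(D^2,\C)$.

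For such a glued $u\in W^{1,2}_0$ the conclusion is then immediate. The functional $L_1$ is the quadratic form $2L_1(w)=\int_{D^2}|\nabla w|^2+\int_{D^2}G\,\langle i\nabla w,\nabla^\perp w\rangle$, and the optimal Wente inequality (Lemma~\ref{lma-1}, applied to the real and imaginary parts of $w$) gives $2L_1(w)\ge0$ on $W^{1,2}_0$, with equality only for constants. Since $u$ is a critical point of this quadratic with vanishing trace, testing its Euler--Lagrange equation against $u$ itself yields $2L_1(u)=0$, so equality holds in (\ref{A-3}) and $u$ is constant, hence $u\equiv0$. An equivalent, even shorter endgame avoids Wente: if one shows directly that each regular part $\ti u_\om\equiv0$, then $u=A^\om g^{-1}$ on $\om$, continuity of $u$ across $\{G=0\}$ (where $g^{-1}\ne0$) forces all $A^\om$ equal to one constant $A$, and $u=A\,g^{-1}=0$ on $\p D^2$ forces $A=0$.

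The hard part is therefore killing the $g^{-1}$ modes, and here I would combine three ingredients. (i) A global boundary relation: from $A^\om=\tfrac{1}{2\pi i}\int_{\p\om}u\,dg$ and the opposite orientations of shared interfaces, summation gives $\sum_\om\mathrm{sgn}(\om)A^\om=\tfrac{1}{2\pi i}\int_{\p D^2}u\,dg=0$, where the hypothesis $\sum_l d_l=0$ makes the winding of $g$ along $\p D^2$ vanish. (ii) A winding identity on each component: since $g$ winds $d_{l(\om)}=\pm1$ times along $\p\om$, one has $\int_{\p\om}\ti u_\om\,dg=\int_{\p\om}u\,dg-A^\om\int_{\p\om}g^{-1}\,dg=0$, so each regular part satisfies exactly the one-point compatibility condition (\ref{I.000}). (iii) A localized energy identity: testing $\operatorname{div}(g\,\nabla u)=0$ against $\ov u$ on $D^2\setminus\bigcup_l B_\e(p_l)$ gives $2L_1^{D^2\setminus\cup B_\e}(u)=\int_{\{G=0\}}\mathrm{Re}(\ov u\,\p_\nu u)+\sum_l\Phi_l(\e)$, because $G=0$ on the interfaces and $u=G=0$ on $\p D^2$; the interface terms cancel in pairs by the $C^1$ transmission of $u$ across $\{G=0\}$.

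Putting these together, I would run an induction over the adjacency tree of the components, in the spirit of the final paragraph of the proof of Theorem~\ref{th-II.1}: using the zero data on $\p D^2$, the winding identity (ii), and the one-point vanishing Lemma~\ref{lm:main-lemma-disk}, I would force $\ti u_\om\equiv0$ and $A^\om=0$ on an outer component and then propagate the resulting zero trace across each interface $\{G=0\}$ to its neighbours. The main obstacle I anticipate is precisely this propagation: the $g^{-1}$ modes sit exactly at the optimal Wente constant, so $L_1$ is only degenerately convex and these modes carry infinite Dirichlet energy, while each component inherits zero Dirichlet data only on its $\p D^2$ portion and not on all of $\p\om$. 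Making the transmission conditions on $\{G=0\}$ together with the global relation (i) both seed and close the induction, while keeping the borderline mode separated from the genuinely $W^{1,2}$ modes, is the delicate point, and this is exactly where admissibility of $G$ and $\sum_l d_l=0$ must be used.
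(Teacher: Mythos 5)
Your overall reduction (kill the $g^{-1}$ modes, then run the optimal Wente equality case on the glued $W^{1,2}_0$ function) is sound, and your identities (i) and (ii) are true; but the proof is not complete, because the step you yourself flag as ``the delicate point'' --- forcing $A^\om=0$ and $\ti u_\om\equiv 0$ --- is exactly the content of the lemma, and the induction you propose cannot get started. Your seed step would need a component $\om_0$ on which $u$ has vanishing trace on \emph{all} of $\p\om_0$: only then does the Fourier representation of Lemma~\ref{lm-exp} (trace $\sum_j A_j g^j=0$ on the full circle $\p\om_0$, with the $g^j$ orthogonal there) kill every coefficient. But no component has this property: $u$ is known to vanish only on $\p\om_0\cap\p D^2$, while the remaining part of $\p\om_0$ consists of interior interfaces where nothing is known, and vanishing of $\sum_j A_j g^j$ on an arc does not force the coefficients to vanish (an $L^2$ function on a circle can vanish on an arc without vanishing identically); no unique-continuation-from-an-arc statement for this degenerate equation is provided or available in the paper. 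Note also that Lemma~\ref{lm:main-lemma-disk} is the wrong tool for this step: it concerns \emph{sequences} of solutions at parameters $t_k\nearrow 1$ with traces tending to $0$ in $H^{1/2}$, not a single solution of the $t=1$ equation; for $t=1$ the relevant statement is the Fourier expansion just described, which needs the whole boundary circle. Finally, the ``$C^1$ transmission of $u$ across $\{G=0\}$'' invoked in your ingredient (iii) and in the propagation is established nowhere: Lemma~\ref{lm-regularity} gives interior regularity of $\ti u$ inside each component, not matching of Cauchy data across the free interface for a $W^{1,(2,\infty)}$ solution.

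The paper closes precisely this gap by a global mechanism that never treats components sequentially and needs no information on the interfaces. One multiplies $-\Delta\ti u=i\,\nabla^\perp G\cdot\nabla\ti u$ by $u$ on \emph{every} component simultaneously and integrates by parts; since $\p_{\nu}g=0$ on $\{G=0\}$, each interface term $\int_{\p\om_i\cap\p\om_j}u\cdot\p_{\nu_i}\ti u$ equals $\int_{\p\om_i\cap\p\om_j}u\cdot\p_{\nu_i}u$, and summing over all components these add up to $\int_{D^2}\operatorname{div}\lf(u\cdot\nabla u\rg)dx^2=0$, using only $u=0$ on $\p D^2$ --- no transmission condition and no seeding. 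Fourier orthogonality on each component (the cross terms between $g^{-1}$ and the genuinely $W^{1,2}$ modes drop out of both the Dirichlet and the $G$-weighted terms) turns the identity into $\sum_i\bigl[\int_{\om_i}|\nabla\ti u|^2\,dx^2+\int_{\om_i}G\,\lf<i\,\nabla^\perp\ti u,\nabla\ti u\rg>dx^2\bigr]=0$, and the optimal Wente inequality (Lemma~\ref{lma-1}) makes each bracket nonnegative, hence zero; its equality case gives $\ti u$ constant on each component, so $u$ is a multiple of $g^{-1}$ there, and the zero boundary condition then forces $u\equiv0$. If you want to rescue your write-up, replace the induction over the adjacency tree by this simultaneous integration by parts and global cancellation; that is the missing idea.
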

\noindent{\bf Proof of lemma~\ref{lma-unique}} Observe that on any $\om_i$   $\ti{u}$ satisfies
\be
\label{B.4}
-\Delta \ti{u}= i\, \nabla^\perp G\cdot \nabla \ti{u}\quad\mbox{ in }{\mathcal D}'(\om_i)
\ee
Multiplying by $u$ itself and integrating by parts\footnote{This multiplication is justified by the fact that $\ti{u}\in C^{1,\sqrt{2}-1}(\om_i)$ thanks to lemma~\ref{lm-regularity}.}, using the fact that $G=0$ on $\p\om_i$ and that $u=0$ on $\p\om_i\cap\p D^2$, is giving
\be
\label{B.5}
\int_{\om_i}\nabla u\cdot\nabla\ti{u}\ dx^2-\sum_{j}\int_{\p \om_i\cap\p \om_j} u\cdot\frac{\p \ti{u}}{\p\nu_i}\ dl=-\,\int_{\om_i} G \lf<i\,\nabla^\perp u\cdot\nabla \ti{u}\rg>\ dx^2
\ee
where $\nu_i$ is the unit normal to $\p\om_i$ pointing outside $\om_i$. Observe that on each $\om_i$, after applying the Riemann Mapping and Fourier decompositions there holds
\be
\label{B.6}
\int_{\om_i}\nabla u\cdot\nabla\ti{u}\ dx^2=\int_{\om_i}\nabla \ti{u}\cdot\nabla\ti{u}\ dx^2\quad\mbox{ and }\quad \int_{\om_i} G \lf<i\,\nabla^\perp u\cdot\nabla \ti{u}\rg>\ dx^2=\int_{\om_i} G \lf<i\,\nabla^\perp \ti{u}\cdot\nabla \ti{u}\rg>\ dx^2
\ee
Moreover, since $\p_{\nu_i}g=0$ on $\p\om_i$ there holds
\be
\label{B.7}
\sum_{j}\int_{\p \om_i\cap\p \om_j} u\cdot\frac{\p \ti{u}}{\p\nu_i}\ dl=\sum_{j}\int_{\p \om_i\cap\p \om_j} u\cdot\frac{\p {u}}{\p\nu_i}\ dl\ .
\ee
Using the fact that
\be
\label{B.8}
\sum_{j}\int_{\p \om_i\cap\p \om_j} u\cdot\frac{\p {u}}{\p\nu_i}\ dl=\int_{\om_i}\operatorname{div}\lf(u\cdot\nabla u\rg)\ dx^2\ ,
\ee
by summing over $i$ we obtain
\be
\label{B.8-a}
\sum_i\sum_{j}\int_{\p \om_i\cap\p \om_j} u\cdot\frac{\p {u}}{\p\nu_i}\ dl=\sum_i\int_{\om_i}\operatorname{div}\lf(u\cdot\nabla u\rg)\ dx^2=\int_{D^2}\operatorname{div}\lf(u\cdot\nabla u\rg)\ dx^2=0\ .
\ee
Using the identity (\ref{B.6}) we finally get
\be
\label{B.9}
\sum_{i}\int_{\om_i}\nabla \ti{u}\cdot\nabla\ti{u}\ dx^2+\int_{\om_i} G \lf<i\,\nabla^\perp \ti{u}\cdot\nabla \ti{u}\rg>\ dx^2=0\ .
\ee 
Thanks to the optimal Wente inequality we have for each $i$
\be
\label{B.10}
\int_{\om_i}\nabla \ti{u}\cdot\nabla\ti{u}\ dx^2+\int_{\om_i} G \lf<i\,\nabla^\perp \ti{u}\cdot\nabla \ti{u}\rg>\ dx^2\ge 0\ .
\ee
Hence we deduce that for any $i$ there holds
\be
\label{B.11}
\int_{\om_i}\nabla \ti{u}\cdot\nabla\ti{u}\ dx^2+\int_{\om_i} G \lf<i\,\nabla^\perp \ti{u}\cdot\nabla \ti{u}\rg>\ dx^2= 0\ .
\ee
The case of equality in the Wente inequality implies $\ti{u}=0$ and $u$ is a multiple of $g$. The map $g$ cannot be identically equal to zero on $\p D^2$ hence we have obtained that $u=0$ and the lemma is proved.\hfill $\Box$

\section{$L^\infty$ estimates}
In this appendix we show that under the assumptions of Lemma \ref{lm:main-lemma-disk} (with an additional control of the trace in $L^\infty)$ also the $L^\infty$ norm of the sequence tends to zero (up to subsequences). This result is not needed for the proof of our main theorems, but might be useful for future developments.
\begin{Lma}
\label{lem: vanishing-W12-norm}
    Let $\{v_k\}_{k\in \mathbb{N}}$ be a sequence of functions in $W^{1,2}(D^2)$, $\{t_k\}_{k\in \mathbb{N}}$ a sequence in $(0,1)$ such that $\lim_{k\to\infty}t_k=1$.
    Assume that
    \begin{enumerate}
        \item $-\Delta v_k=it_k\nabla^\perp G\nabla v_k$ in $D^2$ $\forall k\in \mathbb{N}$,
        \item $\lVert \nabla v_k\rVert_{L^2(D^2)}<C$ for any $k\in \mathbb{N}$.
        \item $v_k\vert_{\partial D^2}\to 0$ in $H^\frac{1}{2}\cap L^\infty(\partial D^2)$.
    \end{enumerate}
    Then, up to subsequences,
    \begin{align}
        \lim_{k\to\infty}\lVert  v_k\rVert_{L^\infty(D^2)}=0.
    \end{align}
\end{Lma}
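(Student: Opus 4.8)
The plan is to separate the energy from the pointwise size of the sequence and then to estimate $v_k$ at two different scales around each cone point.

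\emph{Reduction.} Since the traces tend to $0$ in $H^{1/2}(\partial D^2)$, the hypotheses are exactly those of Lemma~\ref{lm:main-lemma-disk} (or of its multi-point version contained in the proof of Theorem~\ref{th-II.1}); hence $\lVert\nabla v_k\rVert_{L^2(D^2)}\to0$, so that $v_k\to0$ strongly in $W^{1,2}$ and in every $L^q$, $q<\infty$. The whole content of the statement is therefore to upgrade this convergence to $L^\infty$. Using the conformal invariance recalled in the preliminaries it is enough to estimate $v_k$ on one connected component $\om$ of $\{G\neq0\}$ at a time; composing with the Riemann map sending $\om$ to $D^2$ and the cone point to $0$ we may assume $\om=D^2$, $G=\log r$ and $g=e^{i\theta}$. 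By Lemma~\ref{lm-exp} (applied with $t=t_k<1$, so that only the branch bounded at the origin occurs) we have $v_k(r,\theta)=\sum_{j\in\Z}A_j^k\,r^{\alpha_j}e^{ij\theta}$ with $\alpha_j:=\sqrt{j^2+t_kj}\geq0$.

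\emph{Two elementary estimates.} First, $A_j^k$ is the $j$-th Fourier coefficient of the boundary trace $v_k(1,\cdot)$, so $|A_j^k|\le\lVert v_k\rVert_{L^\infty(\partial D^2)}=:M_k\to0$ \emph{uniformly in} $j$. Second, a direct computation gives $\lVert\nabla v_k\rVert_{L^2(D^2)}^2=\pi\sum_{j\neq0}\frac{\alpha_j^2+j^2}{\alpha_j}|A_j^k|^2\ge\pi\sum_{j\neq0}\alpha_j|A_j^k|^2$, whence $\sum_{j\neq0}\alpha_j|A_j^k|^2\le\pi^{-1}\varepsilon_k^2$ with $\varepsilon_k:=\lVert\nabla v_k\rVert_{L^2}\to0$. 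The only small exponents are $\alpha_0=0$ and $\alpha_{-1}=\sqrt{1-t_k}$; for every $j\neq0,-1$ one has $\alpha_j\ge\sqrt{|j|}\ge1$.

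\emph{Interior estimate (near the cone point).} Fix $\delta\in(0,1)$. On $D^2_{1-\delta}$ we isolate the two special modes, $|v_k|\le|A_0^k|+|A_{-1}^k|+\sum_{j\neq0,-1}|A_j^k|r^{\alpha_j}\le2M_k+\sum_{j\neq0,-1}|A_j^k|r^{\alpha_j}$, and apply Cauchy--Schwarz against the weight $\alpha_j$: for $r\le1-\delta$ one has $\sum_{j\neq0,-1}|A_j^k|r^{\alpha_j}\le\big(\sum_{j\neq0,-1}\alpha_j|A_j^k|^2\big)^{1/2}\big(\sum_{j\neq0,-1}\tfrac{(1-\delta)^{2\alpha_j}}{\alpha_j}\big)^{1/2}\le\pi^{-1/2}\varepsilon_k\,C_\delta^{1/2}$, where $C_\delta:=\sum_{j\neq0}\frac{(1-\delta)^{2\sqrt{|j|}}}{\sqrt{|j|}}<\infty$ thanks to the bound $\alpha_j\ge\sqrt{|j|}$. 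Hence $\lVert v_k\rVert_{L^\infty(D^2_{1-\delta})}\le2M_k+\pi^{-1/2}C_\delta^{1/2}\varepsilon_k\to0$ for each fixed $\delta$.

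\emph{Boundary annulus and conclusion.} On $A_\delta=\{1-\delta<r<1\}$ the function $G=\log r$ is smooth and $\nabla G$ is bounded, so the classical theory applies. Write $v_k=v_k^1+v_k^2$ on $A_\delta$ with $v_k^2$ harmonic and equal to $v_k$ on $\partial A_\delta$, and $v_k^1$ solving $\Delta v_k^1=-t_k\nabla^\perp G\cdot\nabla v_k$ with zero boundary data. The right-hand side is a Jacobian with $\nabla G\in L^2(A_\delta)$, so the $L^\infty$ form of Wente's inequality (\cite{Top}, Lemma~\ref{lma-1}) gives $\lVert v_k^1\rVert_{L^\infty(A_\delta)}\le C\lVert\nabla G\rVert_{L^2(A_\delta)}\lVert\nabla v_k\rVert_{L^2(A_\delta)}\to0$, while the maximum principle together with the interior estimate gives $\lVert v_k^2\rVert_{L^\infty(A_\delta)}\le\max\big(M_k,\lVert v_k\rVert_{L^\infty(\{r=1-\delta\})}\big)\to0$. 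Combining the two regions proves $\lVert v_k\rVert_{L^\infty(D^2)}\to0$ on the model component, and for a general admissible $G$ one propagates the bound across the components of $\{G\neq0\}$ by the same induction as in the proof of Theorem~\ref{th-II.1}, beginning with the components meeting $\partial D^2$. The crux of the argument, and the only genuine obstacle, is that the logarithmic singularity of $G$ simultaneously destroys the global Wente $L^\infty$ estimate and prevents one from deducing an $L^\infty$ bound from the $W^{1,2}$ smallness alone, since there is no embedding $W^{1,2}\hookrightarrow L^\infty$ in dimension two; the resolution is precisely the scale separation above, where near the cone point the growth $\alpha_j\ge\sqrt{|j|}$ of the Fourier exponents turns the vanishing of the energy into a genuine pointwise bound, while near the regular interface $\{G=0\}$ the usual compensated-compactness and maximum-principle arguments take over.
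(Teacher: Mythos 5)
Your overall two--region strategy (explicit Fourier representation near the cone point, Wente plus maximum principle near the boundary) is appealing, and the weighted Cauchy--Schwarz interior estimate is correct; but the proof collapses at the very first step, the ``Reduction''. Lemma \ref{lm:main-lemma-disk} cannot be invoked here: although its statement in the paper omits it, its proof (and every application of it in the paper) relies on the orthogonality condition \eqref{I.000}, namely $\int_{\partial D^2}v_k\,dg=0$, which is exactly what kills the resonant Fourier mode $j=-1$ (this is the step ``Now \eqref{I.000} implies that $Y_k(0)=0$'' in its proof). Under the hypotheses of the present Lemma no such condition is available, and the conclusion $\lVert\nabla v_k\rVert_{L^2(D^2)}\to0$ that you draw is simply false. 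Concretely, on the model component ($G=\log r$, $g=e^{i\theta}$) take
\begin{equation*}
v_k(r,\theta)=(1-t_k)^{1/4}\,r^{\sqrt{1-t_k}}\,e^{-i\theta}\ .
\end{equation*}
This satisfies hypothesis 1 exactly (it is the $j=-1$ mode of Lemma \ref{lm-exp}), its trace $(1-t_k)^{1/4}e^{-i\theta}$ tends to $0$ in $H^{1/2}\cap L^\infty(\partial D^2)$, and yet
\begin{equation*}
\int_{D^2}|\nabla v_k|^2\,dx^2=\pi\,(1-t_k)^{1/2}\,\frac{(1-t_k)+1}{\sqrt{1-t_k}}=\pi\,(2-t_k)\longrightarrow\pi\neq0\ .
\end{equation*}
This is precisely the Ginzburg--Landau-type blow-up mechanism \eqref{0.14}--\eqref{0.15} around which the paper is built: a trace of size $\varepsilon$ in the $g^{-1}$ direction carries energy of order $\varepsilon^2(1-t_k)^{-1/2}$. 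Consequently everything downstream that uses $\varepsilon_k:=\lVert\nabla v_k\rVert_{L^2}\to0$ (the interior bound $\pi^{-1/2}C_\delta^{1/2}\varepsilon_k$ and the Wente bound on the annulus $A_\delta$) is unjustified; note also that the paper's own proof never claims, and could not claim, any such energy vanishing.

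The good news is that your framework contains the repair. Split $v_k=A_0^k+A_{-1}^k\,r^{\sqrt{1-t_k}}e^{-i\theta}+\tilde v_k$. The two exceptional modes are bounded pointwise by $2M_k\to0$, exactly as you observed, while $\tilde v_k$ \emph{does} satisfy $\int_{\partial D^2}\tilde v_k\,dg=0$, so Lemma \ref{lm:main-lemma-disk} legitimately applies to $\tilde v_k$ and yields $\lVert\nabla\tilde v_k\rVert_{L^2(D^2)}\to0$. Your weighted Cauchy--Schwarz estimate (which already excludes $j=0,-1$) and your annulus argument then run on $\tilde v_k$ unchanged, and the discarded $(-1)$-mode contributes to the annulus both an $L^\infty$ norm at most $M_k$ and an energy at most $C_\delta M_k^2$, since $r^{2\sqrt{1-t_k}-2}$ is bounded on $A_\delta$. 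With this modification your proof becomes a correct and genuinely more explicit alternative to the paper's argument, which instead proceeds by weak limits, the equality case of the optimal Wente inequality (both for the weak limit and for the blow-up at concentration scales), and the dyadic/angular-average estimate of Lemma \ref{lem: vanishing-Linfty-norm}. As written, however, the proposal has a genuine gap.
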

\begin{proof}
We extract a subsequence  still denoted $v_k$ such that $v_k$ weakly converges in $W^{1,2}(D^2,{\C})$ to a limit $v_\infty$.

\medskip

\noindent{\bf Case 1 : $\mathbf v_\infty\ne 0$.}  In this case we have by linearity that $v_\infty\in W^{1,2}(D^2,{\C})$ solves

\be
\lf\{
\begin{array}{l}
\ds-\Delta v_\infty\ds= i\, \nabla^\perp G\cdot \nabla v_\infty\quad\mbox{ in }D^2\\[5mm]
\ds v_\infty\ds=0\quad\mbox{ on }\p D^2\ .
\end{array}
\rg.
\ee
Multiplying by $v_\infty$ and integrating by parts gives
\be
\label{int-parts}
 L_1(v_\infty):=\frac{1}{2}\int_{D^2}|\nabla v_\infty|^2\ dx^2+\frac{1}{2}\int_{D^2}G\, \lf<i\,\nabla v_\infty,\nabla^\perp v_\infty\rg>\ dx^2=0
\ee
Let $c_\infty$ be the solution to
\be
\lf\{
\begin{array}{l}
\ds\Delta c_\infty\ds=\lf<i\,\nabla v_\infty,\nabla^\perp v_\infty\rg>\quad\mbox{ in }D^2\\[5mm]
\ds c_\infty=0\quad\mbox{ on }\p D^2\ .
\end{array}
\rg.
\ee
Inserting this identity in \eqref{int-parts} gives
\be
\int_{D^2}|\nabla v_\infty|^2\ dx^2=2\pi\, c_\infty(0)\ .
\ee
This is a case of equality for the $L^\infty$ Wente estimate and thanks to \cite{Top} (see the observation after theorem 1) this implies that $v_\infty=0$ which is a contradiction.\\
\noindent{\bf Case 2 : $\mathbf v_\infty= 0$.} Since for any $k\in \mathbb{N}$ $v_k$ solves \eqref{I.7} with a trace converging to zero in $H^\frac{1}{2}\cap L^\infty(\partial D^2)$ we have, up to a subsequence,
\be
\label{convergence-to-zero}
v_k\rightarrow 0\quad\mbox{ in }(W^{1,2}_{loc}\cap L^\infty_{loc})(\ov{D^2}\setminus \{0\})\cap\bigcap_{l\in \mathbb{N}}C^l_{loc}({D^2}\setminus \{0\})\ .
\ee
We shall divide this case in two sub-cases.\\
\noindent{\bf Case 2.1 : $\mathbf v_\infty= 0$ and
\be
\label{I.22}
\limsup_{k\rightarrow +\infty}\sup_{r>0}\int_{B^2_r(0)\setminus B^2_{r/2}(0)}|\nabla v_k|^2\ dx^2>0\ .
\ee}
We call this case the ``semi-vanishing case''. Hence there exists $\delta>0$ a subsequence that we keep denoting $v_k$ and $r_k\rightarrow 0$ such that
\be
\label{I.23}
\liminf_{k\rightarrow +\infty}\int_{B^2_{r_k}(0)\setminus B_{r_k/2}(0)}|\nabla v_k|^2\ dx^2\ge\delta
\ee
Let $\hat{v}_k(x):=v_k(r_k\, x)$. We have for any $R>0$
\be
\label{I.24}
\int_{B_R(0)}|\nabla \hat{v}_k|^2\ dx^2\le 1
\ee
The sequence $\hat{v}_k$ satisfies for any $R>0$ and $k$ large enough
\be
\label{I.25}
-\Delta \hat{v}_k\ds= i\, t_k\,\nabla^\perp {G}\cdot \nabla \hat{v}_k\quad\mbox{ in }B_R(0)
\ee
By standard elliptic bootstrap argument $\hat{v}_k$
is pre-compact in $C^l_{loc}({\C}\setminus\{0\})$ for any $l\in {\N}$. Because of (\ref{I.23}) we have
\be
\label{I.27}
\int_{B_1(0)\setminus B_{1/2}(0)}|\nabla \hat{v}_k|^2\ dx^2\ge\delta>0\ .
\ee
Hence we can extract a subsequence, still denoted $k$, such that $\hat{v}_k$ converges strongly towards $\hat{v}_\infty\ne 0$ in $C^l_{loc}({\C}\setminus\{0\})$ for any $l\in {\N}$ and weakly in $W^{1,2}({\C},{\C})$.
The map $\hat{v}_\infty$ satisfies
\be
\label{I.28}
\begin{array}{l}
\ds-\Delta \hat{v}_\infty\ds= i\, \nabla^\perp {G}\cdot \nabla \hat{v}_\infty\quad\mbox{ in }{\C}\ .
\end{array}
\ee
Multiplying by $\hat{v}_\infty$ and integrating by parts gives
\be
\label{I.29}
 L_1(\hat{v}_\infty):=\frac{1}{2}\int_{{\C}}|\nabla \hat{v}_\infty|^2\ dx^2+\frac{1}{2}\int_{\C}{G}\, \lf<i\,\nabla \hat{v}_\infty,\nabla^\perp \hat{v}_\infty\rg>\ dx^2=0\ .
\ee
Let $\hat{c}_\infty$ be the $W^{1,2}$ solution to
\be
\label{I.30}
\begin{array}{l}
\ds\Delta \hat{c}_\infty\ds=\lf<i\,\nabla \hat{v}_\infty,\nabla^\perp \hat{v}_\infty\rg>\quad\mbox{ in }{\C}\ ,
\end{array}
\ee
given by
\[
\hat{c}_\infty:=\frac{1}{2\pi}\,\log|x|\star \lf<i\,\nabla \hat{v}_\infty,\nabla^\perp \hat{v}_\infty\rg>\ .
\]
Inserting (\ref{I.30})  in (\ref{I.29}) gives
\be
\label{I.31}
\int_{D^2}|\nabla \hat{v}_\infty|^2\ dx^2=2\pi\, \hat{c}_\infty(0)\ .
\ee
This is a case of equality for the $L^\infty$ Wente estimate and thanks to \cite{Top}  this implies that $\hat{v}_\infty=0$ which is a contradiction.

\medskip 

We are left with the ``totally vanishing case'' which is the most delicate one for ``tracking'' the limiting ``distribution'' of the $L^2$ energy of $\nabla v_k$:\\
\noindent{\bf Case 2.1 : $\mathbf v_\infty= 0$ and
\be
\label{I.32}
\limsup_{k\rightarrow +\infty}\sup_{r>0}\int_{B^2_r(0)\setminus B^2_{r/2}(0)}|\nabla v_k|^2\ dx^2=0\ .
\ee}
In this case, Lemma \ref{lem: vanishing-Linfty-norm} implies that $v_k$ tends to zero uniformly in a ball around the origin (up to a subsequence. Together with \eqref{convergence-to-zero} this proves the Lemma.
\end{proof}

\begin{Lma}
\label{lem: vanishing-Linfty-norm}
    Let $\{v_k\}_{k\in \mathbb{N}}$ be a sequence of functions in $W^{1,2}(D^2)$, $\{t_k\}_{k\in \mathbb{N}}$ a sequence in $(0,1)$ such that $\lim_{k\to\infty}t_k=1$. Let $\rho\in \left(0,\frac{1}{2}\right)$.
    Assume that
    \begin{enumerate}
        \item $-\Delta v_k=it_k\nabla^\perp G\nabla v_k$ in $D^2$ $\forall k\in \mathbb{N}$,
        \item $\lim_{k\to\infty}\lVert v_k\rVert_{C^2(\partial B_\rho(0))}=0$.
        \item $\displaystyle \limsup_{k\rightarrow +\infty}\sup_{0<r<1}\int_{B^2_r(0)\setminus B^2_{r/2}(0)}|\nabla v_k|^2\ dx^2=0$.
    \end{enumerate}
    Then
    \begin{align}
        \lim_{k\to\infty}\lVert v_k\rVert_{L^\infty(B_\rho(0))}=0.
    \end{align}
\end{Lma}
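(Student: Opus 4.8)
Under the running normalization (the singularity placed at the origin with $g=e^{i\theta}$, so that $G=\log|x|$), the coefficient field is $\nabla^\perp G=r^{-1}\vec e_\theta$ and the equation reads $-\Delta v_k=i\,t_k\,r^{-2}\,\p_\theta v_k$ on $B_\rho(0)$. The plan is to decouple the behaviour of $v_k$ into its circular average and its oscillation. The key structural fact is that, for $G=\log|x|$, the average is a \emph{constant} (pinned by the boundary datum), while the equation is \emph{scale invariant}; hypothesis (3) then forces the oscillation to be uniformly small at every scale, and the two facts combine without any telescoping. Concretely, first project the equation onto the zeroth Fourier mode: integrating in $\theta$, the coupling term $\int_0^{2\pi}\p_\theta v_k\,d\theta$ vanishes and $m_k(r):=\frac1{2\pi}\int_0^{2\pi}v_k(r,\theta)\,d\theta$ solves $\ddot m_k+r^{-1}\dot m_k=0$. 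Hence $m_k=a_k+b_k\log r$, and finiteness of the Dirichlet energy near $0$ (recall $v_k\in W^{1,2}$) forces $b_k=0$, so $m_k\equiv a_k$. By assumption (2), $|a_k|=|m_k(\rho)|\le\|v_k\|_{L^\infty(\p B_\rho)}\to0$.

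\emph{Uniform decay of the oscillation.} Fix $r\in(0,\rho)$ and rescale $\hat v_k(y):=v_k(ry)$ on the fixed annulus $\mathcal A:=\{\tfrac12<|y|<2\}$. Since $\nabla^\perp G\cdot\nabla$ is homogeneous of degree $-2$, the rescaled function solves the \emph{same} equation $-\Delta\hat v_k=i\,t_k\,|y|^{-2}\,\p_\theta\hat v_k$ on $\mathcal A$, now with bounded coefficient $|y|^{-2}$. Because $r<\rho<\tfrac12$ we have $2r<1$, so by assumption (3) the quantity $\int_{\mathcal A}|\nabla\hat v_k|^2=\int_{B_{2r}\setminus B_{r/2}}|\nabla v_k|^2$ is bounded by some $\eps_k\to0$ \emph{uniformly in $r$}. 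Writing $\mathcal A'=\{\tfrac34<|y|<\tfrac32\}\Subset\mathcal A$, the interior $W^{2,2}$ estimate for the equation, Poincaré's inequality, and the two-dimensional embedding $W^{2,2}\hookrightarrow C^0$ give
\be
\lf\|\hat v_k-\langle\hat v_k\rangle_{\mathcal A}\rg\|_{C^0(\{|y|=1\})}\le C\,\lf\|\hat v_k-\langle\hat v_k\rangle_{\mathcal A}\rg\|_{W^{2,2}(\mathcal A')}\le C\,\|\nabla \hat v_k\|_{L^2(\mathcal A)}\le C\,\sqrt{\eps_k}.
\ee
Since $m_k\equiv a_k$, the annular mean $\langle\hat v_k\rangle_{\mathcal A}$ also equals $a_k$; unwinding the rescaling this reads $\|v_k(r,\cdot)-a_k\|_{L^\infty(\p B_r)}\le C\sqrt{\eps_k}$, with $C$ and $\eps_k$ independent of $r$. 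Combining with Step~1, for every $r\le\rho$ and $\theta$,
\be
|v_k(r,\theta)|\le|a_k|+\|v_k(r,\cdot)-a_k\|_{L^\infty(\p B_r)}\le|a_k|+C\sqrt{\eps_k}\longrightarrow0,
\ee
which is precisely $\|v_k\|_{L^\infty(B_\rho(0))}\to0$.

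\textbf{Main obstacle.} The delicate point is the uniformity in the scale $r$ of the oscillation estimate: a naive chaining of the oscillations over the infinitely many dyadic annuli accumulating at the origin would diverge, and what saves the argument is that the circular average is the \emph{single} constant $a_k$ at every radius, so each circle $\p B_r$ is compared directly to $a_k$ rather than telescoped. The potentially dangerous Fourier mode is $n=-1$, whose radial exponent $\sqrt{1-t_k}\to0$ makes it nearly scale invariant; its annular energy is, in the limit $t_k\to1$, comparable to $|a^{-1}_k|^2$ uniformly in the scale, so hypothesis (3) forces that coefficient to vanish and the mode causes no trouble.

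\emph{Alternative route.} One can also bypass the rescaling: writing $v_k=\sum_n a^n_k\,r^{\sqrt{n^2+t_k n}}e^{in\theta}$ on $B_\rho$ as in Lemma~\ref{lm-exp}, every radial exponent satisfies $\sqrt{n^2+t_k n}\ge0$ (with equality only for $n=0$, and approached as $n=-1$, $t_k\to1$). Hence each $r\mapsto r^{\sqrt{n^2+t_k n}}$ is nondecreasing on $[0,\rho]$, so $\|v_k\|_{L^\infty(B_\rho)}\le\sum_n|a^n_k|\,\rho^{\sqrt{n^2+t_k n}}$, and the right-hand side is exactly $\sum_n|c^n_k|$ for the Fourier coefficients $c^n_k=a^n_k\rho^{\sqrt{n^2+t_kn}}$ of $v_k|_{\p B_\rho}$, which is controlled by $\|v_k\|_{C^2(\p B_\rho)}\to0$. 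This makes transparent that the non-negativity of all the exponents is the structural reason the interior sup is governed by the boundary.
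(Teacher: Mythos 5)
Your main argument is correct and is essentially the paper's proof: both split $v_k$ into its circular average --- a radial harmonic function, hence the constant $a_k$ once the $\log r$ branch is excluded by $v_k\in W^{1,2}$, with $a_k\to 0$ by hypothesis (2) --- plus an oscillation controlled on dyadic annuli via hypothesis (3). Your dilation to the fixed annulus $\mathcal{A}$, followed by the interior $W^{2,2}$ estimate, Poincar\'e, and the embedding $W^{2,2}(\mathcal{A}')\hookrightarrow C^0$, is the paper's computation in rescaled form: the paper's annular Poincar\'e--Sobolev inequality carries the factor $r$ in front of the Hessian term and invokes $|\nabla G|\le C\,r^{-1}$, which is exactly the scale bookkeeping that your rescaling performs automatically.

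Your closing \emph{alternative route}, however, genuinely differs from the paper and is worth keeping. Since every radial exponent $\sqrt{n^2+t_k n}$ of the $W^{1,2}$ solution on $B_\rho$ is nonnegative, one gets $\|v_k\|_{L^\infty(B_\rho)}\le \sum_n |c^n_k|\le C\,\|v_k\|_{C^2(\partial B_\rho)}$, where $c^n_k$ are the Fourier coefficients of the trace on $\partial B_\rho$; this is more elementary (no elliptic estimates at all) and shows that hypothesis (3) is actually redundant in this setting, where $G=\log|x|$ is radial. What the average/oscillation proof (yours and the paper's) buys instead is robustness: it does not rely on the exact separation of variables, only on $\partial_\theta G=0$ for the average step and $|\nabla G|\le C\,r^{-1}$ for the oscillation step, so it is the version that survives perturbations of $G$ for which no explicit Fourier representation of the solution is available. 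One cosmetic remark: in your ``main obstacle'' paragraph the $n=-1$ coefficient is not forced to \emph{vanish} by hypothesis (3), only to tend to zero; as your alternative route makes clear, it is really hypothesis (2) together with the nonnegativity of the exponent $\sqrt{1-t_k}$ that keeps this mode harmless.
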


\begin{proof}
    Let
\[
\ov{v}_k(r):=\dashint_0^{2\pi} v_k(r,\theta)\ d\theta\ .
\]
Observe that $v_k$ satisfies
\be
\label{I.34}
\begin{array}{rl}
\ds\frac{\p^2 v_k}{\p r^2}+\frac{1}{r}\frac{\p v_k}{\p r}+\frac{1}{r^2}\frac{\p^2v_k}{\p\theta^2}&\ds=t_k\, \frac{i}{r}\ \frac{\p G}{\p r}\,\frac{\p v_k}{\p \theta}-t_k\, \frac{i}{r}\ \frac{\p v_k}{\p r}\,\frac{\p G}{\p \theta}\\[5mm]
\ds &\ds=t_k\, \frac{i}{r}\ \frac{\p}{\p\theta}\lf( v_k\ \frac{\p G}{\p r}\rg)-t_k\, \frac{i}{r}\ \frac{\p}{\p r}\lf( v_k\ \frac{\p G}{\p \theta}\rg)\\[5mm]
\ds &\ds=t_k\, \frac{i}{r}\ \frac{\p}{\p\theta}\lf( v_k\ \frac{\p G}{\p r}\rg)
\end{array}
\ee
Taking the average with respect to $\theta$ gives
\be
\label{I.35}
\begin{array}{rl}
\ds\Delta\ov{v}_k=\frac{\p^2 \ov{v}_k}{\p r^2}+\frac{1}{r}\frac{\p \ov{v}_k}{\p r}=0\end{array}
\ee
Since $\ov{v}_k$ converges to $0$ in $C^2(\partial B_\rho(0))$, we deduce that
\be
\label{I.39}
\limsup_{k\rightarrow +\infty}\int_{B_{\rho}(0)}|\nabla^2 \ov{v}_k|^2+|\nabla \ov{v}_k|^2+|\ov{v}_k|^2\ dx^2=0
\ee
By Sobolev embedding we have in particular that
\be
\label{I.40}
\lim_{k\rightarrow +\infty}\|\ov{v}_k\|_{L^\infty(B_{\rho}(0))}=0\ .
\ee
Now let $k\in \mathbb{N}$. Since $v_k=\overline{v}_k$ has zero average on every annulus centered in zero, by Poincar\'e-Sobolev inequality\footnote{Here we are also making use of the following estimate: let $\varphi$ be a cut-off function supported in $B_2$ and equal to one in $B_1$. Then
\begin{align}
    \int_{B_1}\lvert \operatorname{Hess} v\rvert^2\leq C\int_{B_2}\lvert D(\varphi\nabla v)\rvert^2\leq C\int_{B_2}\lvert d(\varphi d v)\rvert^2+\lvert d^\ast (\varphi d v)\rvert^2\leq C\int_{B_2}\lvert \Delta v\rvert^2+\lvert\nabla v\rvert^2.
\end{align}
} we have for any $r<\rho/2$
\begin{align}
\label{I.41}
\|v_k-\ov{v}_k\|_{L^\infty(B_r(0)\setminus B_{r/2}(0))}= &
\operatorname{osc}_{B_r(0)\setminus B_{r/2}(0))}(v_k-\ov{v}_k)\\ \leq & C\left(r\lVert \operatorname{Hess} v_k\rVert_{L^2(B_r(0)\setminus B_{r/2}(0))}+\lVert \nabla v_k\rVert_{L^2(B_r(0)\setminus B_{r/2}(0))}\right)\\
\nonumber
\leq & C\left(r\lVert\Delta v_k\rVert_{L^2(B_{2r}(0)\setminus B_{r/4}(0)))}+\lVert \nabla v_k\rVert_{L^2(B_r(0)\setminus B_{r/2}(0))}\right)
\end{align}
Hence using the fact that $|\nabla G|\le C\, r^{-1}$ and the equation satisfied by $v_k$ we obtain
\be
\label{I.42}
\sup_{0<r<\rho/2}\|v_k-\ov{v}_k\|^2_{L^\infty(B_r(0)\setminus B_{r/2}(0))}\le C\ \sup_{0<r<\rho/2}\int_{B_{2r}(0)\setminus B_{r/4}(0))}|\nabla v_k|^2\ dx^2\ .
\ee
Using  assumption 3 we deduce that
\be
\label{I.43}
\lim_{k\rightarrow +\infty}\sup_{0<r<\rho/2}\|v_k-\ov{v}_k\|^2_{L^\infty(B_r(0)\setminus B_{r/2}(0))}=0
\ee
Combining (\ref{I.40}) and (\ref{I.43}) gives
\be
\label{I.44}
\lim_{k\rightarrow+\infty}\|v_k\|_{L^\infty(B_\rho)}=0\ .
\ee
This concludes the proof of Lemma~\ref{lem: vanishing-Linfty-norm}.
\end{proof}

\end{document}